\newtheorem{theorem}{Theorem}[section] %按section 标号
\newtheorem{lemma}[theorem]{Lemma}
\newtheorem{proposition}[theorem]{Proposition}
\newtheorem{claim}[theorem]{Claim}
\theoremstyle{definition}
\newtheorem{definition}[theorem]{Definition}
\newtheorem{definition-lemma}[theorem]{Definition-Lemma}
\newtheorem{definition-theorem}[theorem]{Definition-Theorem}
\newtheorem{remark}[theorem]{Remark}
\newcommand{\R}{\mathbb{R}}
\newcommand{\Z}{\mathbb{Z}}
\newcommand{\C}{\emph{C}}
\begin{document}
\title{\textbf{ A Weak $\infty$-Functor in Morse Theory}}
%The Analysis of Morse Moduli Space?}}%the n-category/algebraic str according to the Morse Moduli Space's boundary   or    the algebraic str of the manifold according to the Morse Moduli Space's boundary

\author[1,2]{Shanzhong Sun\thanks{ Email: sunsz@cnu.edu.cn}}
\author[1]{Chenxi Wang\thanks{ Email: wangcxchelsea@outlook.com}}

\renewcommand\Affilfont{\small}

\affil[1]{Department of Mathematics, Capital Normal University, Beijing 100048, P. R. China}
\affil[2]{Academy for Multidisciplinary Studies, Capital Normal University, Beijing 100048, P. R. China}

\date{}

\maketitle

\begin{abstract}
In the spirit of Morse homology initiated by Witten and Floer,  we construct two $\infty$-categories $\mathcal{A}$ and  $\mathcal{B}$. The weak one $\mathcal{A}$ comes out of the Morse-Samle pairs and their higher homotopies, and the strict one $\mathcal{B}$ concerns the chain complexes of the Morse functions. Based on the boundary structures of the compactified moduli space of gradient flow lines of Morse functions with parameters, we build up a weak $\infty$-functor $\mathcal{F}: \mathcal{A}\rightarrow \mathcal{B}$. Higher algebraic structures behind Morse homology are revealed with the perspective of defects in topological quantum field theory.

%weak $n$-category, weak $n$ functor

%\textcolor{red}{Constructing a weak $\infty$-category $\mathcal{A}$ and a strict $\infty$-category $\mathcal{B}$, we establish an $\infty$-functor via the boundary of the compactification of the moduli space of gradient flow lines. This paper reveals more algebraic structure (higher homotopy structure) than Morse homology in the process of establishing the weak $\infty$-functor.  }

\end{abstract}

\setcounter{tocdepth}{2}\tableofcontents

key words: Morse function, %gradient dynamical system,
moduli space of gradient flow lines, compactifications, weak $n/\infty$-category, higher morphism, homotopy

%%%%%%%%%%%%%%%%%%%%%%%%%%%%%%%%%%%%%%%%%%%%%%%%%%%%%%%%%%%%%%%%%%%%%%%%%%%%%%%%%%%%%%%%%%%%%%%%%%%%%%%%%%%%%%%%%%%%%%%%%%%%%%%%%%%%%%%%%%%%%%%%%%%%%%%%%%%%%%%%%%
\begin{section}
{Introduction}
\end{section}
%-------------------------------------------------------- 第一章----------------------------------------------------------

The interplay between analysis and topology on Riemannian manifold has mutual benefits.  Ever since the classical work of Morse, the topology of manifolds is tied closely to the analytic objects the primary of which is arguably Morse function. As the modern developments in the calculus of variations and the infinite dimensional Morse theory show, the topological structures such as nontrivial homologies and homotopies provide valuable information and constraints on the critical points of action functionals which amount to the solutions to ordinary differential equations and partial differential equations. On the other hand, the non-degeneracy of the critical points has far reaching implications about the topology of the manifold. The ideas of varying the level sets already reveal the homotopy type of the manifolds which is enough to determine the manifolds homeomorphically in some cases.  With the help of Riemannian metric the gradient vector fields of Morse functions were introduced to the subject and the role of the transversality conditions among the stable and unstable submanifolds was recognized. With such insights, the cell complex structure of the manifold and the attaching maps get fully understood which resulted in the spectacular achievements such as Bott's periodicity theorem and Smale's proof of higher dimensional Poincar\'{e} conjecture.

The next revolutionary idea is due to Witten (\cite{witten82}) who interpreted the Morse theory as a supersymmetric quantum mechanics with the Morse function playing the role of the superpotential. Morse theory is also related to Hodge theory which is another model of the interaction between analysis and topology, and they are placed in the two sectors of the coupling constants of the same theory. In this picture the critical points and the gradient flow lines correspond to the perturbative ground states and the tunneling effects among them. These data gives the Morse-Smale-Witten (MSW) complex whose homology is the true ground state of the theory.

Inspired by this circle of ideas, the moduli space of gradient flow lines and its compactification play a fundamental role in defining in a mathematically rigorous way the Morse homology which is proved to be isomorphic to the singular homology of the manifold. Later it was generalized to Morse homotopy theory (\cite{fukaya93, betz-cohen}) which gives the information about the rational homotopy type of the manifold.

More precisely, given a Morse function $f$ on a smooth manifold $M$, we construct the MSW complex whose generators are the critical points graded by their Morse indices. The boundary operator is given by counting the gradient flow lines connecting the critical points with index discrepancy $1$. It is highly nontrivial that thus defined boundary operator is really a boundary operator, i.e., square zero. It depends on the meticulous analysis of the moduli space of gradient flow lines. The smoothness, compactification, boundary with corner structure and orientation of moduli space with any index difference call for a full understanding. In fact, in the current paper our point is that the moduli space should play an even fundamental role, and we will use them to get finer algebraic structures.

To understand homology and homotopy structures of topological space with extra (e.g., smooth, algebraic or group action) structures, more and more abstract algebraic tools such as differential graded algebras, operads, higher homotopy algebras and infinity categories were introduced and used. It is interesting to notice that all of these abstract algebraic structures appear in recent developments of quantum physics.

Let's take defect as an example. In a quantum field theory, if there are some lower-dimensional region in spacetime manifold $M$ where special phenomena different from its surroundings happen, such region is called a defect.  Most of time the metric nature of the region is irrelevant, so defect is considered to be topological.  It fits quite well with topological quantum field theory (TQFT). Domain wall in ferromagnets in condensed matter is such a case. It turns out many quantum field theories correspond to various categories, and topological defects correspond to morphisms in categories.  This viewpoint provides a conceptually plausible unifying theme to understand symmetries of a given quantum field theory and dualities between different quantum field theories. Please refer to \cite{kapustin, carqueville} to get a flavor.

Even though these higher algebraic structures were invented first in mathematics and later found their way in quantum field theory, geometry and topology also benefit from the interplay. Many recent mathematical developments are attempts to  clarify the underlying higher structures of physical phenomena like defects. Extended TQFT in the sense of Lurie is a prominent one in which weak $n$-category appears naturally(\cite{lurie08}).

To define the MSW complex using gradient flow lines, other than the Riemannian metric we need one Morse function the non-degenerate critical points of which are the generators of the complex. To address the independence problem of the MSW complex on the Morse function, two Morse functions are required and the moduli spaces of gradient flow lines connecting the critical points of both Morse functions are used in the cobordism like arguments to get well-defined Morse homology theory.

In Morse theory and symplectic geometry, there appears already a higher homotopical algebra structure, namely the $A_\infty$-algebra. For Morse theory it is called the Morse homotopy theory, and in symplectic geometry it is called Fukaya category which can be seen as a version of half infinite dimensional Morse theory.  In Morse homotopy theory, several Morse functions are needed and the moduli spaces of gradient flow lines connecting critical points belonging to various differences of Morse functions are used to build up the higher homotopy algebra structure. Roughly, this strong homotopy algebra is a manifestation of the tree structure behind the moduli space. For the references about Morse homotopy theory please refer to \cite{fukaya93, betz-cohen}.

In \cite{gaiotto-moore-witten}, the authors studied a Landau-Ginzburg (LG) model pertaining to a K\"{a}hler manifold with a holomorphic Morse function and developed a web-based formalism for such theories. An $A_\infty$-category of branes in this model was constructed using only data from the so-called BPS solitons and their interactions, and it corresponds to the Fukaya-Seidel category in symplectic geometry and singularity theory.  They follow and enlarge the vision of Morse theory as supersymmetric quantum mechanics (\cite{witten82}) by going beyond one dimension further.  More interestingly, when varying the theories in a continuous family, at some instance a kind of topological defect happens and it interpolates the theory left and right. Such defects are called interfaces, and an $A_\infty$ $2$-category was constructed to understand these interfaces.

To encode the information about the boundary conditions, i.e., the branes in LG models or string theory, we need $A_\infty$ category. However, to understand the interfaces between such theories, another kind of higher structures is necessary. The above $2$-category structure is a manifestation of such higher structures. That is also our main concern in the current paper to which we turn now.

In fact, this is already hinted in \cite{gaiotto-moore-witten} (Chapter 10, (10.48)).  Let us be more precise. To a Morse-Smale pair $(f, g)$ on a manifold $M$, one associates the Morse-Smale-Witten (MSW) complex $(\C_{\bullet} , \partial)$. Its homology group is called the Morse homology group $MH_\bullet(\C_{\bullet} , \partial)$. To prove that the Morse homology group is independent of the choice of the Morse-Smale pair, we choose another pair $(f', g')$ with corresponding MSW complex $(\C'_{\bullet} , \partial')$ and Morse homology group $MH'_\bullet(\C'_{\bullet} , \partial')$ and define a degree-preserving linear map
$$U: C_\bullet\rightarrow C'_\bullet$$
 which induces an isomorphism $\hat{U}: MH_\bullet\rightarrow MH'_\bullet$ whence a quasi-isomorphism.  To construct such a homomorphism $U$, one will choose an interpolation from the Morse-Smale pair $(f, g)$ to pair $(f', g')$ and $U$ is defined similarly as the differential by studying the gradient flow lines connecting critical points of the same Morse index coming from different pairs. In general, the homomorphism $U$ depends on the choice of the interpolation! If we choose two different generic interpolations, the two corresponding homomorphisms $U_0$ and $U_1$ satisfy
$$U_1-U_0=\partial' E-E\partial,$$
with $E$ a linear transformation between the two complexes which however reduces the degree by $1$, i.e., a homotopy between the homomorphisms $U_0$ and $U_1$. This is exactly (10.48) in\cite{gaiotto-moore-witten}. Once this is established, the induced maps on homology $\hat{U}_1=\hat{U}_0$ follows easily. Interestingly, $E$ can be constructuted from a generic interpolation between the two interpolations generating $U_0$ and $U_1$!

Now it is clear that the moduli spaces of gradient flow lines play a fundamental role in Morse theoretic studies. On the other hand, the complex itself, not merely an auxiliary tools to topology, becomes more and more important other than the homology group and other topological invariants. Perhaps, it is not a bad idea to go one step further in this direction. Properties of the moduli spaces will naturally lead to the algebraic structures behind the theory.

In \S $2$ we recall some basic facts and constructions in Morse theory to fix notations.

In \S $3$ we develop the analysis required for the compactified moduli space $\overline{\mathcal{M}}_{p,q}^{H[\ell]}$ with several parameters involved to arrive at the conclusion that $\overline{\mathcal{M}}_{p,q}^{H[\ell]}$ is a manifold with corners and has expected dimension. The first observation is that the iterated path space of the space of Riemannian metrics on a closed manifold is a Banach manifold. Then we introduce higher homotopies of functions and metrics interpolating given Morse-Smale pairs, and propose appropriate regular conditions on such pairs by generalizing the Morse-Smale condition{s} on a Morse-Smale pair.  Under such regular conditions for generic higher homotopies, the operator $\widehat{F}_{[\ell]} :\widetilde{\mathcal{G}}^{[\ell]}\times [0,1]^{\ell} \times \mathcal{P}_{p,q}^{1,2}\longrightarrow pr^{\ast}\mathcal{E}$ (see (\ref{Fhat})) corresponding to the gradient flow has Fredholm linearization.
The desired property of $\overline{\mathcal{M}}_{p,q}^{H[\ell]}:=F^{-1}_{[\ell]G_{[\ell]}}(0)$ (Theorem \ref{modulispaceellcase}) follows from Proposition \ref{usefulprop}, {where $G_{[\ell]}$ is a generic element in $\widetilde{\mathcal{G}}^{[\ell]}$ and $F^{-1}_{[\ell]G_{[\ell]}}:=\widehat{F}_{[\ell]}(G_{[\ell]},\cdots)$}.
To compactify the moduli space, the broken gradient flow line with respect to $(H_{[\ell]},G_{[\ell]})$ is introduced. For any sequence in the moduli space, we prove (Theorem \ref{FGellcase}) that there exists a subsequence which Floer-Gromov converges to a broken gradient flow line by combining \cite{frauenfelder-nicholls} and \cite{schwarz} {if it does not converge to a point belonging to the moduli space}. For later purpose we study in detail the boundary behavior of $1$-dimensional compactified moduli spaces with any number of parameters which provides us the desired algebraic properties introduced in the following section.

 %More precisely, inspired by the norm Schwarz constructs, we construct the norm $\parallel \cdot \parallel_{\sim_{b}}$ fitting for $\widetilde{\mathcal{G}_{g_{0}}}$ and prove that $(\widetilde{\mathcal{G}_{g_{0}}},\parallel \cdot \parallel _{\sim_{b}})$ is a Banach manifold. In the process, we combine the methods of Frauenfelder \cite{frauenfelder-nicholls} and Schwarz \cite{schwarz} to give a proof of the regular theorem in terms of $H_{[0]}$ and $H_{[1]}$ case, and general $H_{[\ell]}$ case can be dealt with analogously. Additionally, the proof of the conclusion that

\S $4$ is the algebraic part of the paper, and here we basically follow the framework of Leinster (\cite{leinster}) for the $\infty$-categories.
After introducing the necessary notions,  we define an $n$-globular set in terms of Morse functions and their higher homotopies with natural composite and identity map. By extending $n$ to $\infty$, we get a weak $\infty$-category $\mathcal{A}$. Another strict $\infty$-category $\mathcal{B}$ is also constructed by exploring the Morse complexes and their higher homotopies.
Finally the boundary structures of the {\it one-dimensional} compactified moduli space of gradient flow lines of Morse functions with parameters allow us to get a weak $\infty$-functor $\mathcal{F}: \mathcal{A} \rightarrow \mathcal{B}$ (Theorem \ref{inftyfunctor}) to encode their relations. In the process, we give a more precise explanation about the non-generic codimension one sets for one dimensional moduli space which play a vital role to define the weak $\infty$-functor, and such sets are a kind of defects in TQFT.

For a fixed $H_{[\ell]}$ under regular condition, considering higher ($\geqslant 2$) dimensional moduli spaces would be an interesting topic.
In \S 5
%\textcolor{red}{(In Higher dimensional moduli space in \S \ref{further direction sec5})},
we will give a description about the boundaries of two dimensional compactified moduli space, and we leave the corresponding algebraic structures for future. Other further investigations mentioned in \S 5 include the orientation of the moduli space with the further refinements of our conclusions, and the generalization to Morse homotopy and even to infinite dimensional Floer homology.

Several related attempts to higher version of Morse theory are in sequel:

In \cite{hohloch}, an almost strict $n$-category in Morse theory was constructed. The objects are the critical points of a Morse function with morphisms the moduli space of gradient flow lines. And the higher morphisms come from doing iteratively Morse theory with good Morse function on moduli space of gradient flow lines which sit in the previous morphisms. Note that here $n$ stops at $\dim M+1$. We follow partially the terminology about $n$-categories in this paper which is well suited for our purpose.

Inspired by Morse theory, Lurie and Tanaka (\cite{lurie18}) also initiated a program to construct Morse complex without using gradient flow equations. They constructed a topological stack {\bf Broken} to abstract the information about the moduli space of gradient flow lines, and the topological stack can be represented as a Lie groupoid with corners. Sheaves on such moduli stack are also studied.

The homotopy transfer theorem (HPT) transfers the differential graded algebra (DGA) structure on the singular cochain complex to an $A_\infty$-algebra structure on its homotopy retract MSW complex.  In \cite{mazuir1}, the author proves that for a given Morse function on a smooth compact manifold, the MSW complex has an $\Omega BAs$-algebra structure, a generalization of $A_\infty$ algebra structure, via moduli space of perturbed gradient trees (\cite{abouzaid11, mescher18}). The higher homotopy theory for $\Omega BAs$-morphisms is also developed. The geometric flavor of using the moduli space of perturbed gradient trees is the same as ours.  However, to achieve such higher algebra structures, the gradient vector field has to be modified around each vertex which we do not need.

It seems plausible to clarify the relations among these higher structures, and these complementary viewpoints will be helpful to get a deeper understanding of higher Morse theory.

The arrangement of the paper is as follows: after recalling the fundamental construction in Morse homology in \S 2, in \S 3 we develop the analytic part concerning the moduli space of gradient flow lines, here basically we follow \cite{schwarz} and \cite{frauenfelder-nicholls}. The latter did an excellent job based on the former which is the first complete analytical monograph on Morse homology to address and improve every aspect with all the required details about the moduli space of the gradient flow trajectories. The main difference here and theirs is that one Morse-Smale pair is enough for their purpose to construct the Morse complex and prove the independence of the Morse-Smale pair, however ALL Morse-Smale pairs are needed for us.

When we say a manifold in the paper, we always mean an $n$-dimensional closed connected smooth Riemannian manifold $(M,g)$.

%%%%%%%%%%%%%%%%%%%%%%%%%%%%%%%%%%%%%%%%%%%%%%%%%%%%%%%%%%%%%%%%%%%%%%%%%%%%%%%%%%%%%%%%%%%%%%%%%%%%%%%%%%%%%%%%%%%%%%%%%%%%%%%%%%%%%%%%%%%%%%%%%%%%%%%%%%%%%%%%%%
\begin{section}
{Basic Morse Theory}
\label{Basic Morse Theory sec2}
\end{section}
%-------------------------------------------------------- 第二章----------------------------------------------------------

\indent We briefly review aspects of Morse theory to fix notations for our constructions, and more details can be found in \cite{Arnold-Gusein-Zade-Varchenko}, \cite{audin-damian}, \cite{frauenfelder-nicholls}, \cite{Guillemin- Sternberg}, \cite{milnor}, \cite{nicolaescu} and \cite{schwarz}, to name a few.
%Let $(M,g)$ be a Rimenannian manifold and the dimension of $M$ is $n$.

%\begin{definition}
Given a smooth function $f:M \rightarrow \R$,  a point $p\in M$ is called a critical point of $f$  if $df(p)=0$, which means in the local coordinates $(x_{1}, x_{2},...,x_{n})$ around $p$, we have $\frac{\partial f}{\partial x_{1}}(p)= ...= \frac{\partial f}{\partial x_{n}}(p)= 0.$ The set of all its critical points is denoted by $Cr(f)$. A smooth function $f: M\rightarrow \R$ is called a Morse function if all the critical points are non-degenerate, i.e., for any critical point $p$ of $f$, the Hessian
$Hessian(f)(p):= \big(\frac{\partial^{2} f}{\partial x_{i}\partial x_{j}}(p)\big)_{i,j}$ is non-degenerate.
%$Hessian(f)(p):= \big(\frac{\partial^{2} f}{\partial x_{i}\partial x_{j}}(p)\big)_{1\leq i, j\leq n}$
{In the literature Hessian is also defined in terms of the Riemannian metric, that is, $Hessian(f)(p):= \big(g^{ij}\frac{\partial^{2} f}{\partial x_{i}\partial x_{j}}(p)\big)_{i,j}$. }
%\end{definition}

According to the classical Morse lemma, the Morse function $f$ has a very simple form in an appropriate local coordinate system near any critical point.
%\begin{lemma}($\mathbf{Morse\ Lemma}$)
More specifically, let $p\in Cr(f)$, then there exists a neighborhood $U_{p}$ of $p$, where $f(x)$ can be written in the form of $f(x)= f(p)- x_{1}^{2}- ...- x_{k}^{2}+ x_{k+1}^{2}+ ...+ x_{n}^{2},\, \forall x\in U_p$. We call it the Morse chart. Moreover, $k$ is called the Morse index of $p$, denoted by $ind (p)=k$. Note that the non-degeneracy and index of the critical point is independent of the variant of the Hessian.
%\end{lemma}

The compactness of $M$ will simplify the situation a lot since
%\begin{proposition}
a Morse function $f:M \rightarrow \R$ has only finitely many critical points if $M$ is compact.
%\end{proposition}

%To define the negative gradient vector field, we need the definition of gradient.
%\begin{definition}
On a Riemannian manifold $(M,g)$, the gradient $\nabla _{g}f$ of a smooth function $f:M\rightarrow \mathbb{R}$ with respect to the Riemannian metric $g$ is defined to be the unique vector field such that
$\langle \nabla_{g} f, Y\rangle_g= df(Y)$ for any vector field $Y\in \Gamma(TM)$. If there is no potential confusion, we will abbreviate $\nabla _{g}f$ to $\nabla f$.
%\end{definition}
%高维同理 $\nabla _{g_{t}^{s}}f_{t}^{s}$ by $g<\nabla_{g_{t}^{s}} f_{t}^{s}, Y>:= df_{t}^{s}(Y)$....
%$\nabla f=grad f$ be careful this not connection
Note that in the definition of the critical points, we don't need the metric. However the gradient vector field does depend on the Riemannian metric $g$ on $M$.

Now consider the negative gradient flow equation for a fixed Morse function $f$ and a Riemannian metric $g$ on $M$:
$ \dot{u}(t)= -\nabla _{g} f(u(t))$.
%$ \dot{u(t)}=du(\frac{\partial}{\parial t})$, $\frac{\partial}{\parial t}$ take euclidena connection on $\mathbb{R}$.
A solution $u:\R\rightarrow M \in C^{\infty}(\R, M)$ of the system of equations is called a parametrized negative gradient flow line or a trajectory, and {sometimes we call it simply gradient flow line if there is no ambiguity}.

%\begin{remark}
We are interested in the moduli space of all smooth solutions to the negative gradient flow equation above under appropriate boundary conditions. However, this space is lack of completeness and not a Banach space, and the Sard's theorem cannot be used. To resolve this issue, as usually done in functional analysis, we need to relax the requirements to get a Banach manifold $\mathcal{P}^{1,2}_{p,q}$ with local model a Banach space.  In fact, we will introduce $\mathcal{P}^{1,2}_{p,q}$ in the next chapter. According to the regularity theorem of elliptic differential operators, it is the moduli space with the desired properties.
%\end{remark}

%\begin{proposition}
We also have the fact that any negative gradient flow line $u(t)$ is asymptotic to critical points of the Morse function $f$, i.e.,
$\lim\limits_{t\rightarrow -\infty} u(t)=p\in Crf,\ \lim\limits_{t\rightarrow \infty} u(t)=q\in Crf.$
%\end{proposition}

%The negative gradient vector field generates the gradient flow $\phi_{\nabla f}^{t}: M\rightarrow M,\,\forall t\in\R$.
%It is defined by $$\frac{d}{dt}\phi_{\nabla f}^{t}(x)=-\nabla_g f(\phi_{\nabla f}^{t}(x)),\,\,\forall x\in M.$$
%$\phi_{\nabla f}^{t}: M\rightarrow M$ is a smooth diffeomorphism, $\forall t\in \R$, and satisfies
%$$\phi_{\nabla f}^{0}=id,\,\, \phi_{\nabla f}^t\circ \phi_{\nabla f}^s=\phi_{\nabla f}^{t+s}.$$

The rest point of the gradient flow $u(t)$ corresponds to the critical point of $f$. Fix a critical point $p\in Crf$, we have the stable manifold $W^s({p})$ and unstable manifold $W^{u}(p)$ of the flow $u(t)$ at $p$.
%\begin{definition}
%For $p\in Crf$, its stable manifold $W^s({p})$ and unstable manifold $W^u({p})$ are \textcolor{red}{
%$W^{s}(p)=\big\{ x\in M    \big| \lim\limits_{t\rightarrow \infty} \phi_{\nabla f}^{t}(x)=p    \big\}$,
%$W^{u}(p)=\big\{ x\in M    \big| \lim\limits_{t\rightarrow -\infty} \phi_{\nabla f}^{t}(x)=p    \big\}$.}
%\end{definition}
They are diffeomorphic to $ind p$- and $(dim M- ind p)$-dimensional balls respectively. Once again, we would like to emphasize that these definitions involve the Riemannian metric $g$ as $\nabla f=\nabla_{g} f$.

%\begin{definition}\label{MS}
Let $f: M \longrightarrow \R$ be a Morse function on $M$, and $g$ be a Riemannian metric. We call the pair $(f,g)$ a \textbf{Morse-Smale (MS) pair} if for any two critical points $p,q\in Crf$ of $f$, $W^{u}(p)$ intersects $W^{s}(q) $ transversally, i.e. for any $x\in W^{u}(p)\cap W^{s}(q)$,
we have $T_{x}M= T_{x}W^{u}(p)+T_{x}W^{s}(q).$
%\end{definition}
%后面useful prop实际对g的要求就是s.t.满足MS condition.

For a MS pair $(f,g)$ and a pair of critical points $p,q \in Crf$, consider the negative gradient flow equation $\frac{\partial u}{\partial t}=-\nabla_{g}f(u(t))$ with the boundary conditions $\lim_{t\rightarrow -\infty}u(t)=p,\,\,\,\, \lim_{t\rightarrow +\infty}u(t)=q$. In local coordinates, it has the form: $\frac{du^{i}}{dt}= - \sum_{j=1}^n g^{ij}\frac{\partial f}{\partial u^{j}}$,
%\begin{equation}
%\frac{du^{i}}{dt}= - \sum_{j=1}^n g^{ij}\frac{\partial f}{\partial u^{j}},\nonumber
%\end{equation}
where $(g^{ij})=(g_{ij})^{-1}$, $i,j=1,\cdots,n$.
%\begin{equation}
%\lim_{t\rightarrow -\infty}u(t)=p,\,\,\,\, \lim_{t\rightarrow +\infty}u(t)=q.\nonumber
%\end{equation}
%\indent Actually this is the form that we write in the coordinates, there is another way to express, which we can treat a Fredholm operator $F = \frac{\partial}{\partial_{t}}+\nabla_{g}f$ acting on a banach space(\cite{schwarz}).\\(放后面)

We define the solution space with respect to these boundary conditions to be
$$\widetilde{\mathcal{M}}_{p,q}^{f}:=\{u:\R\longrightarrow M| \frac{\partial u}{\partial t}=-\nabla_{g}f(u(t)),\, \lim_{t\rightarrow -\infty} u(t)=p \,\, \textrm{and} \,\, \lim_{t\rightarrow +\infty} u(t)=q \}.$$
And we call it the moduli space of the negative gradient flow lines from $p$ to $q$. It is of dimension $ind(p)-ind(q)$.
%here should write u\in C^{\infty} or $\mathcal{P}^{1,2}_{p,q}$ ??

In fact, there is a natural free $\R$-action on $\widetilde{\mathcal{M}}_{p,q}^{f}$, namely the time translation
\begin{equation}
\begin{split}
\R\times \widetilde{\mathcal{M}}_{p,q}^{f}&\longrightarrow \widetilde{\mathcal{M}}_{p,q}^{f}\\
(r,u(\cdot))&\longmapsto u(\cdot+r).\nonumber
\end{split}
\end{equation}
We define the reduced moduli space  $\mathcal{M}_{p,q}^{f}$ as the quotient
$\mathcal{M}_{p,q}^{f}:= \widetilde{\mathcal{M}}_{p,q}^{f}/\R$ with respect to the $\R$-action.

The reduced moduli space $\mathcal{M}_{p,q}^{f}$ can be compactified to a stratified space $\overline{\mathcal{M}}_{p,q}^{f}$ by adding `broken lines'. The codimension-$(k-1)$ stratum $\mathcal{M}_{k}(p,q)$ is described as follows. Assume $ind (p)- ind (q)= \ell (\geqslant 1)$, we consider $k-1$ critical points $c_{1},...,c_{k-1}$ such that $ind(p)> c_{1}> ...> c_{k-1}> ind(q)$. Thus the maximum of $k$ is $\ell$, i.e. $1\leqslant k\leqslant \ell$. Consider the free $\R^{k}$-action on $\widetilde{\mathcal{M}}_{p,c_{1}}^{f}\times\widetilde{\mathcal{M}}_{c_{1},c_{2}}^{f} \times \cdots \times \widetilde{\mathcal{M}}_{c_{k-1},q}^{f}$
\begin{equation}
\begin{split}
\R^{k}\times \widetilde{\mathcal{M}}_{p,c_{1}}^{f}\times\widetilde{\mathcal{M}}_{c_{1},c_{2}}^{f} \times \cdots \times \widetilde{\mathcal{M}}_{c_{k-1},q}^{f}&\longrightarrow \widetilde{\mathcal{M}}_{p,c_{1}}^{f}\times\widetilde{\mathcal{M}}_{c_{1},c_{2}}^{f} \times \cdots \times \widetilde{\mathcal{M}}_{c_{k-1},q}^{f}\\
((r_{1},...,r_{k}),(u_{1},...,u_{k}))&\longmapsto (u_{1}(r_{1}+\cdot),...,u_{k}(r_{k}+\cdot)).
\nonumber
\end{split}
\end{equation}
Then we define
$\mathcal{M}_{k}(p,q):= \widetilde{\mathcal{M}}_{p,c_{1}}^{f}\times\widetilde{\mathcal{M}}_{c_{1},c_{2}}^{f} \times \cdots \times \widetilde{\mathcal{M}}_{c_{k-1},q}^{f}/\R^{k}$.
%可以简写成这样吧：$\mathcal{M}_{k}(p,q):= \mathcal{M}(p,c_{1})\times ... \mathcal{M}(c_{k-1},q)$
Specially, if $k=1$, we only have the codimension-$0$ stratum, i.e., the top stratum $\mathcal{M}_{1}(p,q)= \mathcal{M}_{p,q}^{f}$ as expected. By adding these codimension-$k$ strata, we obtain the compactification $\overline{\mathcal{M}}_{p,q}^{f}$ of $\mathcal{M}_{p,q}^{f}$, i.e.,
$\overline{\mathcal{M}}_{p,q}^{f}= \bigsqcup\limits_{k=1}\limits^{\ell}\mathcal{M}_{k}(p,q).$

Recall that a smooth manifold with corner is a second countable Hausdorff space $M$ of dimension $n$ such that $\forall p\in M,$ it has a neighbourhood homeomorphic to an open set of $\R^{k}\times [0,\infty)^{n-k}$ with $k$ integer such that $1\leqslant k\leqslant n$, and the transition maps are smooth.

Now we can state the key fact about the moduli space.
\begin{theorem}(\cite{hutchings}\label{modulionefunction}, p. 10)
Let $(f,g)$ be a Morse-Smale pair on the closed manifold $M$ with two given critical points $p, \, q\in Cr f$. Then the natural compactification $\overline{\mathcal{M}}^f_{p,q}$ of the reduced moduli space $\mathcal{M}^f_{p,q}$ is an oriented smooth manifold of dimension $ind(p)-ind(q)-1$ with boundary and corners.
\end{theorem}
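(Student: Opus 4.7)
The plan is to follow the classical three-step route: (i) establish that the unreduced moduli space is a smooth manifold of the predicted dimension, (ii) quotient by the free $\R$-action to handle the reduced moduli space, and (iii) analyze the boundary strata via gluing to obtain the corner structure.

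First I would identify $\widetilde{\mathcal{M}}_{p,q}^f$ with $W^u(p)\cap W^s(q)$. The Morse--Smale condition says precisely that this intersection is transverse, so it is a smooth submanifold of $M$ of dimension
\[
\dim W^u(p)+\dim W^s(q)-\dim M = \mathrm{ind}(p)+(\dim M-\mathrm{ind}(q))-\dim M = \mathrm{ind}(p)-\mathrm{ind}(q).
\]
(Alternatively, one sets up the problem on the Banach manifold $\mathcal{P}^{1,2}_{p,q}$ introduced in the text, linearizes the negative gradient section, checks Fredholmness with index $\mathrm{ind}(p)-\mathrm{ind}(q)$ using asymptotic convergence to hyperbolic rest points, and applies the implicit function theorem; under the Morse--Smale hypothesis surjectivity is automatic.) The free $\R$-action by time translation is smooth and proper on the top stratum when $p\neq q$, so the quotient $\mathcal{M}_{p,q}^f$ is a smooth manifold of dimension $\mathrm{ind}(p)-\mathrm{ind}(q)-1$.

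Next I would handle the compactification. The input is the precompactness theorem: any sequence $[u_n]\in \mathcal{M}_{p,q}^f$ has a subsequence that Floer--Gromov converges to a broken trajectory in some stratum $\mathcal{M}_k(p,q)$; this is the single-pair analogue of Theorem \ref{FGellcase} stated in the excerpt. So set-theoretically $\overline{\mathcal{M}}_{p,q}^f=\bigsqcup_{k=1}^{\ell}\mathcal{M}_k(p,q)$. Each stratum $\mathcal{M}_k(p,q)$ is itself smooth of dimension $\mathrm{ind}(p)-\mathrm{ind}(q)-k$ by step (i) applied to each factor and by reducing the $\R^k$-action; in particular there are strata of codimension $0,1,\dots,\ell-1$.

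The main obstacle, and the step that does the real work, is producing smooth corner charts around the deep strata via gluing. Given a broken trajectory $(u_1,\dots,u_k)\in\widetilde{\mathcal{M}}_{p,c_1}^f\times\cdots\times\widetilde{\mathcal{M}}_{c_{k-1},q}^f$, one constructs a pre-glued path $u_1\#_{\rho_1}\cdots\#_{\rho_{k-1}} u_k$ depending on gluing parameters $\rho_i\in[R_0,\infty)$, solves the gradient equation perturbatively using the uniform right inverse of the linearized operator on the pre-glued path (this uses the exponential decay to the $c_i$'s and the Morse--Smale transversality to keep the inverse bounded as $\rho_i\to\infty$), and shows via the quantitative implicit function theorem that the resulting map is a diffeomorphism from $\mathcal{M}_k(p,q)\times[R_0,\infty)^{k-1}$ onto a neighborhood of the broken trajectory in $\overline{\mathcal{M}}_{p,q}^f$. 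Reparametrizing $\rho_i=1/s_i$ with $s_i\in[0,1/R_0)$ gives a chart of the form (open set of $\R^{\mathrm{ind}(p)-\mathrm{ind}(q)-k}$)$\times[0,\varepsilon)^{k-1}$, which is exactly the manifold-with-corners model, and smooth compatibility between overlapping gluing charts follows from the uniqueness part of the implicit function theorem.

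Finally, orientation is produced by choosing orientations of the unstable manifolds $W^u(r)$ at every critical point $r\in Crf$; this orients each $\widetilde{\mathcal{M}}_{r,r'}^f$ through the transverse intersection formula, passes to $\mathcal{M}_{r,r'}^f$ by orienting the $\R$-orbits using the flow direction, and the coherent orientation of the gluing map (standard sign convention, compatible with the splitting of determinant lines of Fredholm operators) shows that the induced orientations on the boundary strata agree with those coming from the corner charts. Combining steps (i)--(iii) with the orientation discussion yields the claimed oriented smooth manifold with boundary and corners of dimension $\mathrm{ind}(p)-\mathrm{ind}(q)-1$.
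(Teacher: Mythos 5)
The paper does not prove this statement; it cites Hutchings and explicitly says ``Here we are content with stating only the result,'' deferring the analysis of moduli spaces to \S\ref{Moduli Space and its Compactification sect3}. Your proposal is therefore not being compared against a proof in the paper but against the standard literature, and on that footing it is correct and complete in outline: identifying $\widetilde{\mathcal{M}}_{p,q}^f$ with the transverse intersection $W^u(p)\cap W^s(q)$ (or equivalently the Fredholm/implicit-function-theorem route on $\mathcal{P}^{1,2}_{p,q}$), quotienting by the free and proper $\R$-action, Floer--Gromov precompactness to identify the added strata, the pre-gluing plus uniform-right-inverse plus quantitative implicit function theorem argument to produce corner charts $\R^{d-k}\times[0,\varepsilon)^{k-1}$ with smooth transitions, and orientations via choices on the $W^u$ made coherent under gluing. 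This is exactly the route in Hutchings' notes and in Schwarz, and it is the one the paper itself later mimics in the parametrized setting. One small caution: the gluing step is the genuinely hard analytic content, and the chosen reparametrization of the gluing parameter (you take $s_i=1/\rho_i$) must be fixed once and for all to pin down the smooth structure on the compactification, with chart compatibility on overlaps then following from uniqueness in the implicit function theorem as you indicate; that is fine as stated, just worth flagging as the place where all the estimates live.
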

Here we are content with stating only the result, and we will explore the moduli spaces of various types further systematically in the next section to build up our algebraic structures.

{Now we recall Morse-Smale-Witten complex in Morse theory.}
%\begin{definition}$\mathbf{(Morse-Smale-Witten\, (MSW)\,  complex)}$
 Let $f: M\rightarrow \R$ be a Morse function on $M$ and
$Cr_{k}(f):=\{ p\in Cr(f)|ind (p)= k\}.$
Define a vector space $\C_{k}$ generated by $Cr_{k}(f)$ over $\Z_{2}$ as
$\C_{k}:= \Z_{2}\langle Cr_{k}(f)\rangle,$
and we introduce the graded vector space over $\Z_2$ as
$\C_\bullet:=\oplus_{k=1}^n \C_k.$
%\textcolor{red}{??? This is the case that the orientation is neglected. When we consider the orientation, the group $\Z_{2}$ need to be replaced by $\Z$.}
%\end{definition}
%\begin{definition}
For any $1\leqslant k\leqslant n$, define a linear map
%$\partial_{k}: \C_{k} \rightarrow \C_{k-1};\, p \mapsto \sum\limits_{q\in Cr_{k-1}(f)}\sharp_{2}(\mathcal{M}_{p,q}^{f})\cdot q$,
 \begin{eqnarray*}
 \partial_{k}: \C_{k} &\rightarrow& \C_{k-1},\\
  p &\mapsto& \sum\limits_{q\in Cr_{k-1}(f)}\sharp_{2}(\mathcal{M}_{p,q}^{f})\cdot q
  \end{eqnarray*}
where $\sharp_{2}(\mathcal{M}_{p,q}^{f})= \sharp (\mathcal{M}_{p,q}^{f})\ \  mod\ 2$, and $\sharp(\mathcal{M}_{p,q}^{f})$ means the number of elements in $\mathcal{M}_{p,q}^{f}$.
%\end{definition}
The linear map is well defined since the moduli space $\mathcal{M}_{p,q}^f$ consists of finitely many isolated points when $ind(p)-ind(q)=1$ following Theorem \ref{modulionefunction}. Furthermore,
%\begin{proposition}
the linear maps satisfy $\partial^{2}= 0$. Thus, $(\C_{\bullet} , \partial)$ is equipped with the structure of a chain complex, which is called the {\it Morse-Smale-Witten (MSW) complex}.
%\end{proposition}
%\begin{definition}

Let $(f,g)$ be a Morse-Smale pair on the closed manifold $M$ with $(\C_{\bullet} , \partial)$ its MSW complex. Define
the $\ell$-th {\it Morse homology group} of MSW chain complex to be
$MH_{\ell}:=\frac{Ker \partial_{\ell}}{Im \partial_{\ell+1}}, \ \ \ell=0,\cdots,n.$ What's more,
%\end{definition}
%\begin{theorem}\label{morse homology indep}
Morse homology group $MH_{\bullet}$ is independent of the choice of $(f,g)$.
%\end{theorem}
However, $( \C_{\bullet} , \partial)$ does depend on the choice of $(f,g)$. In fact, to understand the dependence on $(f,g)$ is the starting point and main goal of the current paper. Another important fact concerning the Morse homology group is that
%\begin{theorem}
Morse homology group $MH_{\bullet}$ is isomorphic to the singular homology group of $M$, i.e. $MH_{\bullet}\cong H_{\bullet}$.
%\end{theorem}

\indent
%Need to say that the homology group of $( \C_{\bullet} , \partial)$ is independent of the choice of $(f,g)$, which the proof
The proof of the theorem that Morse homology group $MH_{\bullet}$ is independent of the choice of $(f,g)$ relies on the careful analysis of the moduli space of the parameterized negative gradient flow lines. In this paper, we focus on the chain complex rather than the homology and want to figure out algebraic structures via the parameterized negative gradient flow line and its corresponding moduli space to which we turn now.

%%%%%%%%%%%%%%%%%%%%%%%%%%%%%%%%%%%%%%%%%%%%%%%%%%%%%%%%%%%%%%%%%%%%%%%%%%%%%%%%%%%%%%%%%%%%%%%%%%%%%%%%%%%%%%%%%%%%%%%%%%%%%%%%%%%%%%%%%%%%%%%%%%%%%%%%%%%%%%%%%%
\begin{section}
{Moduli Space and its Compactification}
\label{Moduli Space and its Compactification sect3}
\end{section}
%-------------------------------------------------------- 第三章----------------------------------------------------------

In the last section, we have already met the moduli space of negative gradient flow lines of a Morse-Smale pair, in which case there is only one Morse function involved. As noted in the introduction, to prove the independence of the Morse homology group on the Morse-Smale pair, a ``cobordism'' between two Morse-Smale pairs was used to derive the homomorphism between the corresponding Morse-Smale-Witten complexes by exploring the moduli space of gradient flow lines connecting the critical points belonging to different MS pairs. We call such interpolation between the MS pairs a homotopy between them. In general two such homomorphisms are NOT equal exactly, and only equal up to homotopy! This suggest us to go further to higher homotopy of the two homotopies of MS pairs to unravel the algebraic structures among the MS complexes. As a first step we need to understand better the resulting moduli spaces and their compactifications which is the subject of the section.

More precisely, to encode the relations among the homotopies, we will consider the space of maps from parameter space $[0,1]^k$ to the space of Riemannian metrics and smooth functions on $M$.  We adapt the analysis from \cite{schwarz} and \cite{frauenfelder-nicholls}. For the sake of conciseness, we only emphasize the differences.

At the starting point we prove that the iterated path space of the space of Riemannian metrics on a closed manifold is  a Banach manifold which will be a basic fact needed to generalize Proposition \ref{usefulprop} to our settings.

\textbf{Space $(\mathcal{G}_{g_{0}},\parallel \cdot \parallel _{a})$  of Riemannian Metrics}

Following Schwarz(\cite{schwarz}, p. 45-46), let $g_{0}$ be a fixed Riemnnian metric on $M$ with induced norm $\|\cdot\|$. The space $\mathcal{G}_{g_{0}}$ consists of all Riemannian metrics on $M$, and it can be viewed as a subset of a vector space formed by finite norm sections. We first introduce the vector space of sections together with a norm which turns out to be a Banach space.

\begin{definition}%\textbf{(norm)}
Let $E$ be a vector bundle on $M$. Assume $E$ is equipped with a bundle norm $\mid \cdot\mid$ and a covariant derivation $\nabla$. Fix a sequence $(a_{j})_{j\in\mathbb{N}}$ such that $a_{j}\in \mathbb{R}_{>0}$, for any $j\in\mathbb{N}$.
For any smooth section $s\in C^{\infty}(E):=C^{\infty}(M, E)$, we define
$$\parallel s \parallel _{a}:=\sum\limits_{j\geqslant0} a_{j} \sup\limits_{p\in M}\mid \nabla^{j}s(p)\mid,$$
where$\mid \nabla^{j}s(p)\mid =
\max \{\mid \nabla_{X_{i_{1}}}\cdots \nabla_{X_{i_{j}}}s(p) \mid \big|
\parallel X_{i_{1}} \parallel=\cdots =\parallel X_{i_{j}} \parallel=1,\
\ i_{1},\cdots,i_{j}\in \{1,\cdots, \dim M = n\}, \ \, X_{i_{k}}\in T_{p}M, \forall k=1,\cdots,j\}$ and $\mid \nabla^{0}s(p)\mid:= \mid s(p)\mid$.
\end{definition}

\begin{remark}
$1.$ Actually, for an arbitrary element $s\in C^{\infty}(E)$, $\parallel s \parallel _{a}$ could be infinite. As usual, only when $\parallel s \parallel _{a}$ is finite for any $s$ in an appropriate  subspace, we call $\parallel \cdot \parallel _{a}$ a norm for that space.

$2.$ For the $C^k$ smooth case, we can define the norm similarly, i.e. if $s\in C^{k}(E)$, then for given positive real sequence $(a_{j})_{j=0}^{k}$, we take $\parallel s \parallel _{a}:=\sum\limits_{j=0}^{k} a_{j} \sup\limits_{p\in M}\mid \nabla^{j}s(p)\mid$.
\end{remark}

As indicated in the last remark, we gather together all finite sections under $\parallel \cdot \parallel _{a}$ which constitute a vector space,
$$ C^{a}(E):= \{s\in C^{\infty}(E)\big| \parallel s \parallel _{a}< \infty \}.  $$
So $\parallel \cdot \parallel _{a}$  is a norm on space $ C^{a}(E)$.

\begin{remark}
$1.$ The sequence $(a_{j})_{j\geqslant0}$, $\forall a_{j}\in \R_{>0}$, can be taken arbitrarily.  Taking advantage of the arbitrariness, we can choose good sections $s$ such that $\parallel s \parallel _{a}< \infty$, which roughly means that when $j$ is sufficiently large, $\mid\nabla^{j}s\mid$ tends to zero.
%, and the speed tending to zero is also large enough.

$2.$ Such sequence of positive real numbers $(a_{j})_{j\geqslant0}$ always exists so that $(C^{a}(E),\parallel \cdot \parallel _{a})$ is not empty. The idea is that any chosen section $s\in C^{\infty}(E)$ can be viewed as a standard vector bundle section (in our case, standard Riemannian metric), then there exists a sequence of positive real numbers $(a_{j})_{j\geqslant0}$ such that $s$ belongs to $(C^{a}(E),\parallel \cdot \parallel _{a})$. The point is that for such fixed $s$ and for any $j$, $\sup\limits_{p\in M}\mid \nabla^{j}s(p)\mid$ is fixed too, so we can choose $a_j$ that $(a_{j})$ goes to zero fast enough to ensure $ \parallel s \parallel _{a}< \infty$.
\end{remark}

\begin{proposition} \label{Banach space}
$(C^{a}(E),\parallel \cdot \parallel _{a})$ is a Banach space if it is not empty.
\end{proposition}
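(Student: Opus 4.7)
The plan is to verify completeness by the standard three-step argument: extract candidate limits from uniform convergence at each derivative order, check that the candidate limit is smooth, and then promote uniform convergence at every order to convergence in the norm $\|\cdot\|_a$. Let $\{s_n\}_{n\geqslant 1} \subset C^a(E)$ be Cauchy with respect to $\|\cdot\|_a$. For each fixed $j\geqslant 0$, the elementary bound
\[
a_j \sup_{p\in M} |\nabla^j s_n(p) - \nabla^j s_m(p)| \;\leqslant\; \|s_n - s_m\|_a
\]
together with $a_j>0$ shows that $\{\nabla^j s_n\}_n$ is uniformly Cauchy as a sequence of continuous sections of $(T^\ast M)^{\otimes j}\otimes E$. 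Since $M$ is closed, continuous sections of this bundle with the sup norm form a Banach space, so there is a continuous section $t_j$ with $\nabla^j s_n \to t_j$ uniformly on $M$.

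Next I would show that $s:=t_0$ is smooth and $\nabla^j s = t_j$ for every $j$. This uses the classical fact that if a sequence of $C^{k+1}$ sections converges pointwise while their covariant derivatives of orders $0,1,\dots,k+1$ converge uniformly, then the pointwise limit is $C^{k+1}$ and its derivatives coincide with the limits of the derivatives. Since we have uniform convergence at \emph{every} order, induction on $k$ yields $s\in C^\infty(E)$ and $\nabla^j s = t_j$ for all $j\geqslant 0$.

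Finally, to promote these uniform convergences to convergence in $\|\cdot\|_a$, I would use a finite truncation. Given $\eps>0$ pick $N$ with $\|s_n - s_m\|_a<\eps$ for $n,m\geqslant N$. For any finite $K$,
\[
\sum_{j=0}^K a_j \sup_{p\in M} |\nabla^j s_n(p) - \nabla^j s_m(p)| \;\leqslant\; \eps.
\]
Letting $m\to\infty$ (legitimate because $\sup_p|\cdot|$ is continuous under uniform convergence) and then $K\to\infty$ gives $\|s_n-s\|_a\leqslant\eps$. In particular $s-s_N\in C^a(E)$, whence $s\in C^a(E)$, and $s_n\to s$ in $\|\cdot\|_a$.

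The main obstacle is the second step, the smoothness of the candidate limit: $C^a(E)$ sits inside $C^\infty(E)$ by definition, so one must actually invoke the interchange-of-limits theorem for iterated covariant derivatives rather than merely for partial derivatives. This is handled by working in local trivializations, where the bundle norm $|\cdot|$ is comparable to the Euclidean norm on fibres and $\nabla^j$ expands as ordinary partial derivatives plus lower-order combinations of Christoffel symbols; combined with the uniform bounds extracted in the first step, this reduces the assertion to the classical theorem on compact coordinate patches in $\R^n$. A subsidiary but trivial point is the interchange of the infinite sum with $\lim_{m\to\infty}$, which is precisely why the argument above is organized through the finite-$K$ truncation.
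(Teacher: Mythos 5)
Your argument is correct and follows essentially the same route as the paper, which merely sketches a two-step verification (uniform $C^\infty$ convergence of a Cauchy sequence, then finiteness of $\|s\|_a$) and "leaves the details to the reader." Your finite-truncation device for passing from uniform Cauchy at each derivative order to convergence in $\|\cdot\|_a$ is exactly the missing detail, and your appeal to the local-trivialization argument for smoothness of the limit is the right way to handle the interchange of limits for iterated covariant derivatives.
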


To prove the statement, we only need to verify that any Cauchy sequence is convergent in $C^{a}(E)$. It can be proved in two steps. The first one is to show that any Cauchy sequence uniformly converges to a section $s\in C^{\infty}$. The second step aims to check that $\parallel s \parallel _{a}< \infty$. The details are left to the reader.

Now, as in Schwarz (\cite{schwarz}, p. 45-46), we define the space of Riemannian metrics.

\begin{definition}
For a Riemannian metric $g_{0}$ on $M$, we define
$$\mathcal{T}_{g_{0}}:= \{(p,T_{p})\big| p\in M, T_{p}  \textrm{\,\, is positive definite and self-adjoint linear transformation  with respect to} \, \langle\cdot,\cdot\rangle_{p}:= g_{0}(p)\}$$ and
 $\mathcal{G}_{g_{0}}:= C^{a}(\mathcal{T}_{g_{0}}).$
\end{definition}
%$\mathcal{G}_{g_{0}}$ consists of all Riemannian metrics on $M$.
As positive definiteness is an open condition,  $\mathcal{T}_{g_{0}}$ is an open neighbourhood of section $1_{M}$ in a subbundle of $E=End(TM)$ over $M$.
Proposition \ref{Banach space} tells us that $\mathcal{G}_{g_{0}}$ is a Banach manifold. Recall that Banach manifold is a manifold which is locally modeled on Banach space.

\textbf{Path Space $(\widetilde{\mathcal{G}_{g_{0}}},\parallel \cdot \parallel _{\sim_{b}})$ of the Space of Riemannian Metrics}

Given two Riemannian metrics $g^{\alpha}, \ g^{\beta}\in \mathcal{G}_{g_{0}}$, we consider the paths in $\mathcal{G}_{g_{0}}$  with fixed ends $g^{\alpha}$ and $ g^{\beta}$ , and they form the space $\widetilde{\mathcal{G}_{g_{0}}}$. To simplify the notation here we omit the dependence on the ends, and this is the main case that we will used later.
%Actually, $\mathcal{G}_{g_{0}}$ depends on the two ends $g^{\alpha}$ and $ g^{\beta}$. We denote $\mathcal{G}_{g_{0}}$ for simplicity if there is no ambiguity.
Then $\widetilde{\mathcal{G}_{g_{0}}}$ is also a Banach manifold in a Banach space under a norm $\parallel \cdot \parallel _{\sim_{b}}$ to be defined later.

\begin{definition}%\textbf{(norm)}
Let $E=End(TM)$ be the vector bundle on $M$. Define
$$ C^{\infty}([0,1],(C^{a}(E),\parallel \cdot \parallel_{a}))= \big\{\widetilde{s}:[0,1]\longrightarrow C^{a}(E)\, \textrm{smooth} \big\},$$
$$C^{\infty}([0,1],\mathcal{G}_{g_{0}}):=\big\{\widetilde{s}:[0,1]\longrightarrow \mathcal{G}_{g_{0}}\,\, \textrm{smooth} \,\,\big|\,\,\ \widetilde{s}(0)=g^{\alpha},\ \widetilde{s}(1)=g^{\beta} \big\}.$$
Fix as before another sequence $(b_{n})_{n\in\mathbb{N}}$ such that $b_{n}\in \mathbb{R}_{>0}$, for any $n\in\mathbb{N}$. %N contains 0.
For any smooth section $\widetilde{s}\in C^{\infty}([0,1],(C^{a}(E),\parallel \cdot \parallel_{a}))$, we define
$$\parallel \widetilde{s} \parallel _{\sim_{b}}:=\sum\limits_{n\geqslant0} b_{n} \sup\limits_{t\in [0,1]}\parallel \widetilde{s}^{(n)}(t)\parallel_{a},$$
here the superscript $(n)$ means $n$-th derivative with respect to time t.
%In fact, there are canonical isomorphisms from n-th tangent space to space $C^{a}(E)$.

Define
$$\widetilde{E^{a,b}}:=\big\{\widetilde{s}:[0,1]\longrightarrow C^{a}(E)\, \textrm{smooth}\,\big|\, \parallel \widetilde{s} \parallel _{\sim_{b}}< \infty \big\}\subset C^{\infty}([0,1],(C^{a}(E),\parallel \cdot \parallel_{a})),$$
$$\widetilde{\mathcal{G}_{g_{0}}}:=\big\{\widetilde{s}:[0,1]\longrightarrow \mathcal{G}_{g_{0}}\subset C^{a}(E) \,\,\big|\,\,\widetilde{s}(0)=g^{\alpha},\ \widetilde{s}(1)=g^{\beta},\  \parallel \widetilde{s} \parallel _{\sim_{b}} < \infty \big\}\subset \widetilde{E^{a,b}}.$$
\end{definition}

We have similar remarks as before,

\begin{remark}
$1.$  We can also define $\parallel \cdot \parallel _{\sim_{b}}$ in the $C^{k}$ situation. For $(k+1)$ given positive real numbers $(b_{n} )_{n=0}^{k}$, we define $\parallel \widetilde{s} \parallel _{\sim_{b}} := \sum\limits_{n=0}^{k} b_{n} \sup\limits_{t\in [0,1]}\parallel \widetilde{s}^{(n)}(t)\parallel_{a}$.\\
$2.$ There exists a sequence of positive real numbers $(b_{n})_{n\geqslant0}$ such that $\widetilde{E^{a,b}}$ is not empty. In fact, we have already known that $C^a(E)$ is nonempty by choosing suitable $(a_{n})_{n\geqslant0}$.
%we know from above that we can choose any $s\in C^{\infty}(E)$, then there is a sequence of positive real numbers $(\alpha_{n})_{n\geqslant0}$ such that $s\in C^{\alpha}(E)$, so $C^{\alpha}(E)$ is not empty.
Since $C^{a}(E)$ is a sub-vector space of $C^{\infty}(E)$, there exists some $\widetilde{s}\in C^{\infty}([0,1],C^{a}(E))$ such that $\widetilde{s}(t_{0})\in C^a(E)$ for some $t_0$ which implies that $\widetilde{s}(t)\in C^{a}(E)$ for any $t\in [0,1]$. Once this is done, the following is similar. Fixing such an $\widetilde{s}$, $\parallel \widetilde{s}^{(\ell)}(t)\parallel_{a} (\forall \ell\geqslant 0)$ are also fixed, then there exists a sequence of positive real numbers $(b_{n})_{n\geqslant0}$ tending to zero quickly enough, so that $\widetilde{s}$ belongs to $\widetilde{E^{a,b}}=\{\widetilde{r}\in C^{\infty}([0,1], C^{a}(E))\big|\parallel\widetilde{r} \parallel_{\sim_{b}}<\infty \}$. So there exists a sequence of positive real numbers $(b_{n})_{n\geqslant0}$ such that $\widetilde{E^{a,b}}$ is not empty.

$3.$ Furthermore, we can generalize the construction iteratively.
\end{remark}

\begin{proposition}
$(\widetilde{E^{a,b}},\parallel \cdot \parallel _{\sim_{b}})$ is a Banach space if it is not empty.
\end{proposition}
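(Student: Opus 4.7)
The plan is to follow the strategy sketched for Proposition \ref{Banach space}, adding one more layer to accommodate the path structure. Let $\{\widetilde{s}_{k}\}_{k\in\mathbb{N}}$ be a Cauchy sequence in $(\widetilde{E^{a,b}}, \parallel\cdot\parallel_{\sim_{b}})$. The first step is to extract, for each fixed derivative order $n\geqslant 0$, a uniform-in-$t$ limit of $\widetilde{s}_{k}^{(n)}$ by exploiting the estimate $\sup_{t\in[0,1]}\parallel \widetilde{s}_{k}^{(n)}(t) - \widetilde{s}_{\ell}^{(n)}(t)\parallel_{a} \leqslant b_{n}^{-1}\parallel \widetilde{s}_{k} - \widetilde{s}_{\ell}\parallel_{\sim_{b}}$, which shows that $\{\widetilde{s}_{k}^{(n)}\}$ is uniformly Cauchy in $t$ with values in the Banach space $C^{a}(E)$ supplied by Proposition \ref{Banach space}. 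It therefore converges uniformly to a continuous map $\varphi_{n}: [0,1]\to C^{a}(E)$.

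Next, I would invoke the classical differentiation-of-uniform-limits theorem for Banach-valued functions, applied inductively in $n$, to conclude that $\widetilde{s} := \varphi_{0}$ is smooth with $\widetilde{s}^{(n)} = \varphi_{n}$ for every $n$. To check $\parallel \widetilde{s}\parallel_{\sim_{b}} < \infty$, I use the fact that a Cauchy sequence is bounded, so $M := \sup_{k}\parallel \widetilde{s}_{k}\parallel_{\sim_{b}} < \infty$. For any finite $N$, the pointwise convergence $\widetilde{s}_{k}^{(n)}(t)\to \widetilde{s}^{(n)}(t)$ in $C^{a}(E)$ combined with continuity of $\parallel\cdot\parallel_{a}$ yields $\sum_{n=0}^{N} b_{n}\sup_{t}\parallel \widetilde{s}^{(n)}(t)\parallel_{a} \leqslant M$, and letting $N\to\infty$ via monotone convergence bounds the full norm by $M$. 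Convergence $\widetilde{s}_{k}\to \widetilde{s}$ in $\parallel\cdot\parallel_{\sim_{b}}$ follows by the same lower-semicontinuity argument applied to $\widetilde{s}_{k}-\widetilde{s}_{\ell}$ as $\ell\to\infty$.

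The main obstacle I expect is the interchange of limits. Showing that the uniform limit $\varphi_{0}$ is genuinely smooth, and that $\varphi_{0}^{(n)}=\varphi_{n}$, requires uniform convergence at every derivative order simultaneously, and the passage from the finite partial sums $\sum_{n\leqslant N}$ to the full series in $\parallel\cdot\parallel_{\sim_{b}}$ requires care when swapping with the $k\to\infty$ limit. However, because the norm weights each derivative order by a positive constant $b_{n}$, being Cauchy in the total norm automatically forces Cauchy behavior at every fixed order, and the induction then closes exactly as in the proof of Proposition \ref{Banach space}, just one level higher; the bookkeeping is routine once the weights are unraveled.
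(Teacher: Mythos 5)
Your proof is correct and follows exactly the two-step strategy the paper sketches for the analogous Proposition 3.4 (extract a $C^\infty$ limit from the Cauchy sequence via the completeness of $C^a(E)$, then verify the limit has finite $\parallel\cdot\parallel_{\sim_b}$-norm and is the norm limit), lifted one level to the path space; the paper itself leaves the details to the reader. The key observations you identify — that $b_n>0$ makes the sequence of $n$-th derivatives uniformly Cauchy in $t$ at every fixed order, that the Banach-valued differentiation-of-uniform-limits theorem applies inductively, and that the finite-$N$ truncation plus monotone convergence handles the norm bound — are precisely the pieces needed to fill in that omission.
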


\begin{proposition}
$(\widetilde{\mathcal{G}_{g_{0}}},\parallel \cdot \parallel_{\sim_{b}})$ is a Banach manifold in a Banach space.
\end{proposition}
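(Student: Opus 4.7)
My plan is to exhibit $\widetilde{\mathcal{G}_{g_0}}$ as an open subset of an affine Banach space sitting inside the ambient Banach space $\widetilde{E^{a,b}}$, following exactly the same template by which $\mathcal{G}_{g_0}$ was shown to be a Banach manifold in $C^a(E)$. The two ingredients I will need are: (i) an affine structure coming from the fixed-endpoint constraint, and (ii) openness of the positive-definiteness condition along the whole path.

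First I would introduce the closed linear subspace
\[
\widetilde{E^{a,b}}_{0}:=\big\{\widetilde{s}\in\widetilde{E^{a,b}}\,\big|\,\widetilde{s}(0)=0,\ \widetilde{s}(1)=0\big\}\subset \widetilde{E^{a,b}},
\]
closed because the evaluation maps at $0$ and $1$ are continuous from $(\widetilde{E^{a,b}},\|\cdot\|_{\sim_b})$ to $(C^{a}(E),\|\cdot\|_a)$ (indeed $\|\widetilde{s}(t)\|_a\leq b_0^{-1}\|\widetilde{s}\|_{\sim_b}$). Next I would produce one concrete reference path $\widetilde{s}_{0}\in \widetilde{E^{a,b}}$ with $\widetilde{s}_0(0)=g^\alpha$ and $\widetilde{s}_0(1)=g^\beta$: the straight line $\widetilde{s}_{0}(t):=(1-t)g^{\alpha}+t g^{\beta}$ takes values in $\mathcal{G}_{g_{0}}$ because the cone of positive definite self-adjoint sections is convex, is smooth in $t$ with $\widetilde{s}_{0}^{(n)}\equiv 0$ for $n\geqslant 2$, and has finite norm
\[
\|\widetilde{s}_{0}\|_{\sim_{b}}= b_{0}\sup_{t\in[0,1]}\|(1-t)g^{\alpha}+tg^{\beta}\|_{a}+b_{1}\|g^{\beta}-g^{\alpha}\|_{a}<\infty .
\]
Hence the collection of all paths in $\widetilde{E^{a,b}}$ with the prescribed endpoints is the affine Banach space $\widetilde{s}_{0}+\widetilde{E^{a,b}}_{0}$, canonically modeled on the Banach space $\widetilde{E^{a,b}}_{0}$.

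Second, I would check that $\widetilde{\mathcal{G}_{g_{0}}}$ is open inside this affine space. Given $\widetilde{s}\in \widetilde{\mathcal{G}_{g_{0}}}$, the image $K:=\widetilde{s}([0,1])$ is the continuous image of a compact interval into $(C^{a}(E),\|\cdot\|_a)$ and therefore is a compact subset of the open set $\mathcal{G}_{g_{0}}\subset C^{a}(E)$. A standard compact-versus-open argument (covering $K$ by finitely many $\|\cdot\|_a$-balls contained in $\mathcal{G}_{g_{0}}$ and passing to a Lebesgue number) produces $\varepsilon>0$ such that every section $\sigma\in C^{a}(E)$ with $\mathrm{dist}_{\|\cdot\|_a}(\sigma,K)<\varepsilon$ lies in $\mathcal{G}_{g_{0}}$. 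Since
\[
\sup_{t\in[0,1]}\|\widetilde{r}(t)-\widetilde{s}(t)\|_{a}\leqslant b_{0}^{-1}\|\widetilde{r}-\widetilde{s}\|_{\sim_{b}},
\]
every $\widetilde{r}$ in the $\|\cdot\|_{\sim_b}$-ball of radius $b_{0}\varepsilon$ around $\widetilde{s}$ still satisfies $\widetilde{r}(t)\in \mathcal{G}_{g_{0}}$ for all $t$; that is, it lies in $\widetilde{\mathcal{G}_{g_{0}}}$. This proves openness, so $\widetilde{\mathcal{G}_{g_{0}}}$ inherits a Banach manifold structure modelled on $\widetilde{E^{a,b}}_{0}$.

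The only step that requires genuine care is the compactness-plus-openness estimate in the last paragraph, because one must pass from pointwise positive-definiteness, which is open in $C^{a}(E)$, to path-wise positive-definiteness, which is a uniform-in-$t$ condition. All the remaining ingredients — closedness of $\widetilde{E^{a,b}}_{0}$, continuity of the endpoint evaluations, and existence of a reference path of finite $\|\cdot\|_{\sim_b}$-norm — are routine consequences of the defining estimate $\|\widetilde{s}(t)\|_a\leqslant b_0^{-1}\|\widetilde{s}\|_{\sim_b}$ and the convexity of the space of metrics, and I would leave their verification to the reader in the spirit of the preceding proposition.
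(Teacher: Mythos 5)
Your proof is correct and, since the paper states this proposition without proof, it supplies exactly the argument the authors left implicit. You decompose the claim into (i) an affine Banach model coming from the fixed-endpoint constraint, with the closed linear model space $\widetilde{E^{a,b}}_0$ and a convex-combination reference path, and (ii) openness of the uniform-in-$t$ positive-definiteness condition, deduced by the standard compactness/Lebesgue-number argument from the single-time openness of $\mathcal{G}_{g_0}\subset C^a(E)$ together with the controlling estimate $\sup_t\|\widetilde r(t)-\widetilde s(t)\|_a\leqslant b_0^{-1}\|\widetilde r-\widetilde s\|_{\sim_b}$. This is the same template by which the paper treats $\mathcal{G}_{g_0}$ itself (openness of $\mathcal{T}_{g_0}$ inside $\mathrm{End}(TM)$), lifted one step to the path level, and there is no gap in the reasoning.
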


%\indent For the iterated case, we can handle it similarly.\\

One can consider the path space of the path space $\widetilde{\mathcal{G}_{g_0}}$ of the space $\mathcal{G}_{g_{0}}$ of the Riemannian metric on $M$ recursively by introducing more parameters and get the Banach manifold structure.  To lighten the notations, we will use $\widetilde{\mathcal{G}}^{[\ell]}$ to denote such ``higher'' path space with fixed ends of the space of Riemannian metrics, which we now turn to.
%$\widetilde{\mathcal{G}}_{g_{0}}^{[l]: =\big\{G_{[l]}:[0,1]^{l}\rightarrow  \widetilde{\mathcal{G}}_{g_{0}}\big| G_{[l]}$ depend on the parameters $s^{\underline{l}}, \cdots, s^{\underline{1}} \in[0,1]^{l}$ smoothly $\big\}$.
%!!! need modification! generic shouldn't show up and this requirements is much more than we need now.
%只介绍H_{l},G_{l}组成的空间即可，不需要generic，regular的概念！应该也不需要codim 1, 只要引入 \mathcal{H}^{[l]},\mathcal{G}^{[l]}即可，并且用递归定义！！

\textbf{Higher homotopies of functions and metrics}

Let us first fix two MS pairs $(f^{\alpha},g^{\alpha})$ and $(f^{\beta},g^{\beta})$ once and for all. {%Other authors \cite{frauenfelder-nicholls} \cite{schwarz} use the following convention $1$ which $t\in \R$.\\
%Convention $1$:
We define a homotopy $H^{\alpha \beta}_{[\ell]}$ between the two Morse functions$f^{\alpha}$ and $f^{\beta}$ to be a smooth map
\begin{eqnarray*}
H^{\alpha \beta}_{[\ell]}: [0,1]^{\ell}\times \mathbb{R}\times M &\longrightarrow& \mathbb{R} \\(s^{\underline{\ell}},s^{\underline{\ell-1}},\cdots,s^{\underline{1}},t,u(t)) &\mapsto& H^{\alpha \beta}_{[\ell]}(s^{\underline{\ell}},s^{\underline{\ell-1}},\cdots,s^{\underline{1}},t,u(t)),
\end{eqnarray*}
 and there exists $T>0$ which we will fix once for all such that
$$H^{\alpha \beta}_{[\ell]}(s^{\underline{\ell}},s^{\underline{\ell-1}},\cdots,s^{\underline{1}},t,\cdot) = \left\{
 \begin{split}
  &f^{\alpha}&\indent&\text{if t}\in (-\infty, -T)\\
  &f^{\beta} &\indent&\text{if t}\in (T,+\infty).
 \end{split}\right.$$
We call such homotopy $H^{\alpha \beta}_{[\ell]}$ between the two Morse functions$f^{\alpha}$ and $f^{\beta}$ \textbf{finite} since it is nontrivial in time variable $t$ only in a finite segment$[-T,T]$. We also write $H_{[\ell]}$ instead of $H^{\alpha \beta}_{[\ell]}$ for brevity if there is no confusion, and sometimes we write $H$ replacing $H_{[0]}$. Denote $\mathbf{s}=(s^{\underline{\ell}},s^{\underline{\ell-1}},\cdots,s^{\underline{1}})$.
The followings are the concrete definitions of such homotopies we utilize.}

Define
\begin{align}
\mathcal{H}^{[0]}:=\mathcal{H}:= \Big\{ & H=H_{[0]}:\R\times M\longrightarrow \R \ smooth   \,\, \Big |\,\,
                                        \, H(t,\cdot)=f^{\alpha},\, t\in (-\infty, -T)\, ; \, H(t,\cdot)=f^{\beta},\, t\in (T,+\infty) \Big\}. \nonumber
\end{align}
Let $\ell \in \Z_{>0}$ be the number of the parameters, we define
\begin{align}
\mathcal{H}^{[\ell]}:= \Big\{ & H_{[\ell]}:[0,1]^{\ell}\times \R\times M \longrightarrow \R \ smooth ,\, (s^{\underline{\ell}},s^{\underline{\ell-1}},\cdots,s^{\underline{1}},t,\cdot)\mapsto  H_{[\ell]}(s^{\underline{\ell}},s^{\underline{\ell-1}},\cdots,s^{\underline{1}},t,\cdot)   \,\, \Big| \,\,   \nonumber \\
                              &H_{[\ell]}(0,\underbrace{\cdot,\cdots,\cdot}_{\ell-1},\cdot,\cdot)\in \mathcal{H}^{[\ell-1]}, \ H_{[\ell]}(1,\underbrace{\cdot,\cdots,\cdot}_{\ell-1},\cdot,\cdot)\in \mathcal{H}^{[\ell-1]}; \nonumber \\
                              &H_{[\ell]}(\cdot,0,\underbrace{\cdot,\cdots,\cdot}_{\ell-2},\cdot,\cdot)=H_{[\ell]}(0,0,\underbrace{\cdot,\cdots,\cdot}_{\ell-2},\cdot,\cdot)\in \mathcal{H}^{[\ell-2]}, \ H_{[\ell]}(\cdot,1,\underbrace{\cdot,\cdots,\cdot}_{\ell-2},\cdot,\cdot)=H_{[\ell]}(0,1,\underbrace{\cdot,\cdots,\cdot}_{\ell-2},\cdot,\cdot)\in \mathcal{H}^{[\ell-2]}; \nonumber \\
                              &\cdots; \nonumber \\
                              &H_{[\ell]}(\underbrace{\cdot,\cdots,\cdot}_{\ell-1},0,\cdot,\cdot)=H_{[\ell]}(\underbrace{0,\cdots,0}_{\ell-1},0,\cdot,\cdot)\in \mathcal{H}^{[0]}, \ H_{[\ell]}(\underbrace{\cdot,\cdots,\cdot}_{\ell-1},1,\cdot,\cdot)=H_{[\ell]}(\underbrace{0,\cdots,0}_{\ell-1},1,\cdot,\cdot)\in \mathcal{H}^{[0]}; \nonumber \\
                              &H_{[\ell]}(\underbrace{\cdot,\cdots,\cdot}_{\ell},t,\cdot)=H_{[\ell]}(\underbrace{0,\cdots,0}_{\ell},t,\cdot)=f^{\alpha},\, t\in (-\infty, -T) ;\  \nonumber \\ &H_{[\ell]}(\underbrace{\cdot,\cdots,\cdot}_{\ell},t,\cdot)=H_{[\ell]}(\underbrace{0,\cdots,0}_{\ell},t,\cdot)=f^{\beta},\,\, t\in (T,+\infty)   \Big\}. \nonumber
\end{align}
%高阶的时候不同的s，\varepsilon可以是一致的，因为s所在[0,1]是紧的！
Such an element will be called a higher homotopy between Morse functions $f^\alpha$ and $f^\beta$.

%\textcolor{red}{Let us take the examples of $H,H_{[1]},H_{[2]}$. Figure \ref{parameter space} gives a rough description of them. }
%We can see that in the parameter and variable space (compare the third picture in Figure \ref{parameter space}) the left face and the right face degenerate to two points. Meanwhile, the front face and the back face degenerate to two segments. And finally the map $H_{[2]}$ describes a family of functions parametrized by a solid sphere.

Similarly, we define $\widetilde{\mathcal{G}}^{[0]}:=\widetilde{\mathcal{G}_{g_0}}$, and as above we sometimes write $\widetilde{\mathcal{G}}^{[0]}$ as $\widetilde{\mathcal{G}}$ for brevity. In general, for $\ell\in \mathbb{Z}_{\geqslant 1}$, let
\begin{align}
\widetilde{\mathcal{G}}^{[\ell]}:=\Big\{ & G_{[\ell]}:[0,1]^{\ell}\times \R\longrightarrow \mathcal{G}_{g_{0}},\  (s^{\underline{\ell}},s^{\underline{\ell-1}},\cdots,s^{\underline{1}},t)\mapsto  G_{[\ell]}(s^{\underline{\ell}},s^{\underline{\ell-1}},\cdots,s^{\underline{1}},t)    \Big|  \nonumber \\
                              &G_{[\ell]}\ depend\ on\ parameters\ and\ variable\ smoothly;  \nonumber \\
                              &G_{[\ell]}(0,\underbrace{\cdot,\cdots,\cdot}_{\ell-1},\cdot)\in \widetilde{\mathcal{G}}^{[\ell-1]}, \  G_{[\ell]}(1,\underbrace{\cdot,\cdots,\cdot}_{\ell-1},\cdot)\in \widetilde{\mathcal{G}}^{[\ell-1]}; \nonumber \\
                              &G_{[\ell]}(\cdot,0,\underbrace{\cdot,\cdots,\cdot}_{\ell-2},\cdot)=G_{[\ell]}(0,0,\underbrace{\cdot,\cdots,\cdot}_{\ell-2},\cdot)\in \widetilde{\mathcal{G}}^{[\ell-2]},\  G_{[\ell]}(\cdot,1,\underbrace{\cdot,\cdots,\cdot}_{\ell-2},\cdot)=G_{[\ell]}(0,1,\underbrace{\cdot,\cdots,\cdot}_{\ell-2},\cdot)\in \widetilde{\mathcal{G}}^{[\ell-2]}; \nonumber \\
                              &\cdots; \nonumber \\
                              &G_{[\ell]}(\underbrace{\cdot,\cdots,\cdot}_{\ell-1},0,\cdot)=G_{[\ell]}(\underbrace{0,\cdots,0}_{\ell-1},0,\cdot)\in \widetilde{\mathcal{G}}^{[0]}, \  G_{[\ell]}(\underbrace{\cdot,\cdots,\cdot}_{\ell-1},1,\cdot)=G_{[\ell]}(\underbrace{0,\cdots,0}_{\ell-1},1,\cdot)\in \widetilde{\mathcal{G}}^{[0]}; \nonumber \\
                              &G_{[\ell]}(\underbrace{\cdot,\cdots,\cdot}_{\ell},t)=G_{[\ell]}(\underbrace{0,\cdots,0}_{\ell},t)=g^{\alpha},\,\,  t\in (-\infty, -T)  ;\nonumber \\
                              &G_{[\ell]}(\underbrace{\cdot,\cdots,\cdot}_{\ell},t)=G_{[\ell]}(\underbrace{0,\cdots,0}_{\ell},t)=g^{\beta},\,\,t\in (T,+\infty)  \Big\}. \nonumber
\end{align}

Note that $\widetilde{\mathcal{G}}^{[\ell]}$ depends on the choice of the Riemannian metric $g_{0}$.
%where $\mathcal{G}_{g_{0}}$ is defined in the previous subsection and roughly speaking, it is the set of all Riemannian metric on $\MM$.\\
Such an element in the above set will be called a higher homotopy between Riemannian metrics $g^\alpha$ and $g^\beta$.
Figure \ref{parameter space} describes the family of pairs (function, Riemannian metric) with parameter degeneration.

We take $\mathcal{H}^{[\ell]}$ as an example. \\
(a) shows a family $H$ with one-variable $t$ from $f^{\alpha}$ to $f^{\beta}$. \\
(b) is a family with one-parameter $s$ and one variable $t$ such that
when $t\in (-\infty, -T)$, $H_{[1]}(\cdot,t,\cdot)$ (left segment) degenerate to $f^{\alpha}$; $t\in (T, +\infty)$, $H(\cdot,t,\cdot)$ (right segment) degenerate to $f^{\beta}$. \\
%when $t=0,1$, $H_{[1]}(\cdot,0,\cdot)$ (left segment) and $H(\cdot,1,\cdot)$ (right segment) degenerate to $f^{\alpha}$ and $f^{\beta}$ respectively.
Without lose the generality, we suppose $t\in [-T-\varepsilon, T+\varepsilon]$, when $t=-T-\varepsilon,T+\varepsilon$, $H_{[1]}(\cdot,-T-\varepsilon,\cdot)$ (left segment) and $H(\cdot,T+\varepsilon,\cdot)$ (right segment) degenerate to $f^{\alpha}$ and $f^{\beta}$ respectively.\\
(c) reveals a family $H_{[2]}$ with two-parameter $s^{\underline{2}},\,  s^{\underline{1}}$ and one variable $t$ such that $H_{[2]}(\cdot,\cdot,-T-\varepsilon,\cdot)$ (left face), $H_{[2]}(\cdot,\cdot,T+\varepsilon,\cdot)$ (right face) degenerate to two points;  $H_{[2]}(\cdot,0,\cdot,\cdot)$ (front face), $H_{[2]}(\cdot,1,\cdot,\cdot)$ (back face) degenerate to two segments. As the number of parameters increases, $H_{[\ell]}$ has the analogous degeneration.  $\widetilde{\mathcal{G}}^{[\ell]}$ is also similar.

We will see this is a geometric reminiscence of higher morphisms in higher categories to be discussed in \S \ref{strictcat}. %\textcolor{red}{What's more, the figure of parameter and variable space restricted to interval of t still need to use $[0,1]$ as above.}

Given two MS pairs $(f^{\alpha},g^{\alpha})$ and $(f^{\beta},g^{\beta})$, a higher homotopy $(H_{[\ell]},G_{[\ell]})$ between $(f^{\alpha},g^{\alpha})$ and $(f^{\beta},g^{\beta})$ means that $H_{[\ell]}$ is a higher homotopy between $f^{\alpha}$ and $f^{\beta}$ and $G_{[\ell]}$ is a higher homotopy between $g^{\alpha}$ and $g^{\beta}$.

\begin{figure}
  \centering
  \includegraphics[scale=0.6]{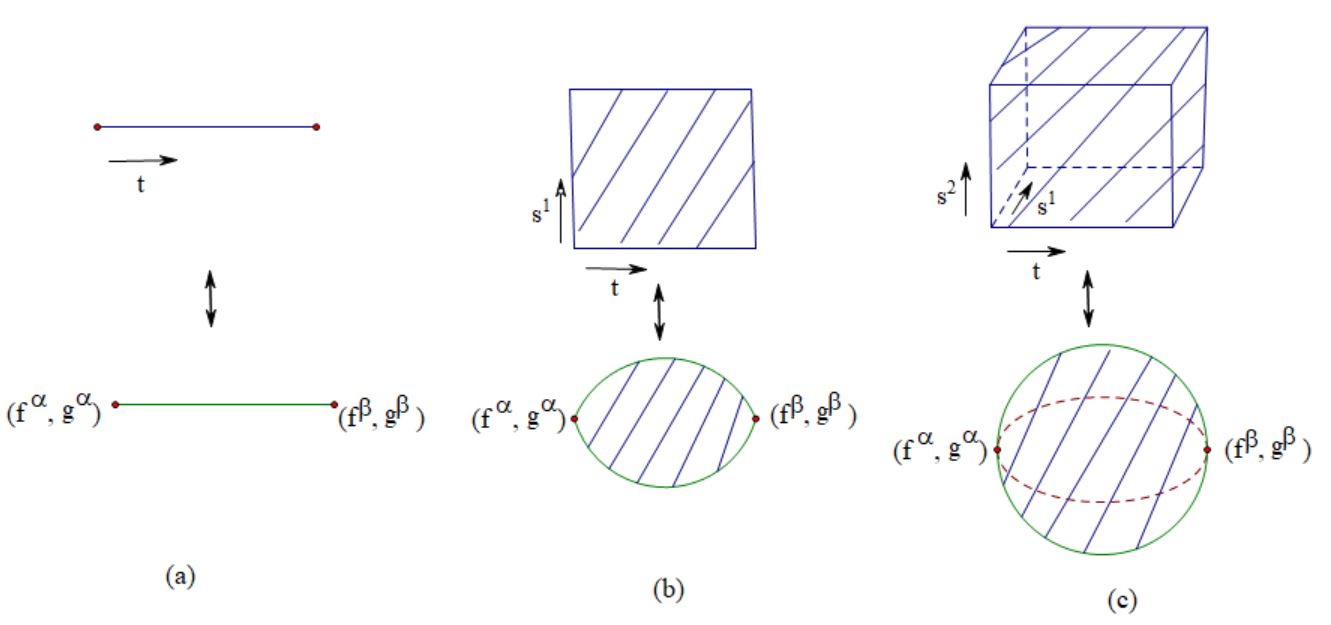}
  \caption{parameters $s^i$ v.s. variable $t$}
  \label{parameter space}
\end{figure}

Now we recall the Banach manifold $\mathcal{P}^{1,2}_{p,q}$ connecting two given points $p$ and $q$ and the fact that the linearization of the differential operator coming from the negative gradient flow equation along a solution path turns out to be a Fredholm operator.

\textbf{Banach manifold $\mathcal{P}^{1,2}_{p,q}$}

To introduce $\mathcal{P}^{1,2}_{p,q}$ briefly, here we follow \cite{eliasson} and \cite{schwarz}, and detailed properties and proofs can be found there.

Let $\overline{\R}:=\R\cup\{\pm \infty\}$ and $p,q\in M$. By $h\in C^{\infty}(\overline{\mathbb{R}}, M)$, we mean a smooth map from $\mathbb{R}$ to $M$ such that $\lim\limits_{t\rightarrow-\infty}h(t)=p$,
$\lim\limits_{t\rightarrow +\infty}h(t)=q$.  Let $\mathcal{D}\subset TM$ be a neighbourhood of zero section in $TM$. Because $\overline{\R}$ is contractible, $h^{\ast}\mathcal{D}$ is a trivial bundle. Then we let $\varphi:h^{\ast}\mathcal{D}\rightarrow \overline{\R}\times \R^{n}$ be a global smooth trivialization of $ h^{\ast}\mathcal{D}$.
For $\xi \in W^{1,2}(\overline{\mathbb{R}}, \R^{n})$ and $\xi(-\infty)=\xi(+\infty)=0$, $\exp_{h}\xi:\overline{\R}\rightarrow M$ describes a $W^{1,2}$-path connecting $p$ and $q$ near $h$. Set
%here seemingly exp depend on g (decide the connection)
$$\mathcal{P}_{p,q}^{1,2}=\Big\{exp_{h}\xi \Big| \xi\in W^{1,2}(\overline{\mathbb{R}}, \R^{n}), \xi(-\infty)=\xi(+\infty)=0 ,\forall h\in C^{\infty} \Big\}$$
%In fact, P与g没关系，只与M和p,q有关系。所以对g_{t}^{\underline{s_{n}},\cdots,\underline{s_{1}}}我们背景空间仍是P.
%soblev embed thm H^{1,2} embed to C^{0}.
Clearly it defines $\mathcal{P}_{p,q}^{1,2}$ with a covering which is responsible for its Banach manifold structure. The details can be found in \cite{schwarz} p. 212.

\begin{theorem}(\cite{schwarz}, p. 25)
The set $\mathcal{P}_{p,q}^{1,2}$ is an infinitely dimensional $C^{\infty}$ Banach manifold. Furthermore, it is independent of the choice of the Riemannian metric $g$.
\end{theorem}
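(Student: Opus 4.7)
The plan is to equip $\mathcal{P}_{p,q}^{1,2}$ with an atlas indexed by the smooth reference paths $h\in C^{\infty}(\overline{\mathbb{R}},M)$ from $p$ to $q$, verify that overlaps are smooth, and then argue that the differential structure is intrinsic to the path space rather than an artifact of the auxiliary metric. The Banach model is the linear space
$$W^{1,2}_{0}(\overline{\mathbb{R}},\mathbb{R}^{n}):=\big\{\xi\in W^{1,2}(\overline{\mathbb{R}},\mathbb{R}^{n})\,\big|\,\xi(-\infty)=\xi(+\infty)=0\big\},$$
which is a Banach (indeed Hilbert) space. In dimension one the critical embedding $W^{1,2}(\mathbb{R})\hookrightarrow C^{0}(\overline{\mathbb{R}})$ with vanishing at $\pm\infty$ will be used repeatedly.

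First I would fix, once and for all, an open neighbourhood $\mathcal{D}\subset TM$ of the zero section on which $\exp^{g}:\mathcal{D}\to M$ is a fibrewise diffeomorphism onto its image. For each reference path $h$, contractibility of $\overline{\mathbb{R}}$ trivializes $h^{\ast}\mathcal{D}$ via $\varphi$, and one defines
$$U_{h}:=\big\{\xi\in W^{1,2}_{0}(\overline{\mathbb{R}},\mathbb{R}^{n})\,\big|\,\varphi^{-1}(t,\xi(t))\in h^{\ast}\mathcal{D}\text{ for all }t\big\},\quad \Phi_{h}(\xi):=\exp_{h(\cdot)}\!\bigl(\varphi^{-1}(\cdot,\xi(\cdot))\bigr).$$
Using the embedding $W^{1,2}_{0}\hookrightarrow C^{0}$ one checks that $U_{h}$ is open in $W^{1,2}_{0}$ and that $\Phi_{h}$ lands bijectively onto the $W^{1,2}$-paths from $p$ to $q$ lying in a pointwise $\mathcal{D}$-neighbourhood of $h$. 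The union $\bigcup_{h}\Phi_{h}(U_{h})$ covers $\mathcal{P}_{p,q}^{1,2}$ because every $W^{1,2}$-path can be $C^{0}$-approximated (hence pointwise contained in $\mathcal{D}$) by some smooth $h$ with the prescribed asymptotics.

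For the smooth atlas condition, the overlap $\Phi_{h_{2}}^{-1}\circ\Phi_{h_{1}}$ has the form $\xi\mapsto F(\cdot,\xi(\cdot))$, where $F:\overline{\mathbb{R}}\times\mathbb{R}^{n}\dashrightarrow \mathbb{R}^{n}$ is the fibrewise smooth map obtained from composing $\exp_{h_{1}}$ with $\exp_{h_{2}}^{-1}$ and the two trivializations. The key analytic input is the $\Omega$-lemma in the 1D Sobolev setting: post-composition with a smooth fibrewise map sends $W^{1,2}_{0}(\overline{\mathbb{R}},\mathbb{R}^{n})$ smoothly into itself, its Fréchet derivatives being given by pointwise differentiation in the fibre variable. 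This is precisely the content worked out in Eliasson's paper and in \cite{schwarz} (p.~212), and it supplies $C^{\infty}$ transition maps, hence the Banach manifold structure.

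Finally, for the metric-independence statement: if $g_{1},g_{2}$ are two Riemannian metrics on $M$, the two exponential maps $\exp^{g_{1}},\exp^{g_{2}}$ differ on a common neighbourhood of the zero section by a fibrewise smooth diffeomorphism. Consequently, given a reference path $h$, the chart maps $\Phi_{h}^{g_{1}}$ and $\Phi_{h}^{g_{2}}$ differ by a chart change of exactly the type handled in the previous paragraph, and the same $\Omega$-lemma shows the two atlases are smoothly equivalent. The underlying set $\mathcal{P}_{p,q}^{1,2}$ — namely $W^{1,2}$ paths from $p$ to $q$ — is manifestly defined without reference to $g$ (all metrics on the compact $M$ induce equivalent Sobolev norms on sections over compact pieces of $\mathbb{R}$, and the asymptotic condition $\xi(\pm\infty)=0$ is metric-free), so the two manifold structures coincide.

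The main obstacle is the $\Omega$-lemma in Step 2: one must carefully justify that $\xi\mapsto F(\cdot,\xi(\cdot))$ is Fréchet-differentiable to all orders as a map between the Sobolev spaces involved, with continuity of derivatives in the operator topology. Everything else (chart domains being open, cover, and metric-independence) is essentially bookkeeping once this analytic core is in place; I would therefore cite the detailed verification from \cite{eliasson} and \cite{schwarz} and only highlight the structural points above.
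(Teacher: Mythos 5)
Your proposal reconstructs exactly the Eliasson/Schwarz atlas argument that the paper itself defers to (the paper only writes the chart definition and then points to \cite{schwarz}, p.~212 for the verification), so it is essentially the same approach. The one thing worth making explicit, since you flag it as the main obstacle, is that the $\Omega$-lemma application here needs the fibrewise map $F$ to have uniformly bounded derivatives on the relevant $\mathcal{D}$-tube together with the asymptotic normalization $F(\pm\infty,0)=0$; this is what keeps the post-composition operator from leaving $W^{1,2}_{0}(\overline{\mathbb{R}},\mathbb{R}^{n})$, and it is automatic here because $h_{1},h_{2}$ share the same endpoints $p,q$ and are eventually constant, but it is the precise point at which the noncompactness of $\mathbb{R}$ enters.
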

%P212 thm10 schwartz  P24
%bdl要L2主要是\dot{u}(t)只能到L2了，到不了H1,2, because u(t) is H1,2.

%加上L2 bundle!!
There are two natural Banach bundles $T\mathcal{P}_{p,q}^{1,2}$ and $\mathcal{E}$ over $\mathcal{P}_{p,q}^{1,2}$ whose fibers are $T_{u}\mathcal{P}_{p,q}^{1,2}=W^{1,2}(u^{\ast}TM)$ and $\mathcal{E}_{u}:=L^{2}(u^{\ast}TM)$ respectively for any $u \in \mathcal{P}_{p,q}^{1,2}$.
%$\mathcal{E}$ depend on $p,\ q$.

\textbf{Linearization of the gradient flow equation}

For $H_{[\ell]}:[0,1]^{\ell}\times \R \times M \rightarrow \R$ and $G_{[\ell]}$ as defined above, we consider the map
\begin{equation}
\begin{split}
F_{[\ell]} :[0,1]^{\ell} \times \mathcal{P}_{p,q}^{1,2}&\longrightarrow \mathcal{E}\\
               (s_{\ell},\cdots,s_{1},u)&\longmapsto \partial_{t}u+\nabla_{G_{[\ell]}(s_{\ell},\cdots,s_{1},t)}H_{[\ell]}(s_{\ell},\cdots,s_{1},t,u).
               \nonumber
\end{split}
\end{equation}
%!! $\partial_{t}u+\nabla_{G_{[\ell]}(s_{\ell},\cdots,s_{1},t)}H_{[\ell]}(s_{\ell},\cdots,s_{1},t)(u)$ 是u(\in \mathcal{P}_{p,q}^{1,2})处的fiber,i.e.L^{2}(u^\ast TM).后面我们令这个为0是指u处fiber 的vect space 中，这个向量为0.所以后面我们用的空间一直是\mathcal{E},哪怕加上参数。
The equation $F_{[\ell]}(s_{\ell},\cdots,s_{1},u)=\partial_{t}u+\nabla_{G_{[\ell]}(s_{\ell},\cdots,s_{1},t)}H_{[\ell]}(s_{\ell},\cdots,s_{1},t,u)=0$ is exactly the negative gradient flow equation which plays a fundamental role in our later constructions.
In the local coordinate $u_{1},\cdots,u_{n}$ on $M$ and letting $G_{[\ell]}(s_{\ell},\cdots,s_{1},t)(u)= g_{[\ell]}(s_{\ell},\cdots,s_{1},t,u)$, it has the form:
$$\partial_{t}u_{i}(t)+\sum\limits_{j=1}^{n}g^{ij}_{[\ell]}(s_{\ell},\cdots,s_{1},t,u)
\frac{\partial H_{[\ell]}(s_{\ell},\cdots,s_{1},t,u)}{\partial u_{j}}=0,\ \ i=1,\cdots,n.$$
\indent Taking the linearization along a solution $u$ , we obtain:\\
$$\partial_{t}\xi_{i}(t)
+\sum\limits_{j}\sum\limits_{k}\frac{\partial g^{ij}_{[\ell]}(s_{\ell},\cdots,s_{1},t,u)}{\partial u_{k}}\xi_{k}\frac{\partial H_{[\ell]}(s_{\ell},\cdots,s_{1},t,u)}{\partial u_{j}}
+\sum\limits_{j}\sum\limits_{k}g^{ij}_{[\ell]}(s_{\ell},\cdots,s_{1},t,u)
\frac{\partial^{2}H_{[\ell]}(s_{\ell},\cdots,s_{1},t,u)}{\partial u_{k}\partial u_{j}}\xi_{k}=0$$
%把右边0当成0算子.
Let $$A_{ik}(s_{\ell},\cdots,s_{1},t):=\sum\limits_{j}\left(\frac{\partial g^{ij}_{[\ell]}(s_{\ell},\cdots,s_{1},t,u)}{\partial u_{k}}\frac{\partial H_{[\ell]}(s_{\ell},\cdots,s_{1},t,u)}{\partial u_{j}}
+g^{ij}_{[\ell]}(s_{\ell},\cdots,s_{1},t,u)
\frac{\partial^{2}H_{[\ell]}(s_{\ell},\cdots,s_{1},t,u)}{\partial u_{k}\partial u_{j}}\right),$$
then we have
$$\partial_{t}\xi_{i}(t)+\sum\limits_{k=1}^{n}A_{ik}(s_{\ell},\cdots,s_{1},t)\xi_{k}=0,\ \ i=1,\cdots,n.$$
Let $A(-\infty):=\lim\limits_{t\rightarrow -\infty}A(s_{\ell},\cdots,s_{1},t)=\lim\limits_{t\rightarrow -\infty}\big( A_{ik}(s_{\ell},\cdots,s_{1},t)\big)_{i,k}$ and
$A(+\infty):=\lim\limits_{t\rightarrow +\infty}A(s_{\ell},\cdots,s_{1},t)=\lim\limits_{t\rightarrow +\infty}\big( A_{ik}(s_{\ell},\cdots,s_{1},t)\big)_{i,k}.$
Then $A(-\infty)$ is the Hessian of the Morse function $f^{\alpha}$ at $p$, $A(+\infty)$ is the Hessian of the Morse function $f^{\beta}$ at $q$.

\textbf{Adjoint}

Let $B(\mathbf{s},\cdot)\in C^{0}(\R, End(\R^{n}))$ with $\mathbf{s}=(s^{\underline{\ell}},\cdots,s^{\underline{1}})$ and $\lim\limits_{t\rightarrow \pm\infty}B(\mathbf{s},t)=B^{\pm}$ be non-degenerate and self-adjoint.
Let $D_{B}: W^{1,2}(\R,\R^{n})\rightarrow L^{2}(\R,\R^{n})$, $D_{B}\xi(t)=\partial_{t}\xi(t)+B(\mathbf{s},t)\cdot\xi(t)$.
If for any $w(t)\in (Im(D_{B}))^{\bot}\subset L^{2}(\R, \R^{n})$,
$$ 0 =\int_{-\infty}^{\infty}G_{[\ell]}(\mathbf{s},t,u)( D_{B}\xi(t),w(t)) dt
     =\int_{-\infty}^{\infty}G_{[\ell]}(\mathbf{s},t,u)( \partial_{t}\xi(t),w(t)) dt
   +\int_{-\infty}^{\infty}G_{[\ell]}(\mathbf{s},t,u)( B(\mathbf{s},t)\cdot\xi(t),w(t)) dt,$$
%\begin{flalign*}
% 0 & =\int_{-\infty}^{\infty}G_{[\ell]}(\mathbf{s},t,u)( D_{B}\xi(t),w(t)) dt \\
%   & =\int_{-\infty}^{\infty}G_{[\ell]}(\mathbf{s},t,u)( \partial_{t}\xi(t),w(t)) dt
%   +\int_{-\infty}^{\infty}G_{[\ell]}(\mathbf{s},t,u)( B(\mathbf{s},t)\cdot\xi(t),w(t)) dt,
%\end{flalign*}
we have $$\int_{-\infty}^{\infty}G_{[\ell]}(\mathbf{s},t,u)( \partial_{t}\xi(t),w(t)) dt=-\int_{-\infty}^{\infty}G_{[\ell]}(\mathbf{s},t,u)( B(\mathbf{s},t)\cdot\xi(t),w(t)) dt =-\int_{-\infty}^{\infty}G_{[\ell]}(\mathbf{s},t,u)( \xi(t),(B(\mathbf{s},t))^{T}w(t)) dt.$$
%\begin{flalign*}
%   & \ \int_{-\infty}^{\infty}G_{[\ell]}(\mathbf{s},t,u)( \partial_{t}\xi(t),w(t)) dt \\
%   & =-\int_{-\infty}^{\infty}G_{[\ell]}(\mathbf{s},t,u)( B(\mathbf{s},t)\cdot\xi(t),w(t)) dt\\
%   & =-\int_{-\infty}^{\infty}G_{[\ell]}(\mathbf{s},t,u)( \xi(t),(B(\mathbf{s},t))^{T}w(t)) dt
%\end{flalign*}
This means that $w(t)$ has the weak derivative $(B(\mathbf{s},t))^{T}w(t)$.
%weak derivative $\int\frac{\partial \phi}{\partial x^{i}}fdx=-\int\phi g_{i}, \forall \phi$ call $g_{i}$ is the weak derivative of f.
Therefore, $w(t)\in W^{1,2}$ and satisfies the following ODE $$\partial_{t}w(t)=(B(\mathbf{s},t))^{T}w(t),$$ then $w(t)\in KerD_{-B^{T}}$ with $-B^{T}=-(B(\mathbf{s},t))^{T}$. It
%\textcolor{red}{(The above ODE)}
will be used to study the regularity of the moduli space in later section.
% w(t) has weak derivative, so from L^{2} to H^{1,2}

\textbf{Fredholm properties}

{More details about Fredholm theory can be found in, say \cite{mcduff-salamon} (Appendix A).} In our case, let $F_{A}\xi(t)=\dot{\xi}(t)+A(t)\cdot\xi(t)$,
where $\xi\in W^{1,2}(\mathbb{R},\mathbb{R}^{n})$, %H^{1,2}(\overline{\mathbb{R}},H^{1,2}(\mathbb{R},\mathbb{R}^{n})\times\mathbb{R}^{n})
and $A(t)\in GL(n,\mathbb{R}),\forall t\in \overline{\mathbb{R}}$ which depends on $t$ continuously. Define $ind A(t)$ to be the negative index of inertia of matrix $A(t)$. And $A(\pm\infty)$ is the Hessian of the corresponding Morse function at its associated critical points which is well known to be self-adjoint, and its index is just the Morse index of the critical point.
%If we consider $A(-\infty)$ and $A(\infty)$, $ind A(-\infty)$ and $ind A(\infty)$ means the Hessian of the matrixes of the associated critical points. If $p\in Crf$ ,the Hessian of f at $p$ is $ind A(-\infty)$ and $q\in Crf$, the Hessian of f at $q$ is $ind A(\infty)$.

\begin{proposition}
$F_{A}$ is a Fredholm operator with index $ind F_{A}=ind A(-\infty)-ind A(+\infty)$. Moreover, $F_{[\ell]A}\xi(t)=\dot{\xi}(t)+A(\mathbf{s},t)\cdot\xi(t)$ is also Fredholm with index $indF_{[\ell]A}=ind A(-\infty)-ind A(+\infty)+\ell$ for any $\ell\geq 1$.
\end{proposition}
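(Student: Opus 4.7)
The plan is to reduce both assertions to the classical Fredholm theorem for asymptotic operators on the line. First I would verify that $F_A\colon W^{1,2}(\R,\R^n) \to L^2(\R,\R^n)$ is Fredholm by exploiting the fact that the limiting endomorphisms $A(\pm\infty)$ are Hessians of Morse functions at non-degenerate critical points, hence non-degenerate self-adjoint and in particular hyperbolic. This non-degeneracy produces an exponential dichotomy near $\pm\infty$: outside a large enough compact interval, standard weighted $L^2$ estimates show that $\partial_t + A(t)$ is invertible, while on the compact part it differs from the asymptotic model by a compact operator. Assembling these pieces gives a bounded parametrix modulo compact operators, which is the standard criterion for Fredholmness used in \cite{schwarz} and \cite{mcduff-salamon}.

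Next I would compute the index. By homotopy invariance of the Fredholm index under continuous paths of asymptotic operators with fixed limits, I can deform $A(t)$ to any convenient model, for instance one that is constant equal to $A(-\infty)$ for $t \leqslant -1$, constant equal to $A(+\infty)$ for $t \geqslant 1$, and interpolates smoothly in between. In this model the $W^{1,2}$ kernel of $F_A$ is characterized by its asymptotic data: solutions of $\dot{\xi} = -A(t)\xi$ that decay as $t \to -\infty$ must lie in the negative eigenspace of $A(-\infty)$ at that end, while decay as $t \to +\infty$ forces them into the positive eigenspace of $A(+\infty)$. Using the adjoint computation already sketched just above the proposition, the cokernel is identified with the $W^{1,2}$ kernel of $\partial_t - A(t)^T$, controlled by the opposite pair of eigenspaces. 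A direct count in a transverse model, extended to the general case by a spectral flow argument, yields $ind\, F_A = ind\, A(-\infty) - ind\, A(+\infty)$.

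For the parametrized operator, the natural move is to view $F_{[\ell]A}$ as the full linearization of the flow equation at a solution on the extended domain $W^{1,2}(\R,\R^n) \oplus \R^\ell$, where the $\R^\ell$ factor comes from interior variations of the parameters $\mathbf{s} = (s^{\underline{\ell}},\ldots,s^{\underline{1}}) \in [0,1]^\ell$ built into $\widehat{F}_{[\ell]}$. Writing this linearization as $(\xi,\mathbf{v}) \mapsto F_A \xi + L\mathbf{v}$, where $L\colon \R^\ell \to L^2(\R,\R^n)$ is the partial derivative of the defining map in the parameter directions, the new operator differs from $F_A \oplus 0$ by a finite-rank, hence compact, term. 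Fredholmness is inherited from the unparametrized case, and index additivity for the direct sum of a Fredholm operator with the zero operator on a finite-dimensional space gives $ind\, F_{[\ell]A} = ind\, F_A + \ell$.

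The main obstacle I anticipate is the middle step, namely the clean identification of the analytic index with the spectral-theoretic quantity $ind\, A(-\infty) - ind\, A(+\infty)$. Handling this rigorously requires either an explicit eigenspace-intersection calculation in a transverse model combined with a spectral flow computation to cover the non-transverse case, or a direct appeal to the Robbin--Salamon theorem. I would follow the model-case route so as to keep the argument self-contained and close in spirit to the analytic framework of \cite{schwarz} that the paper has been tracking throughout; the remaining ingredients (Fredholmness outside a compact set and the parameter-space extension) are essentially formal once the limiting matrices are known to be hyperbolic.
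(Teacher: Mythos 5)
Your proposal is correct and follows essentially the same route the paper takes: the paper's own justification consists of citing Schwarz (p.~38) for the unparametrized case — relying on the exponential decay of solutions at $t\to\pm\infty$ to establish Fredholmness and compute $\mathrm{ind}\,A(-\infty)-\mathrm{ind}\,A(+\infty)$ — and then attributing the additional $\ell$ to the $\ell$ parameter directions, precisely as you do with the compact (finite-rank) perturbation of $F_A\oplus 0$ on the extended domain $W^{1,2}(\R,\R^n)\oplus\R^{\ell}$. Your reconstruction simply makes explicit the two steps the paper leaves to the reader.
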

The proof can be adapted from \cite{schwarz} (p. 38). In the proof, a key role is played by the exponential decay of the solution when $t\rightarrow \pm \infty$.  One may notice that for $indF_{[\ell]A}$, the index increases by $\ell$ which is basically coming from the number of parameters. %and we will give a proof of this fact in more general setting as in Theorem \ref{regularthm1}.

\textbf{Regularity and smoothness}

Next, we want to prove that the moduli space of gradient flow trajectories is smooth with expected dimension under regular conditions. We fix two MS pairs $(f^{\alpha},g^{\alpha})$ and $(f^{\beta},g^{\beta})$ once and for all, $p\in Cr f^{\alpha}$, $q\in Cr f^{\beta}$.
Let's start with a useful proposition.

\begin{proposition}\label{usefulprop} (\cite{schwarz}, p. 42)
Let $\mathcal{G}$ and $\mathcal{P}$ be two Banach manifolds, and $E$ be a Banach bundle over $\mathcal{P}$ with fiber $\mathbb{E}$.
Let $\Phi:\mathcal{G}\times \mathcal{P}\longrightarrow E$ be a smooth map  with $\Phi_{g}(\cdot):=\Phi(g,\cdot)$ ($\forall g\in\mathcal{G}$) smooth section of the bundle ${E}$ satisfying the following:
there is a countable trivialization $\{(U,\psi)|\, U\subset \mathcal{P},\, \psi :E|_{U}\cong U\times \mathbb{E}\}$ such that for any $(U,\psi)$,
%separable ban manifolds

(i) $0$ is a regular value of $pr_{2}\circ \psi \circ \Phi:\mathcal{G}\times U\rightarrow \mathbb{E}$,

(ii) $pr_{2}\circ \psi \circ \Phi_{g}: U\rightarrow \mathbb{E}$ is a Fredholm map of index r.

Then there exists a Baire category set $\Sigma\subset\mathcal{G}$ such that for any $g\in\Sigma$, $\Phi_{g}^{-1}(0)$ is a smooth manifold and its dimension is the Fredholm index r. %by Fredholm section ind\Phi_{g}= ind p-ind q.
An element in $\Sigma$ will be called a generic element in $\mathcal{G}$.
\end{proposition}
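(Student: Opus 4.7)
The plan is to recognize this as a version of the Sard--Smale transversality theorem and to reduce it to the classical statement via the countable trivialization. First I would work inside a single trivializing chart $(U,\psi)$ and set $\Psi := pr_{2}\circ\psi\circ\Phi : \mathcal{G}\times U \to \mathbb{E}$. Hypothesis (i) says that $0$ is a regular value of $\Psi$, so the implicit function theorem on Banach manifolds implies that the ``universal'' zero locus
\[
\mathcal{Z}_U := \Psi^{-1}(0) \subset \mathcal{G}\times U
\]
is a smooth Banach submanifold, whose tangent space at $(g,u)$ is the kernel of $D\Psi(g,u)$.

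Next I would analyze the natural projection $\pi_U : \mathcal{Z}_U \to \mathcal{G}$, $(g,u)\mapsto g$. A short linear-algebra computation identifies $\ker D\pi_U(g,u)$ with $\ker D\Psi_g(u)$ and $\operatorname{coker} D\pi_U(g,u)$ with $\operatorname{coker} D\Psi_g(u)$, using the surjectivity of $D\Psi(g,u)$ from (i). By hypothesis (ii), $D\Psi_g(u)$ is Fredholm of index $r$, so $D\pi_U$ is Fredholm of index $r$ at every point of $\mathcal{Z}_U$. At this point I would invoke the Sard--Smale theorem, which applies provided the Banach manifolds in question are $C^k$ for $k>\max(0,r)$ and second countable; this yields a residual (hence Baire category) subset $\Sigma_U \subset \mathcal{G}$ of regular values of $\pi_U$.

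For $g\in\Sigma_U$, the fiber $\pi_U^{-1}(g) = \{u\in U \mid \Phi_g(u)=0\}$ is, by the implicit function theorem applied to $\pi_U$, a smooth manifold of dimension equal to the Fredholm index $r$. Finally I would take
\[
\Sigma := \bigcap_{(U,\psi)} \Sigma_U,
\]
intersected over the \emph{countable} trivializing atlas supplied in the hypothesis. A countable intersection of residual sets in a Baire space is residual, so $\Sigma$ is a Baire category subset of $\mathcal{G}$. For $g\in\Sigma$, the local pieces $\Phi_g^{-1}(0)\cap U$ are smooth $r$-dimensional manifolds that patch together along chart overlaps (the smooth structure is independent of the chart because it is intrinsically determined by $\ker D\Phi_g$), giving the global smooth $r$-manifold structure on $\Phi_g^{-1}(0)$.

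The main obstacle I expect is verifying the technical prerequisites of Sard--Smale: the manifold $\mathcal{G}$ must be second countable and $\Phi$ must be sufficiently differentiable (at least $C^{\max(1,r+1)}$) for the Sard--Smale regularity conclusion to hold, and one must check that the pointwise Fredholm property of $D\Psi_g$ is inherited by $D\pi_U$ uniformly in a way compatible with the trivializations. Once the cokernel/kernel identification between $D\Psi_g(u)$ and $D\pi_U(g,u)$ is established cleanly (this is the classical ``parametric transversality'' lemma), the rest is bookkeeping with countable intersections and local-to-global patching.
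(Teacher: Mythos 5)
Your proof is correct and follows the standard Sard--Smale parametric transversality argument: form the universal zero set $\mathcal{Z}_U=\Psi^{-1}(0)$, show the projection to $\mathcal{G}$ is Fredholm of index $r$ with $\ker/\operatorname{coker}$ identified with those of $D\Psi_g$, invoke Sard--Smale for a residual set of regular values, and take the countable intersection over the given atlas. The paper does not reproduce a proof but simply cites Schwarz (p.~42), and your argument is precisely the one given there, including the correct handling of the countable trivialization and the Baire-category intersection.
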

Here a Baire category set means a countable intersection of open dense sets which turns out to be dense as well.
In our case the Baire set appears as follows (compare \cite{frauenfelder-nicholls}):
%下面说generic metrics怎么找
Let $\mathcal{G}$ be $\widetilde{\mathcal{G}}^{[\ell]}$ and $\mathcal{P}$ be $[0,1]^{\ell}\times\mathcal{P}_{p,q}^{1,2}$.
%可能\widetilde{\mathcal{G}}^{[l]}_{g_{0}}不全
We define the following map:
\begin{equation}\label{Fhat}
\begin{split}
\widehat{F}_{[\ell]}:\widetilde{\mathcal{G}}^{[\ell]}\times [0,1]^{\ell}\times \mathcal{P}^{1,2}_{p,q}&\longrightarrow {pr^{\ast}}\mathcal{E}\\
(G_{[\ell]},s^{\underline{\ell}},\cdots,s^{\underline{1}},u)&\longmapsto
\dot{u}(t)+\nabla_{G_{[\ell]}(s^{\underline{\ell}},\cdots,s^{\underline{1}})}H_{[\ell]}(s^{\underline{\ell}},\cdots,s^{\underline{1}},t,u(t)),
\end{split}
\end{equation}
where $pr:[0,1]^{\ell}\times \mathcal{P}^{1,2}_{p,q}\longrightarrow \mathcal{P}^{1,2}_{p,q}$ is the projection map.
In fact, $\widehat{F}_{[\ell]}$ corresponds to $\Phi$ in Proposition \ref{usefulprop}. % plays the role as $\Phi$...
{For consistency, we rewrite $\mathcal{G}_{g_{0}}$ as $\widetilde{\mathcal{G}}^{[-1]}$, and
\begin{equation}
\begin{split}
\widehat{F}:\widetilde{\mathcal{G}}^{[-1]}\times \mathcal{P}^{1,2}_{p,q}&\longrightarrow \mathcal{E}\\
(g,u)&\longmapsto
\dot{u}(t)+\nabla_{g}f(u(t)), \nonumber
\end{split}
\end{equation}
with $f$ the fixed Morse function on $M$.  }\\
\indent Let $\Pi: \widehat{F}_{[\ell]}^{-1}(0)\rightarrow \widetilde{\mathcal{G}}^{[\ell]}$ be the projection to $\widetilde{\mathcal{G}}^{[\ell]}$ component, and $\widetilde{\Sigma}_{[\ell]}:=\big\{ G_{[\ell]}\in \widetilde{\mathcal{G}}^{[\ell]}  \big| G_{[\ell]}$ is a regular value of $\Pi \big\}$. By Sard's theorem(\cite{mcduff-salamon}, Appendix, p. 547), we know $\widetilde{\Sigma}_{[\ell]}$ is a Baire category set and it is called a {\it generic set}. In this sense, we call $G_{[\ell]}$ is {\it generic} with respect to $H_{[\ell]}$.
Sometimes we write $\widetilde{\Sigma}_{[0]}=\widetilde{\Sigma}$ for brevity. And note that  $\widetilde{\Sigma}_{[\ell]}$ depends on the critical points $p,q$ and $H_{[\ell]}$.
%and $\Sigma$ represents the generic set in $\mathcal{G}$. From the progress, we are able to find that $\widetilde{\Sigma}_{[\ell]}$ depends on the critical points $p,q$ and $H_{[\ell]}$.

\begin{theorem}\label{regularthm}
Let $f$ be a Morse function on $M$. For a generic Riemannian metric $g\in\Sigma$ ,
%the moduli space $\mathcal{M}^{f}_{p,q}$ is a smooth submanifold of $\mathcal{P}^{1,2}_{p,q}$ of dimension $indp-indq-1$.
the moduli space $\mathcal{\widetilde{M}}^{f}_{p,q}:=F^{-1}_{g}(0)$ is a smooth submanifold of $\mathcal{P}^{1,2}_{p,q}$ of dimension ($indp-indq$) {with $F_{g}=\widehat{F}(g,\cdot)$, and the {\bf reduced} moduli space $\mathcal{M}^{f}_{p,q}=\mathcal{\widetilde{M}}^{f}_{p,q}/\R$ is a smooth submanifold of $\mathcal{P}^{1,2}_{p,q}$ of dimension ($indp-indq-1$) due to the time translation invariance.}
\end{theorem}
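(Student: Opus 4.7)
The plan is to deduce Theorem~\ref{regularthm} from Proposition~\ref{usefulprop} applied with $\mathcal{G}=\widetilde{\mathcal{G}}^{[-1]}=\mathcal{G}_{g_{0}}$, $\mathcal{P}=\mathcal{P}^{1,2}_{p,q}$, $E=\mathcal{E}$, and $\Phi=\widehat{F}$. Two hypotheses must be checked: (i) for each fixed $g$, the map $F_{g}=\widehat{F}(g,\cdot)$ is Fredholm, and (ii) $0$ is a regular value of the full map $\widehat{F}$. Once these are established, Sard--Smale produces a Baire set $\Sigma\subset\mathcal{G}_{g_{0}}$ such that $F_{g}^{-1}(0)$ is a smooth manifold of dimension equal to the Fredholm index. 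The quotient step then follows from the freeness of the $\R$-action on non-constant trajectories.

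First I would verify the Fredholm property. For $u\in F_{g}^{-1}(0)$, the linearization $D_{u}F_{g}:W^{1,2}(u^{\ast}TM)\to L^{2}(u^{\ast}TM)$ takes, in a trivialization along $u$, the form $\xi\mapsto\partial_{t}\xi+A(t)\xi$, where $A(\pm\infty)$ are the Hessians of $f$ at $p$ and $q$. By the preceding Fredholm proposition (the $\ell=0$ case), $D_{u}F_{g}$ is Fredholm of index $\mathrm{ind}\,A(-\infty)-\mathrm{ind}\,A(+\infty)=\mathrm{ind}(p)-\mathrm{ind}(q)$, giving condition (ii) of Proposition~\ref{usefulprop} with $r=\mathrm{ind}(p)-\mathrm{ind}(q)$.

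The heart of the argument, and the step I expect to be the main obstacle, is the surjectivity of the total linearization
\[
D\widehat{F}_{(g,u)}(G,\xi)=D_{u}F_{g}(\xi)+\bigl(D_{g}(\nabla_{g}f)\cdot G\bigr)(u).
\]
Since $D_{u}F_{g}$ already has closed range of finite codimension, it suffices to show the image is dense in $L^{2}(u^{\ast}TM)$. Suppose $\eta\in L^{2}(u^{\ast}TM)$ pairs to zero with the entire image. Pairing against $D_{u}F_{g}(\xi)$ and using the adjoint discussion of the text, $\eta$ is a $W^{1,2}$ solution of the transposed ODE $\partial_{t}\eta=-A(t)^{T}\eta$, hence is smooth and, being a solution of a linear ODE, enjoys unique continuation: if $\eta(t_{0})=0$ for any $t_{0}$ then $\eta\equiv0$. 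The remaining input is that pairing against metric variations forces $\eta$ to vanish somewhere. Here I would exploit the fact that $u$ is non-constant between $p$ and $q$, so $\nabla_{g}f(u(t))\neq 0$ for $t$ outside a discrete set; differentiating $\langle\nabla_{g}f,Y\rangle_{g}=df(Y)$ with respect to $g$, one checks that on a short interval around a regular time $t_{0}$ one can prescribe $(D_{g}(\nabla_{g}f)\cdot G)(u(t))$ to take any preassigned smooth value supported in a small tube around $u(t_{0})$. Choosing such $G$ localized so that the resulting section points along $\eta(t_{0})$ forces $\int\langle\eta,\cdot\rangle\,dt\neq 0$, a contradiction; hence $\eta(t_{0})=0$, and unique continuation yields $\eta\equiv 0$.

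Once condition (i) is verified, Proposition~\ref{usefulprop} produces the Baire set $\Sigma\subset\mathcal{G}_{g_{0}}$ and the smooth manifold structure on $\widetilde{\mathcal{M}}^{f}_{p,q}=F_{g}^{-1}(0)$ of dimension $\mathrm{ind}(p)-\mathrm{ind}(q)$ for every $g\in\Sigma$. For the reduced moduli space I would note that on any non-constant trajectory $\dot{u}=-\nabla f(u)$ is nowhere vanishing, so the $\R$-action by time-translation is free; standard arguments (properness from exponential convergence to the endpoints together with local slice via a transverse hyperplane, e.g.\ fixing $\langle u(0)-u_{0}(0),\dot{u}_{0}(0)\rangle=0$) make it a smooth free and proper action, so $\mathcal{M}^{f}_{p,q}=\widetilde{\mathcal{M}}^{f}_{p,q}/\R$ is a smooth manifold of dimension $\mathrm{ind}(p)-\mathrm{ind}(q)-1$, completing the proof.
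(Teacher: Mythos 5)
Your proof is correct and follows the same Sard--Smale transversality approach that the paper relies on (it simply cites \cite{frauenfelder-nicholls}, p.~76--77, and \cite{schwarz} for the details). One cosmetic remark: from the paper's ``Adjoint'' paragraph, an $\eta$ annihilating $\mathrm{Im}\,D_uF_g$ satisfies $\partial_t\eta = A(t)^T\eta$ rather than $\partial_t\eta = -A(t)^T\eta$, but this sign does not affect your unique-continuation argument or the conclusion.
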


 Now this is a standard fact in Morse theory, and \cite{frauenfelder-nicholls} (p. 76-77) has a nice proof (See also \S 2).

{\bf A reminder about the notations}: later on the homotopies between  Morse functions will depend on the time variable $t$ and extra parameters $\mathbf{s}$, to lighten the notations we will signify the function involved at the superscript of the corresponding moduli spaces of gradient flow lines without using "widetilde" as in the above theorem. Hope that it will not gear confusion.

\begin{definition}\textbf{(regular)}
 A homotopy $H:=H_{[0]}(\cdot,\cdot):=H^{\alpha \beta}_{[0]}(\cdot,\cdot)$ is called {\it regular} if for any $\mathbb{R}$-critical point $u_{0}$ of $H_{[0]}$ (i.e., $\nabla H_{[0]}(t,u_{0})=0$, for all $t\in \mathbb{R}$), $\frac{\partial}{\partial t}+Hessian(H_{[0]}(t))(u_{0}): W^{1,2}(u_{0}^{\ast}TM)\longrightarrow L^{2}(u_{0}^{\ast}TM)$ is onto.\\ %H^{2}H_{[0]}(t,u_{0})
 \indent Note that $Hessian(H_{[0]}(t))(u_{0})$ is the Hessian of $H_{[0]}$ at $u_{0}$ which is metric $G_{[0]}$-dependent. Hence it is reasonable to say that $H_{[0]}$ is regular under $G_{[0]}$. Additionally, $\mathbb{R}$-critical point $u_{0}$ is actually a constant trajectory.
\end{definition}
%in fact, regular is a generalization of morse, when we say a function is morse, we mean it is under a Rimannian metric, for the Hessian is metric dependent.

\begin{remark}
$1.$ There always exists regular homotopy $H^{\alpha \beta}_{[0]}(\cdot,\cdot)$ between the given Morse functions $f^{\alpha}$ and $f^{\beta}$. The idea (\cite{schwarz}, p. 46)  is that if there exists a non-regular $\R$-critical point $u_{1}$, one can always construct a function $k_{1,t} \in C^{\infty}(\R\times M, \R)$ supporting on $u_{1}$ and $dk_{1,t}(u_{1})\neq 0$. We can modify the original non-regular homotopy at finitely many such non-regular $\R$-critical points $(u_{j})$ by adding corresponding $k_{j,t}$s to make it regular.

$2.$ The regularity is an open condition.
% regular is topo open;  (Topo generic: open;   Analysis generic: under measure dense  ) the Hessian onto means it has a non-degenerate sub-determinant, a subtle perturbation still make it non-degen.  Open condition make G still a Banach manifold
\end{remark}

\begin{theorem}\label{regularthm0}
%Assume that $f^{\alpha}$ and $f^{\beta}$ are two Morse functions on $M$ with $g^{\alpha}$ and $g^{\beta}$ the associated Riemannian metrics respectively.
Assume that $(f^{\alpha},g^{\alpha})$ and $(f^{\beta},g^{\beta})$ are two MS pairs.
Then there exists a generic  interpolation homotopy $(H_{[0]},G_{[0]})$ between pairs $(f^{\alpha},g^{\alpha})$ and $(f^{\beta},g^{\beta})$ such that the moduli space $\mathcal{M}^{H_{[0]}}_{p,q}:=F_{G_{[0]}}^{-1}(0) $ is a smooth submanifold of $\mathcal{P}^{1,2}_{p,q}$ of dimension $ind p- ind q$, where $F_{G_{[0]}}=F_{[0]G_{[0]}}=\widehat{F}_{[0]}(G_{[0]},\cdot)=\widehat{F}(G_{[0]},\cdot)$.
 %=(i.e. $0$ is a regular value of $F_{G_{[0]}}^{-1}$), and , where $F_{G_{[0]}}=\widehat{F}_{[0]}(G_{[0]},\cdot)$.\\
In other word, for generic $G_{[0]}\in\widetilde{\Sigma}$, the moduli space $\mathcal{M}^{H_{[0]}}_{p,q}$ is a smooth submanifold of $\mathcal{P}^{1,2}_{p,q}$ of dimension $indp-indq$.
\end{theorem}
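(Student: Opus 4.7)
The plan is to derive this as an application of Proposition \ref{usefulprop} with $\mathcal{G} = \widetilde{\mathcal{G}}^{[0]} = \widetilde{\mathcal{G}_{g_0}}$, $\mathcal{P} = \mathcal{P}^{1,2}_{p,q}$, Banach bundle $\mathcal{E}$, and the smooth section $\widehat{F} = \widehat{F}_{[0]}$. First I would fix, using the remark after the definition of regularity, a \emph{regular} homotopy $H_{[0]}$ between $f^{\alpha}$ and $f^{\beta}$; this is the preparatory step that controls the behavior at constant trajectories, which are the obstruction points to the universal transversality argument below. Since the Fredholm property (hypothesis (ii) of Proposition \ref{usefulprop}) is exactly the $\ell = 0$ case of the Fredholm proposition already stated in the excerpt, with index $\operatorname{ind} A(-\infty) - \operatorname{ind} A(+\infty) = \operatorname{ind} p - \operatorname{ind} q$, the real content is hypothesis (i): the universal map $\widehat{F}$ has $0$ as a regular value.

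To verify (i), I would split the linearization $D\widehat{F}_{(G_{[0]}, u)}(\hat{G}, \xi)$ into the $\xi$-part (which is the Fredholm operator $F_A \xi = \dot{\xi} + A(t)\xi$ whose image has finite codimension) and the $\hat{G}$-part (the infinitesimal effect of a metric variation $\hat{G}$ on the gradient vector field $\nabla_{G_{[0]}} H_{[0]}$). The strategy is the standard cokernel-killing argument: pick any $w \in L^2(u^{\ast}TM)$ orthogonal to the image of $D\widehat{F}$; orthogonality against all $\xi$-variations shows by the adjoint computation recalled in the excerpt that $w \in W^{1,2}$ and satisfies the adjoint ODE $\partial_t w = -A^T w$, so $w$ is real-analytic in $t$ and vanishes at infinity with exponential decay. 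Orthogonality against all metric variations $\hat{G} \in T_{G_{[0]}}\widetilde{\mathcal{G}}^{[0]}$ then forces $w \equiv 0$ along the trajectory $u$.

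The hard part will be closing this last step at the two kinds of problematic loci. If $u$ is non-constant then for some $t_0$ the velocity $\dot{u}(t_0) = -\nabla_{G_{[0]}(t_0)} H_{[0]}(t_0, u(t_0))$ is nonzero, and one can cook up a smooth, compactly supported, self-adjoint perturbation $\hat{G}$ of the metric localized near $(t_0, u(t_0))$ whose effect on the gradient pairs nontrivially with any putative nonzero $w(t_0)$; this forces $w \equiv 0$ on the non-constant part. The genuinely delicate case is when $u$ is a constant $\mathbb{R}$-critical trajectory of $H_{[0]}$, since there the metric variation produces no gradient change; this is precisely why regularity of $H_{[0]}$ was imposed at the outset, as it guarantees directly that the linearization $\partial_t + \mathrm{Hess}(H_{[0]}(t))(u_0)$ is already surjective on $W^{1,2}(u_0^{\ast}TM)$, so no cokernel element can be supported on constant trajectories either.

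Once (i) and (ii) are in place, Proposition \ref{usefulprop} produces a Baire generic set $\widetilde{\Sigma} \subset \widetilde{\mathcal{G}}^{[0]}$ of metrics $G_{[0]}$ for which $\mathcal{M}^{H_{[0]}}_{p,q} = F_{G_{[0]}}^{-1}(0)$ is a smooth submanifold of $\mathcal{P}^{1,2}_{p,q}$ of the correct Fredholm dimension $\operatorname{ind} p - \operatorname{ind} q$; picking any $G_{[0]} \in \widetilde{\Sigma}$ furnishes the desired interpolation homotopy, completing the proof. The entire scheme is a direct adaptation of \cite{schwarz} and \cite{frauenfelder-nicholls}, with the only novelty being that the metric now varies in the path space $\widetilde{\mathcal{G}}^{[0]}$ rather than in $\mathcal{G}_{g_0}$; the Banach manifold structure of this path space, established in the propositions just above, is what makes the transversality argument legitimate in this setting.
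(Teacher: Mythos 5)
Your proposal is correct and follows essentially the same route as the paper: fix a regular finite homotopy $H_{[0]}$ (as guaranteed by the remark after the definition of regularity, which handles the $\mathbb{R}$-critical constant trajectories), then apply Proposition \ref{usefulprop} with $\mathcal{G}=\widetilde{\mathcal{G}}^{[0]}$, $\mathcal{P}=\mathcal{P}^{1,2}_{p,q}$, and $\Phi=\widehat{F}_{[0]}$ to extract a Baire-generic set of metrics $G_{[0]}$ for which $F_{G_{[0]}}^{-1}(0)$ is a smooth submanifold of the Fredholm index $ind\, p - ind\, q$. The paper's own proof sketch is terser, deferring the verification of hypotheses (i) and (ii) to \cite{schwarz} and \cite{frauenfelder-nicholls}; your cokernel-killing argument, with the split into non-constant trajectories (handled by localized metric variations) versus constant ones (handled by regularity of $H_{[0]}$), fills in exactly those deferred details, modulo a harmless sign in the adjoint equation, which the paper records as $\partial_t w = B^T w$ rather than $\partial_t w = -A^T w$.
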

\begin{remark}
Here "generic" means that for such pairs, $H_{[0]}$ is regular with respect to $G_{[0]}$, and at the same time $G_{[0]}$ is generic with respect to $H_{[0]}$ in the sense as above.
\end{remark}
The proof of Theorem \ref{regularthm0} combines the methods in (\cite{schwarz}, p. 47-49) and (\cite{frauenfelder-nicholls}, p. 68-73). The idea of the proof is as follows: We choose any finite homotopy $\overline{G}_{[0]}$ from $g^{\alpha}$ to $g^{\beta}$,  then there exists a finite homotopy $H_{[0]}$ which is regular under $\overline{G}_{[0]}$. Such $H_{[0]}$ satisfies the conditions of the Proposition \ref{usefulprop}. Then a generic $G_{[0]}$ with respect to $H_{[0]}$  can be found to assure that  $F_{[0]G_{[0]}}^{-1}(0)$ is a smooth submanifold with expected dimension by using the proposition. { The details are omitted.}

%\begin{definition}
%A parametrized homotopy $H_{[1]}:=H^{\alpha \beta}_{[1]}$ is called regular with respect to $G_{[1]}$ if for any fixed $s\in [0,1]$,  $H_{[1]}(s,\cdot,\cdot)$ is regular, that is, for any $\R$-critical point $u_{0}$ of $H_{[1]}(s,\cdot,\cdot)$ which we also call an $(s,t)-$critical point: $\nabla_{G_{[1]}} H_{[1]}(s,\cdot,u_{0})=0$, we have $\frac{\partial}{\partial t}+Hessian(H_{[1]}(s,\cdot))(u_{0})$ is onto.\\
%\indent Note that $Hessian(H_{[1]}(s,\cdot))(u_{0})$ is the Hessian of $H_{[1]}(s,\cdot,\cdot)$ at $u_{0}$ which is $G_{[1]}(s,\cdot)$-dependent (metric dependent).
%\end{definition}

For general case, we can define the regularity of $H_{[\ell]}$ similarly.

\begin{definition}\label{regular}
%\ell\geqslant 1不用说了，因为P9写了
A higher homotopy $H_{[\ell]}:=H^{\alpha \beta}_{[\ell]}$ is called {\it regular} with respect to the Riemannian metric $G_{[\ell]}$ if for any $\mathbf{s}=(s^{\underline{\ell}},\cdots,s^{\underline{1}})\in [0,1]^{\ell}$, $H_{[\ell]}(s^{\underline{\ell}},\cdots,s^{\underline{1}},t,u)$ is regular, i.e., for any $\R$- or $(\mathbf{s},t)-$critical point $u_{0}$ of $H_{[\ell]}(s^{\underline{\ell}},\cdots,s^{\underline{1}},t,u)$:
$\nabla_{G_{[\ell]}} H_{[\ell]}(s^{\underline{\ell}},\cdots,s^{\underline{1}},t,u_{0})=0$, for all $t\in \mathbb{R}$, we have $\frac{\partial}{\partial t}+Hessian(H_{[\ell]}(s^{\underline{\ell}},\cdots,s^{\underline{1}},t))(u_{0})$ is onto.\\
%\indent Still
\indent Note that $Hessian(H_{[\ell]}(s^{\underline{\ell}},\cdots,s^{\underline{1}},t))(u_{0})$ is the Hessian of $H_{[\ell]}$ at $u_{0}$ which is $G_{[\ell]}$-dependent.
\end{definition}

%\begin{theorem}\label{regularthm1}
%Given two Morse-Smale pairs $(f^{\alpha},g^{\alpha})$ and $(f^{\beta},g^{\beta})$, there exists a generic $(H_{[1]},G_{[1]})$ such that $\mathcal{M}_{p,q}^{H_{[1]}}:=F_{[1]G_{[1]}}^{-1}(0)$ is a ($ind p- ind q+1$)-dimensional smooth submanifold of $[0,1]\times\mathcal{P}_{p,q}^{1,2}$, where $F_{[1]G_{[1]}}(\cdot,\cdot)=\widehat{F}_{[1]}(G_{[1]},\cdot,\cdot)$.
%\end{theorem}

%The idea of the proof is that: From Theorem \ref{regularthm0}, there exist generic finite homotopies $(H_{[0]0},G_{[0]0})$ and $(H_{[0]1},G_{[0]1})$ between $(f^{\alpha},g^{\alpha})$ and $(f^{\beta},g^{\beta})$. Here "generic" means that $H_{[0]0}$ is regular under $G_{[0]0}$, $H_{[0]1}$ is regular under $G_{[0]1}$, $G_{[0]0}$ is generic w.r.t. $H_{[0]0}$ and $G_{[0]1}$ is generic w.r.t. $H_{[0]1}$.
%There exists a finite higher homotopy $H_{[1]}$ which is regular under $\overline{G}_{[1]}$, where $\overline{G}_{[1]}$ is any finite homotopy from $G_{[0]0}$ to $G_{[0]1}$. Then we can check that $H_{[1]}$ satisfies the conditions of Proposition \ref{usefulprop}. A generic $G_{[1]}$ can be found to assure that $F_{[1]G_{[1]}}^{-1}(0)$ is a submanifold with expected dimension by Proposition \ref{usefulprop}.

%In general, we have
\begin{theorem}\label{modulispaceellcase}
Let $(f^{\alpha},g^{\alpha})$ and $(f^{\beta},g^{\beta})$ be two given MS pairs, and $p\in Cr f^{\alpha}$, $q\in Cr f^{\beta}$.
Then there exists a generic $(H_{[\ell]},G_{[\ell]})$ such that $\mathcal{M}_{p,q}^{H_{[\ell]}}:=F_{[\ell]G_{[\ell]}}^{-1}(0)$ is a ($ind p- ind q+\ell$)-dimensional submanifold of $[0,1]^{\ell}\times\mathcal{P}^{1,2}_{p,q}$, where $F_{[\ell]G_{[\ell]}}(\cdots)=\widehat{F}_{[\ell]}(G_{[\ell]},\cdots)$.
\end{theorem}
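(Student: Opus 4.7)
The plan is to apply Proposition~\ref{usefulprop} to the universal operator $\widehat{F}_{[\ell]}$ defined in (\ref{Fhat}), with $\mathcal{G}=\widetilde{\mathcal{G}}^{[\ell]}$ and $\mathcal{P}=[0,1]^{\ell}\times\mathcal{P}^{1,2}_{p,q}$, upgrading the $\ell=0$ argument of Theorem~\ref{regularthm0} to the parameter-dependent setting. First I would fix an arbitrary $\overline{G}_{[\ell]}\in\widetilde{\mathcal{G}}^{[\ell]}$; then, using the perturbation trick recalled after Theorem~\ref{regularthm0} (adding bump functions supported near each offending $\R$- or $(\mathbf{s},t)$-critical point), I would produce an $H_{[\ell]}\in\mathcal{H}^{[\ell]}$ that is regular with respect to $\overline{G}_{[\ell]}$ in the sense of Definition~\ref{regular}. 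Since regularity is an open condition in both arguments, it will persist for metrics $G_{[\ell]}$ in a neighbourhood of $\overline{G}_{[\ell]}$, which is exactly what is needed later to verify hypothesis (i) of Proposition~\ref{usefulprop}.

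Condition (ii) is essentially settled by material already in the excerpt. For fixed $G_{[\ell]}$ and any $u\in F_{[\ell]G_{[\ell]}}^{-1}(0)$, the linearization reads $F_{[\ell]A}\xi=\dot{\xi}+A(\mathbf{s},t)\xi$, where $A(-\infty)$ and $A(+\infty)$ are the Hessians of $f^{\alpha}$ at $p$ and $f^{\beta}$ at $q$. By the Fredholm proposition this operator is Fredholm of index $ind\,p-ind\,q+\ell$, the extra $\ell$ reflecting precisely the free parameters $(s^{\underline{\ell}},\cdots,s^{\underline{1}})\in[0,1]^{\ell}$. This index will be the dimension produced at the end by Proposition~\ref{usefulprop}.

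The main obstacle, and the only genuinely new step beyond the $\ell=0$ case, is the universal transversality: $0$ must be a regular value of $\widehat{F}_{[\ell]}$ on $\widetilde{\mathcal{G}}^{[\ell]}\times[0,1]^{\ell}\times\mathcal{P}^{1,2}_{p,q}$. I would argue by the standard $L^{2}$-duality contradiction. Suppose $w\in L^{2}(u^{\ast}TM)$ annihilates the image of $d\widehat{F}_{[\ell]}$ at some zero $(G_{[\ell]},\mathbf{s},u)$. The $u$-directional variations alone give, exactly as in the \emph{Adjoint} paragraph of the excerpt, that $w$ is a $W^{1,2}$ weak solution of $\partial_{t}w=-A(\mathbf{s},t)^{T}w$, so unique continuation applies. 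Now split into cases. If $u$ is not a constant trajectory, there is an open time interval $I$ on which $\nabla_{G_{[\ell]}}H_{[\ell]}(\mathbf{s},t,u(t))\neq 0$; testing $w$ against infinitesimal variations $\delta G_{[\ell]}$ of the metric supported in a tube around $u(I)$ (such compactly supported bumps are available thanks to the norm $\|\cdot\|_{\sim_{b}}$) forces $\int\langle\delta\nabla H_{[\ell]},w\rangle\,dt=0$ for all such $\delta$, hence $w|_{I}\equiv 0$ and hence $w\equiv 0$ on all of $\R$. If instead $u$ is a constant trajectory at some $\R$- or $(\mathbf{s},t)$-critical point $u_{0}$, the metric-variation trick genuinely fails, and this is precisely where Definition~\ref{regular} earns its keep: regularity says that $\partial_{t}+Hessian(H_{[\ell]}(\mathbf{s},t))(u_{0})$ is onto, which again yields $w\equiv 0$.

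Having verified (i) and (ii), Proposition~\ref{usefulprop} produces a Baire set $\widetilde{\Sigma}_{[\ell]}\subset\widetilde{\mathcal{G}}^{[\ell]}$ such that for any $G_{[\ell]}\in\widetilde{\Sigma}_{[\ell]}$ the zero set $F_{[\ell]G_{[\ell]}}^{-1}(0)$ is a smooth submanifold of $[0,1]^{\ell}\times\mathcal{P}^{1,2}_{p,q}$ of the expected dimension $ind\,p-ind\,q+\ell$. Pairing such a $G_{[\ell]}$ with the $H_{[\ell]}$ chosen in the first paragraph yields the generic pair $(H_{[\ell]},G_{[\ell]})$ required by the statement. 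I expect the constant-trajectory case of the surjectivity analysis to be the most delicate part, as it is the only place where the two ingredients ``$H_{[\ell]}$ regular under $G_{[\ell]}$'' and ``$G_{[\ell]}$ generic with respect to $H_{[\ell]}$'' play strictly complementary roles.
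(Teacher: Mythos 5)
Your plan is sound and would work, but it takes a genuinely different route from the paper's proof, and there is one point you elide that the paper's structure is precisely designed to handle.

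Both arguments terminate by invoking Proposition~\ref{usefulprop}, and your direct verification of hypothesis (i) (universal transversality) via the $L^2$-duality contradiction — metric variations where the gradient is nonzero, the regularity condition from Definition~\ref{regular} at constant trajectories — is the standard Sard--Smale argument and is essentially right. You also correctly identify the constant-trajectory case as the place where ``$H_{[\ell]}$ regular under $G_{[\ell]}$'' and ``$G_{[\ell]}$ generic w.r.t.\ $H_{[\ell]}$'' play complementary roles. The paper, however, does \emph{not} directly verify transversality at level $\ell$; instead it argues \emph{inductively} on $\ell$. Starting from Theorem~\ref{regularthm0}, it produces generic pairs $(H_{[\ell]0},G_{[\ell]0})$, $(H_{[\ell]1},G_{[\ell]1})$ and then climbs one rung at a time using two lemmas: the first (unnamed) lemma says that any $H_{[\ell+1]}$ whose slice at some $r_0$ agrees with a generic $H_{[\ell]}$ inherits regularity there; Lemma~\ref{goingtohigher} shows a generic $G_{[\ell+1]}$ interpolating $G_{[\ell]0}$ and $G_{[\ell]1}$ exists, by proving the condition $(\star)$ that the fibers of the evaluation maps $ev_{r_0}$, $ev_{r_1}$ span $T_{G_{[\ell+1]}}\widetilde{\mathcal{G}}^{[\ell+1]}$. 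That spanning argument, not the duality computation, is what provides surjectivity of $d\Pi$ in the paper.

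The gap in your proposal is the construction of a \emph{regular} $H_{[\ell]}\in\mathcal{H}^{[\ell]}$ in the first step. You propose to perturb an arbitrary higher homotopy by bump functions $k_{\mathbf{s},t}$ supported near non-regular $(\mathbf{s},t)$-critical points, but the space $\mathcal{H}^{[\ell]}$ is defined with degenerating boundary constraints: the slices at $s^{\underline{i}}=0,1$ must collapse (with the atavistic identifications) to lower-level homotopies, and at $|t|>T$ the function must equal $f^{\alpha},f^{\beta}$. A bump supported near a constant trajectory living on $\partial[0,1]^\ell$ or in the region $|t|>T$ cannot be added without leaving $\mathcal{H}^{[\ell]}$. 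The paper's induction sidesteps this entirely: the boundary slices are \emph{already} generic pairs produced at the lower level, so no perturbation near the boundary of the parameter cube is needed, and the remaining perturbations can be localized to the interior of $[0,1]^\ell\times[-T,T]$. To make your direct version rigorous you would have to reintroduce exactly this inductive bootstrap, after which the two proofs essentially merge.
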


{It is a generalization of Theorem $2$ in \cite{schwarz} (p. 50).} The idea of the proof %{for the case $(H_{[\ell]},G_{[\ell]})$}
is as follows: From Theorem \ref{regularthm0}, there exist generic finite homotopies $(H_{[0]0},G_{[0]0})$ and $(H_{[0]1},G_{[0]1})$ between $(f^{\alpha},g^{\alpha})$ and $(f^{\beta},g^{\beta})$. Here "generic" means that $H_{[0]0}$ is regular under $G_{[0]0}$, $H_{[0]1}$ is regular under $G_{[0]1}$, at the same time $G_{[0]0}$ is generic w.r.t. $H_{[0]0}$ and $G_{[0]1}$ is generic w.r.t. $H_{[0]1}$.
Based on the following Lemma \ref{goingtohigher}, there exists a finite higher homotopy $H_{[1]}$ which is regular under $\overline{G}_{[1]}$, where $\overline{G}_{[1]}$ is any finite homotopy from $G_{[0]0}$ to $G_{[0]1}$. Then we can check that $H_{[1]}$ satisfies the conditions of Proposition \ref{usefulprop}. {We should mention that the regularity property for $H_{1}$ assures that the second condition in the Proposition \ref{usefulprop} holds.} A generic $G_{[1]}$ can be found to guarantee  that $F_{[1]G_{[1]}}^{-1}(0)$ is a submanifold with expected dimension by Proposition \ref{usefulprop}. {By iterating the arguments, there exist $(H_{[\ell]},G_{[\ell]})$ such that $F_{[\ell]G_{[\ell]}}^{-1}(0)$ is a ($ind p- ind q+\ell$)-dimensional submanifold of $[0,1]^{\ell}\times\mathcal{P}^{1,2}_{p,q}$.}
In the process, the following two lemmas play significant roles.
%Roughly speaking, if $G_{[\ell]0}$ and $G_{[\ell]1}$ are generic, then $G_{[\ell+1]}$ connecting the above two is generic too. At the same time, if $H_{[\ell]}$ is regular, then $H_{[\ell+1]}$ passing through $H_{[\ell]}$ is regular as well.

Recall that we have already chosen some finite homotopy $\overline{G}_{[\ell+1]}$ and homotopy $H_{[\ell+1]}$ regular under $\overline{G}_{[\ell+1]}$, the goal of the following lemmas is to find $G_{[\ell +1]}$ in the neighborhood of $\overline{G}_{[\ell+1]}$ such that the pair $(H_{[\ell+1]}, G_{[\ell +1]})$ is generic.

\begin{lemma}
Let $(G_{[\ell]}, H_{[\ell]})$ be generic, then the homotopy $H_{[\ell+1]}$ satisfying that there exists an $r_{0}\in [0,1]$ such that $H_{[\ell+1]}(r_{0},\cdots)=H_{[\ell]}$ is regular with respect to $G_{[\ell+1]}$.
\end{lemma}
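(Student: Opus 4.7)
The plan is to promote the existence--of--regular--homotopy argument from \cite{schwarz} (p.~46), already invoked for $\ell=0$ after Theorem~\ref{regularthm0}, to a parameter cube of dimension $\ell+1$ while preserving the prescribed boundary value on the slice $\{s^{\underline{\ell+1}}=r_{0}\}$ and the degeneration conditions built into the definition of $\mathcal{H}^{[\ell+1]}$.

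First I would fix any $\overline{H}_{[\ell+1]}\in\mathcal{H}^{[\ell+1]}$ whose restriction at $s^{\underline{\ell+1}}=r_{0}$ equals $H_{[\ell]}$; such an extension exists because $H_{[\ell]}$ already satisfies the compatibility conditions required on that $\ell$--dimensional face, and in the new direction one interpolates with a smooth cut--off that stays in $\mathcal{H}^{[\ell]}$ on each horizontal slice. On the slice $\{s^{\underline{\ell+1}}=r_{0}\}$ the restriction of $(\overline{H}_{[\ell+1]},G_{[\ell+1]})$ is exactly $(H_{[\ell]},G_{[\ell]})$, which is generic by hypothesis, so every $\R$--critical constant trajectory has a surjective linearization $\partial_{t}+Hessian(H_{[\ell]}(\mathbf{s},t))(u_{0})$. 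Since regularity is an open condition (remark after Definition~\ref{regular}), this surjectivity persists on an open neighborhood $V$ of the $r_{0}$--slice in $[0,1]^{\ell+1}$. On the compact complement $K=[0,1]^{\ell+1}\setminus V'$ with $V'\subset\subset V$, I would enumerate the non--regular $\R$--critical points $(\mathbf{s}'_{j},u_{j})$; after a preliminary small transversality perturbation they form a finite set. Around each one, as in \cite{schwarz} p.~46, I would build a smooth bump $k_{j}(\mathbf{s}',t,x)$ with support in a tiny neighborhood of $(\mathbf{s}'_{j},u_{j})$ chosen disjoint from the $r_{0}$--slice and from every face $\{s^{\underline{i}}=0,1\}$, and satisfying $d_{x}k_{j}(\mathbf{s}'_{j},t,u_{j})\neq 0$ for all $t$. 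Replacing $\overline{H}_{[\ell+1]}$ by $\overline{H}_{[\ell+1]}+\sum_{j}\varepsilon_{j}k_{j}$ with sufficiently small $\varepsilon_{j}>0$ kills each pathological $u_{j}$ by the local argument recalled after Theorem~\ref{regularthm0}; openness of regularity and compactness of $K$ then conclude after finitely many such modifications.

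The principal difficulty is purely bookkeeping--theoretic: the bump supports must simultaneously stay clear of $\{s^{\underline{\ell+1}}=r_{0}\}$ (so that $H_{[\ell+1]}(r_{0},\cdots)=H_{[\ell]}$ is preserved) and of all cube faces $\{s^{\underline{i}}=0,1\}$ (so that the degeneration conditions pictured in Figure~\ref{parameter space} are preserved). Because on those regions the pair has already been arranged to be regular, the non--regular set in $K$ is bounded away from them, and the required disjointness is automatic. Once this localization is in place the construction goes through and produces the desired $H_{[\ell+1]}$ regular with respect to $G_{[\ell+1]}$.
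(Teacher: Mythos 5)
Your approach is genuinely different from the paper's, and in fact it proves a different statement. You give a Schwarz-style perturbation argument to \emph{construct} some regular $H_{[\ell+1]}$ extending $H_{[\ell]}$ at the $r_0$-slice, by adding bumps supported away from that slice and from the cube faces. The paper instead gives a direct functional-analytic argument about the homotopy $H_{[\ell+1]}$ one is already handed: it builds the two commutative diagrams relating $\widehat{F}_{[\ell]}$ and $\widehat{F}_{[\ell+1]}$ at $r=r_0$, introduces the three linearizations $DF_{2\textrm{I}}$, $DF_{2\textrm{II}}$, $DF_{2\textrm{III}}$, observes that $DF_{2\textrm{I}}=DF_{2\textrm{II}}$ and that $DF_{2\textrm{I}}$ is onto by the genericity/regularity of the level-$\ell$ pair, and then concludes surjectivity of $DF_{2\textrm{III}}$ from the inclusion $T_{\mathbf{s}}[0,1]^{\ell}\hookrightarrow T_{(r_0,\mathbf{s})}[0,1]^{\ell+1}$. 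No bump functions, no perturbation of $H$. The paper's diagram-chase argument is also the one that feeds directly into the proof of the subsequent Lemma~\ref{goingtohigher}, which begins from ``the two commutative diagrams from the last lemma'', so replacing it with a construction proof would break the logical chain of the section.

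There is also a substantive issue with your perturbation strategy as written. For $\ell+1\geq 1$ the set of non-regular $\R$-critical data lives over the entire parameter cube: it sits inside $[0,1]^{\ell+1}\times(Cr\,f^{\alpha}\cap Cr\,f^{\beta})$, whose first factor is a continuum. Your claim that ``after a preliminary small transversality perturbation they form a finite set'' is the step that would make the Schwarz-type bump argument work, but it is not justified: genericity arguments produce regularity off a codimension-positive subset of parameter space, not at \emph{every} parameter, and Definition~\ref{regular} asks for surjectivity for every $(r,\mathbf{s})\in[0,1]^{\ell+1}$. So without an additional argument showing that the failure locus is already zero-dimensional (which Schwarz's $\ell=0$ case gets for free, since there is no parameter), the bump-function modification step is not complete. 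The paper sidesteps this by staying at the slice $r=r_0$ and arguing only about the parametrized linearization $DF_{2\textrm{III}}$ there; whether that suffices to establish the stated pointwise-in-$(r,\mathbf{s})$ regularity is a separate question about the paper's wording, but at minimum it is a fundamentally different and non-perturbative route from yours.
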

\begin{proof}
Let us start from the following commutative diagram at any $r=r_0\in [0,1]$ {which is the first coordinate of $[0,1]^{\ell+1}$}.
%Need to notice that $F_{[\ell]}$ (resp. $F_{[\ell+1]}$) is the map defined by $H_{[\ell]}$ (resp. $H_{[\ell+1]}$). In our case, $H_{[\ell]}=H_{[\ell]0},\ H_{[\ell]1}$ respectively and $H_{[\ell+1]}$ is the homotopy from $H_{[\ell]0}$ to  $H_{[\ell]1}$.
%Note that $f$ depend on $r=r_{0}$.

\begin{tikzcd}
%      &\widetilde{\mathcal{G}}^{[\ell+1]}\times [0,1]^{\ell+1}\times \mathcal{P}_{p,q}^{1,2}  \arrow[r, "\widehat{F}_{[\ell+1]}"] & pr^{\ast}_{[\ell+1]}\mathcal{E}\\
\widetilde{\mathcal{G}}^{[\ell+1]}\times [0,1]^{\ell}\times \mathcal{P}_{p,q}^{1,2}  \arrow[r, "id\times em_{r=r_{0}}\times id"] \arrow[dr, "f_{r=r_{0}}:=ev_{r_{0}}\times id \times id"] & \widetilde{\mathcal{G}}^{[\ell+1]}\times [0,1]^{\ell+1}\times \mathcal{P}_{p,q}^{1,2}  \arrow[r, "\widehat{F}_{[\ell+1]}"]  \arrow[d, "ev_{r_{0}}\times pr \times id"] & pr^{\ast}_{[\ell+1]}\mathcal{E}=[0,1]^{\ell+1}\times \mathcal{E} \arrow[d, "pr"]\\
        & \widetilde{\mathcal{G}}^{[\ell]}\times [0,1]^{\ell}\times \mathcal{P}_{p,q}^{1,2}   \arrow[r, "\widehat{F}_{[\ell]_{r_{0}}}"] & pr^{\ast}_{[\ell]}\mathcal{E}=[0,1]^{\ell}\times \mathcal{E}
\end{tikzcd}\\
{where $ev_{r_{0}}:\widetilde{\mathcal{G}}^{[\ell+1]}\longrightarrow \widetilde{\mathcal{G}}^{[\ell]}$, $ev_{r_{0}}G_{[\ell+1]}(r,\cdots)=G_{[\ell+1]}(r_{0},\cdots)$ is the evaluation map and $em$ is the embedding map.}

Differentiating the maps we get the following communicative diagram at $r=r_{0}$.

\begin{tikzcd} \label{tang comm diag}
%       &T_{G_{[\ell+1]}}\widetilde{\mathcal{G}}^{[\ell+1]}\times T_{(r,\mathbf{s})}[0,1]^{\ell+1}\times T_{u}\mathcal{P}_{p,q}^{1,2}  \arrow[r, "d\widehat{F}_{[\ell+1]}"] & T_{(r,\mathbf{s},u,\widetilde{v})}pr^{\ast}_{[\ell+1]}\mathcal{E}\\
T_{G_{[\ell+1]}}\widetilde{\mathcal{G}}^{[\ell+1]}\times T_{\mathbf{s}}[0,1]^{\ell}\times T_{u}\mathcal{P}_{p,q}^{1,2}  \arrow[r, "d(id\times em_{r=r_{0}}\times id)"] \arrow[dr, "d(f_{r=r_{0}}):=d(ev_{r_{0}}\times id \times id)"] & T_{G_{[\ell+1]}}\widetilde{\mathcal{G}}^{[\ell+1]}\times T_{(r=r_{0},\mathbf{s})}[0,1]^{\ell+1}\times T_{u}\mathcal{P}_{p,q}^{1,2}  \arrow[r, "d\widehat{F}_{[\ell+1]}"]  \arrow[d, "d(ev_{r_{0}}\times pr \times id)"] & T_{(r_{0},\mathbf{s},u,\widetilde{v})}pr^{\ast}_{[\ell+1]}\mathcal{E} \arrow[d, "d pr"]\\
        &T_{G_{[\ell]}}\widetilde{\mathcal{G}}^{[\ell]}\times T_{\mathbf{s}}[0,1]^{\ell}\times T_{u}\mathcal{P}_{p,q}^{1,2}   \arrow[r, "d\widehat{F}_{[\ell]_{r_{0}}}"] & T_{(\mathbf{s},u,v)}pr^{\ast}_{[\ell]}\mathcal{E}%=T_{(\mathbf{s},u,v)}([0,1]^{\ell}\times \mathcal{E})
\end{tikzcd}\\
where $\widetilde{v}=\partial_{t}u+\nabla_{G_{[\ell+1]}}H_{[\ell+1]}$ and $v=\partial_{t}u+\nabla_{G_{[\ell]}}H_{[\ell]}$,  $T_{(\mathbf{s},u,v)}pr^{\ast}_{[\ell]}\mathcal{E}=T_{(\mathbf{s},u,v)}([0,1]^{\ell}\times \mathcal{E})$ and $T_{(r,\mathbf{s},u,v)}pr^{\ast}_{[\ell+1]}\mathcal{E}=T_{(r,\mathbf{s},u,v)}([0,1]^{\ell+1}\times \mathcal{E})$.

We introduce three maps $DF_{2\textrm{I}}$, $DF_{2\textrm{II}}$ and $DF_{2\textrm{III}}$. Here $\mathbb{E}$ means the fiber of $\mathcal{E}$, $pr^{\ast}_{[\ell]}\mathcal{E}$ and $pr^{\ast}_{[\ell+1]}\mathcal{E}$.

$DF_{2\textrm{I}}:T_{\mathbf{s}}[0,1]^{\ell}\times T_{u}\mathcal{P}_{p,q}^{1,2} \longrightarrow  T_{(\mathbf{s},u,v)}pr^{\ast}_{[\ell]}\mathcal{E}\stackrel{pr}{\longrightarrow} \mathbb{E}$,

$DF_{2\textrm{II}}:T_{\mathbf{s}}[0,1]^{\ell}\times T_{u}\mathcal{P}_{p,q}^{1,2} \stackrel{incl_{r=r0}\times id}{\longrightarrow} T_{(r_{0},\mathbf{s})}[0,1]^{\ell+1}\times T_{u}\mathcal{P}_{p,q}^{1,2}  \longrightarrow T_{(r_{0},\mathbf{s},u,\widetilde{v})}pr^{\ast}_{[\ell+1]}\mathcal{E}\stackrel{pr}{\longrightarrow} \mathbb{E}$,

$DF_{2\textrm{III}}:T_{(r,\mathbf{s})}[0,1]^{\ell+1}\times T_{u}\mathcal{P}_{p,q}^{1,2}  \longrightarrow T_{(r,\mathbf{s},u,\widetilde{v})}pr^{\ast}_{[\ell+1]}\mathcal{E}\stackrel{pr}{\longrightarrow} \mathbb{E}$.

Following from the second diagram \ref{tang comm diag}, we have $DF_{2\textrm{I}}=DF_{2\textrm{II}}$. Due to the regularity of $H_{[\ell]}$, the map $DF_{2\textrm{I}}$ is onto. Consequently, $DF_{2\textrm{II}}$ is onto as well which in turn  implies $DF_{2\textrm{III}}$ is so, because $[0,1]^{\ell}\subset [0,1]^{\ell+1}$.
\end{proof}
%H_{[\ell]} regular, then H_{[\ell+1]} is regular.
\begin{lemma}\label{goingtohigher}
Let $(G_{[\ell]0}, H_{[\ell]0})$ and $(G_{[\ell]1}, H_{[\ell]1})$ ($\ell\in\mathbb{Z}_{\geqslant 0}$) be generic. Then there exists a homotopy $G_{[\ell+1]}$ from $G_{[\ell]0}$ to $G_{[\ell]1}$ which is generic with respect to $H_{[\ell+1]}$, i.e. $G_{[\ell+1]}$ is a regular value of  $\Pi: \widehat{F}_{[\ell+1]}^{-1}(0)\longrightarrow \widetilde{\mathcal{G}}^{[\ell+1]}$ with $\widehat{F}_{[\ell+1]}$ defined in terms of $H_{[\ell+1]}$: $\widehat{F}_{[\ell+1]}(G_{[\ell+1]},r,\mathbf{s},u)=(r,\mathbf{s},\partial_{t}u+\nabla_{G_{[\ell+1]}}H_{[\ell+1]}(r,\mathbf{s},u))$ where $(r,\mathbf{s})\in [0,1]^{\ell+1}$.
\end{lemma}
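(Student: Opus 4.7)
The plan is to apply Proposition \ref{usefulprop} with $\mathcal{G}=\widetilde{\mathcal{G}}^{[\ell+1]}$, $\mathcal{P}=[0,1]^{\ell+1}\times\mathcal{P}_{p,q}^{1,2}$, and $\Phi=\widehat{F}_{[\ell+1]}$ built from the given $H_{[\ell+1]}$. Once both hypotheses of that proposition are verified, the projection $\Pi:\widehat{F}_{[\ell+1]}^{-1}(0)\to\widetilde{\mathcal{G}}^{[\ell+1]}$ admits a Baire set of regular values, and I would take $G_{[\ell+1]}$ to be any element of this set that still satisfies the endpoint constraints $G_{[\ell+1]}(0,\cdot,\ldots,\cdot,\cdot)=G_{[\ell]0}$ and $G_{[\ell+1]}(1,\cdot,\ldots,\cdot,\cdot)=G_{[\ell]1}$.

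Hypothesis (ii) of Proposition \ref{usefulprop}, that for fixed $G_{[\ell+1]}$ the map $(r,\mathbf{s},u)\mapsto\partial_{t}u+\nabla_{G_{[\ell+1]}}H_{[\ell+1]}(r,\mathbf{s},t,u)$ is Fredholm, is immediate from the Fredholm statement for $F_{[\ell+1]A}$ established earlier in this section; its index is $\mathrm{ind}(p)-\mathrm{ind}(q)+\ell+1$. The real work is in hypothesis (i): at any zero $(G_{[\ell+1]},r,\mathbf{s},u)$, the total linearization $d\widehat{F}_{[\ell+1]}$ should be surjective onto the $L^{2}$-fiber $\mathcal{E}_{u}$. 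I would prove this by the standard duality argument: given $w\in(\mathrm{Im}\,d\widehat{F}_{[\ell+1]})^{\perp}$, the Adjoint computation already recorded in the excerpt forces $w\in W^{1,2}$ with $\partial_{t}w=A^{T}w$, which enjoys unique continuation, so it suffices to show $w$ vanishes on a nonempty open set.

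I would split this into two cases. If $u$ is not identically a critical point of $H_{[\ell+1]}(r,\mathbf{s},\cdot,\cdot)$, then variations $\delta G\in T_{G_{[\ell+1]}}\widetilde{\mathcal{G}}^{[\ell+1]}$ supported in a small neighborhood of a regular instant of $u$ (and compatible with the collaring conditions built into $\widetilde{\mathcal{G}}^{[\ell+1]}$) span every direction of $\mathcal{E}_{u}$ locally, and Schwarz's construction (\cite{schwarz}, p.~47--49) carries over verbatim. If instead $u\equiv u_{0}$ is a constant critical path, then $G$-variations drop out because $\nabla_{G}H$ vanishes wherever $dH$ does, and surjectivity must come from the $(\mathbf{s},r,t)$-directions via $\partial_{t}+\mathrm{Hessian}(H_{[\ell+1]})(u_{0})$. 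This is precisely the regularity of $H_{[\ell+1]}$ in the sense of Definition \ref{regular}, and it is supplied by the previous lemma applied slicewise, using the genericity of $(G_{[\ell]0},H_{[\ell]0})$ and $(G_{[\ell]1},H_{[\ell]1})$.

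The main obstacle is propagating this regularity uniformly in the new coordinate $r\in[0,1]$, not merely at $r=0,1$. I would handle this by first picking a base interpolation $\overline{G}_{[\ell+1]}$ from $G_{[\ell]0}$ to $G_{[\ell]1}$ with the required collaring, invoking the previous lemma to obtain regularity of $H_{[\ell+1]}$ relative to $\overline{G}_{[\ell+1]}$, and then exploiting the openness of the regularity condition to transfer it to a $\parallel\cdot\parallel_{\sim_{b}}$-neighborhood $\mathcal{U}\subset\widetilde{\mathcal{G}}^{[\ell+1]}$ of $\overline{G}_{[\ell+1]}$. Applying Proposition \ref{usefulprop} to $\widehat{F}_{[\ell+1]}$ restricted to $\mathcal{U}\times\mathcal{P}$ then yields a dense Baire set of generic metrics inside $\mathcal{U}$; since the subset of path interpolations with prescribed endpoints is open in $\widetilde{\mathcal{G}}^{[\ell+1]}$ and $\overline{G}_{[\ell+1]}$ lies in both, density guarantees that the intersection is nonempty, and any element of it provides the desired $G_{[\ell+1]}$.
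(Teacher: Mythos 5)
Your final step contains a genuine gap. The subset of $\widetilde{\mathcal{G}}^{[\ell+1]}$ with prescribed endpoints $G_{[\ell+1]}(0,\cdot,\ldots)=G_{[\ell]0}$ and $G_{[\ell+1]}(1,\cdot,\ldots)=G_{[\ell]1}$ is a closed affine subspace of infinite codimension, not an open set, so density of the Baire set inside $\mathcal{U}$ does \emph{not} force a nonempty intersection with the fixed-endpoint locus. A comeager set can be entirely disjoint from a nowhere-dense closed set (the complement of the locus is itself comeager), which is precisely why the endpoint constraint cannot be imposed after genericity has been established.

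The hypothesis that $(G_{[\ell]0},H_{[\ell]0})$ and $(G_{[\ell]1},H_{[\ell]1})$ are generic is essential at exactly the point your argument elides, and in your write-up it is consumed only to get regularity of $H_{[\ell+1]}$. The correct fix is to build the fixed-endpoint constraint into the perturbation space from the start, so that the admissible $G_{[\ell+1]}$-variations vanish at $r=0$ and $r=1$; then one must explain how hypothesis (i) of Proposition \ref{usefulprop} is verified at those boundary slices, where the $G$-direction contributes nothing. This is exactly what the paper's fiber decomposition $T_{G_{[\ell+1]}}\widetilde{\mathcal{G}}^{[\ell+1]}=\operatorname{span}\{fiber_{r=0},\ fiber_{r=1}\}$ (condition $(\star)$) accomplishes: $fiber_{r=r_0}$ consists of variations vanishing at $r_0$, and the chain $fiber_{r=r_0}\subset Im_1\subset Im f_4$ uses that $G_{[\ell]0}$ (resp.\ $G_{[\ell]1}$) is already a regular value of the $\ell$-parameter projection, so the $(\mathbf{s},u)$-derivatives surject there without any $G_{[\ell+1]}$-input. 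Your duality plus case analysis is a reasonable way to verify hypothesis (i) in the interior, but until the boundary slices are handled via genericity of the endpoints rather than by an appeal to density, the argument does not close.
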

\begin{proof}
%Let us start from the following communicative diagram at any $r=r_0\in [0,1]$ {which is the first coordinate of $[0,1]^{\ell+1}$}.
%%Need to notice that $F_{[\ell]}$ (resp. $F_{[\ell+1]}$) is the map defined by $H_{[\ell]}$ (resp. $H_{[\ell+1]}$). In our case, $H_{[\ell]}=H_{[\ell]0},\ H_{[\ell]1}$ respectively and $H_{[\ell+1]}$ is the homotopy from $H_{[\ell]0}$ to  $H_{[\ell]1}$.
%%Note that $f$ depend on $r=r_{0}$.
Based on the two commutative diagrams from the last lemma,
we introduce %different types of fiber as follows:\\

\begin{tikzcd}
\widehat{fiber}_{r=r_{0}} \arrow[r, " "] & \widetilde{\mathcal{G}}^{[\ell+1]}\arrow[d, "ev_{r_{0}}"]\\
 & \widetilde{\mathcal{G}}^{[\ell]}
\end{tikzcd}      \quad \quad
\begin{tikzcd}
fiber_{r=r_{0}} \arrow[r, " "] & T_{G_{[\ell+1]}}\widetilde{\mathcal{G}}^{[\ell+1]}\arrow[d, "d ev_{r_{0}}"]\\
 & T_{G_{[\ell]}}\widetilde{\mathcal{G}}^{[\ell]}
\end{tikzcd}\\
where $\widehat{fiber}_{r=r_{0}}$ means the inverse image of $ev_{r_0}$ at $G_{[\ell]}$ which equals to $\big\{ G_{[\ell+1]}\big| G_{[\ell+1]}(r_{0},\cdots)= G_{[\ell]}\big\}$ with corresponding $fiber_{r=r_{0}}$ for tangent spaces. They induce the following

\begin{tikzcd}
\widehat{fiber'}_{r=r_{0}} \arrow[r, " "] & f_{r=r_{0}}^{-1}\widehat{F}_{[\ell]}^{-1}(0)\arrow[d, "ev_{r_{0}}"]\\
 & \widehat{F}_{[\ell]}^{-1}(0)
\end{tikzcd}\\
\begin{tikzcd}
fiber'_{r=r_{0}} \arrow[r, " "] & T_{(G_{[\ell+1]},\mathbf{s},u)}(f_{r=r_{0}}^{-1}\widehat{F}_{[\ell]}^{-1}(0))\arrow[d, "d (ev_{r_{0}}\times id\times id)"] &\subset T_{(G_{[\ell+1]},\mathbf{s},u)}(\widetilde{\mathcal{G}}^{[\ell+1]}\times [0,1]^{\ell} \times \mathcal{P}_{p,q}^{1,2})\\
 & T_{(G_{[\ell]},\mathbf{s},u)}(\widehat{F}_{[\ell]}^{-1}(0)) &\subset T_{(G_{[\ell]},\mathbf{s},u)}(\widetilde{\mathcal{G}}^{[\ell]}\times [0,1]^{\ell} \times \mathcal{P}_{p,q}^{1,2})
\end{tikzcd}\\
The equality $\widehat{fiber}_{r=r_{0}}= \widehat{fiber'}_{r=r_{0}}$ implies that $fiber_{r=r_{0}}\cong fiber'_{r=r_{0}}$.
%Similarly, for $r=r_{1}$, we have the corresponding results.
Let
\begin{equation}
\begin{split}
f_{1} &:= d\Pi_{1}: T_{(G_{[\ell]},\mathbf{s},u)}(\widehat{F}_{[\ell]}^{-1}(0))\subset T_{G_{[\ell]}}\widetilde{\mathcal{G}}^{[\ell]}\times T_{\mathbf{s}}[0,1]^{\ell}\times T_{u}\mathcal{P}_{p,q}^{1,2} \longrightarrow T_{G_{[\ell]}}\widetilde{\mathcal{G}}^{[\ell]}; \nonumber\\
f_{2} &:= d\Pi_{2}:T_{(G_{[\ell+1]},\mathbf{s},u)}(f_{r=r_{0}}^{-1}\widehat{F}_{[\ell]}^{-1}(0))\subset T_{G_{[\ell+1]}}\widetilde{\mathcal{G}}^{[\ell+1]}\times T_{\mathbf{s}}[0,1]^{\ell}\times T_{u}\mathcal{P}_{p,q}^{1,2} \longrightarrow T_{G_{[\ell+1]}}\widetilde{\mathcal{G}}^{[\ell+1]};\nonumber\\
f_{3} &:= d\Pi_{3}:T_{(G_{[\ell+1]},r_{0},\mathbf{s},u)}(\widehat{F}_{[\ell+1]}^{-1}(0))=T_{(G_{[\ell+1]},r,\mathbf{s},u)}(\widehat{F}_{[\ell+1]}^{-1}(0))\big|_{r_{0}}\subset T_{G_{[\ell+1]}}\widetilde{\mathcal{G}}^{[\ell+1]}\times T_{(r_{0},\mathbf{s})}[0,1]^{\ell+1}\times T_{u}\mathcal{P}_{p,q}^{1,2} \longrightarrow T_{G_{[\ell+1]}}\widetilde{\mathcal{G}}^{[\ell+1]};\nonumber\\
f_{4} &:= d\Pi_{4}:T_{(G_{[\ell+1]},r,\mathbf{s},u)}(\widehat{F}_{[\ell+1]}^{-1}(0))\subset T_{G_{[\ell+1]}}\widetilde{\mathcal{G}}^{[\ell+1]}\times T_{(r,\mathbf{s})}[0,1]^{\ell+1}\times T_{u}\mathcal{P}_{p,q}^{1,2} \longrightarrow T_{G_{[\ell+1]}}\widetilde{\mathcal{G}}^{[\ell+1]}.\nonumber
\end{split}
\end{equation}
If the following condition
 $$(\star)\ \  T_{G_{[\ell+1]}}\widetilde{\mathcal{G}}^{[\ell+1]}=\ span\big\{fiber_{r=r_{0}},\ fiber_{r=r_{1}},\, r_0\neq r_1\in [0,1]\big\}$$
holds, then the map $f_{4}$ is onto which implies that $G_{[\ell+1]}$ is generic.
%We will prove $(\star)$ later. Now we prove the statement.
In fact, we have $$fiber_{r=r_{0}}\subset d\Pi_{2}(T_{(G_{[\ell+1]},\mathbf{s},u)}(f_{r=r_{0}}^{-1}\widehat{F}_{[\ell]}^{-1}(0)))=: Im_{1} \subset Imf_{4} \subset T_{G_{[\ell+1]}}\widetilde{\mathcal{G}}^{[\ell+1]},$$ and $$fiber_{r=r_{1}}\subset d\Pi_{2}(T_{(G_{[\ell+1]},\mathbf{s},u)}(f_{r=r_{1}}^{-1}\widehat{F}_{[\ell]}^{-1}(0)))=: Im_{2} \subset Imf_{4} \subset T_{G_{[\ell+1]}}\widetilde{\mathcal{G}}^{[\ell+1]}.$$ The condition $(\star)$ implies that $Span\big\{Im_{1},\ Im_{2}\big\}=T_{G_{[\ell+1]}}\widetilde{\mathcal{G}}^{[\ell+1]}.$ Hence, $Imf_{4}=T_{G_{[\ell+1]}}\widetilde{\mathcal{G}}^{[\ell+1]}$.\\
\indent Now we prove $(\star)$. Notice that any tangent vector $X\in T_{G_{[\ell+1]}}\widetilde{\mathcal{G}}^{[\ell+1]}$ is a variational vector field along $G_{[\ell+1]}$ as a path in $\widetilde{\mathcal{G}}^{[\ell]}$.  Without loss of generality, we may assume $r_{0}=0$ and $r_{1}=1$ which is our case. Let $G_{[\ell+1]}(0,\cdots)=G_{[\ell]0}$ and $G_{[\ell+1]}(1,\cdots)=G_{[\ell]1}$.
We have $fiber_{r=0}=\big\{X:[0,1]\longrightarrow G_{[\ell+1]}^{\ast}(T\widetilde{\mathcal{G}}^{[\ell]}) \big|X(0)=0 \big\}$ and $fiber_{r=1}=\big\{X:[0,1]\longrightarrow G_{[\ell+1]}^{\ast}(T\widetilde{\mathcal{G}}^{[\ell]}) \big|X(1)=0 \big\}$. {Due to compactness of $[0,1]$, one can choose a homotopy  $G_{[\ell+1]}$ connecting $G_{[\ell]0}$ and $G_{[\ell]1}$} such that for any $X\in T_{G_{[\ell+1]}}\widetilde{\mathcal{G}}^{[\ell+1]}$, there exists $X_{1}\in fiber_{r=0}$ such that $X_{1}(1)=X(1)$. Let $X_{2}=X-X_{1}$, then $X_{2}\in fiber_{r=1}$, i.e., $X\in span\big\{fiber_{r=r_{0}},\ fiber_{r=r_{1}} \big\}$. The other direction is obvious.
\end{proof}

Let $H_{[\ell]}\in \mathcal{H}^{[\ell]}$ and $G_{[\ell]}\in \mathcal{G}^{[\ell]}$ such that $H_{[\ell]}$ is regular under $G_{[\ell]}$ and at the same time $G_{[\ell]}$ is generic w.r.t $H_{[\ell]}$ so that $F_{[\ell]G_{[\ell]}}^{-1}(0)$ is a submanifold of $\mathcal{P}^{1,2}_{p,q}$. For readers' convenience we collect here the various moduli spaces that we will use later.
$$\mathcal{M}_{p,q}^{f}:=\Big\{u\in \mathcal{P}^{1,2}_{p,q} \Big|\
     \dot{u}(t)=-\nabla_{g}f(u),\ \
    \lim\limits_{t\rightarrow -\infty}u(t)=p,\ \ \lim\limits_{t\rightarrow \infty}u(t)=q\Big\}/\R ,$$
where ``$/\R$'' means that modulo the time translation.
\begin{align}
  \mathcal{M}_{p,q}^{H_{[\ell]}^{\alpha,\beta}}:=\Big\{(s^{\underline{\ell}},\cdots,s^{\underline{1}},u)\Big|  & s^{\underline{\ell}},\cdots,s^{\underline{1}}\in[0,1],\ u(t)\in \mathcal{P}^{1,2}_{p,q}, \nonumber \\
    & \dot{u}(t)=-\nabla_{G_{[\ell]}}H_{[\ell]}(s^{\underline{\ell}},\cdots,s^{\underline{1}},t,u):= -\nabla_{G_{[\ell]}(s^{\underline{\ell}},\cdots,s^{\underline{1}},t)}H_{[\ell]}(s^{\underline{\ell}},\cdots,s^{\underline{1}},t,u),\nonumber \\
    &\lim\limits_{t\rightarrow -\infty}u(t)=p,\ \lim\limits_{t\rightarrow \infty}u(t)=q\Big\}=F_{[\ell]G_{[\ell]}}^{-1}(0), \nonumber
\end{align}
where $\ell\in\mathbb{Z}_{\geqslant 0}$ and ``$\ \ \dot\ \ $'' is the derivative with respect to time variable $t$.

Especially, when $\ell=0$,
\begin{align}
  \mathcal{M}_{p,q}^{H_{[0]}^{\alpha,\beta}}=\mathcal{M}_{p,q}^{H^{\alpha,\beta}}:=\Big\{u\in \mathcal{P}^{1,2}_{p,q}\Big|
    & \dot{u}(t)=-\nabla_{G}H(t,u):= -\nabla_{G(t)}H(t,u),\
    \lim\limits_{t\rightarrow -\infty}u(t)=p,\ \lim\limits_{t\rightarrow \infty}u(t)=q\Big\}=F_{G_{[0]}}^{-1}(0). \nonumber
\end{align}

\textbf{Compactness}%这里的紧指的是泛函中的：就是列紧：任意序列存在收敛子列。  Rn 中就是有界闭。几何上仁义开覆盖存在有限子覆盖。 拓扑空间可能不太一样，度量空间是一样的。

Having already established the smooth structure on the moduli space $\mathcal{M}_{p,q}^{H_{[\ell]}^{\alpha,\beta}}$ ($\ell\geq 0$) with expected dimension, in this section we will show that it can be compactified to a smooth manifold with corners. We give a detailed analysis about its boundaries and corners. The compactified moduli space of dimension one plays a key role and is responsible for the higher algebraic structures to be discussed in the next part.

We start with the case that $H:=H^{\alpha,\beta}$ is a smooth finite map connecting two Morse functions $f^{\alpha}$ and $f^{\beta}$. The main difference between $H^{\alpha,\beta}$ and $f$ is that $H^{\alpha,\beta}$ depends on $t$ whereas $f$ is not which results in the nonautonomous ODE.

%Based on the energy estimates, one
The following convergence property can be established by standard techniques from the theory of elliptic differential operators.

%\begin{proposition}\label{H energy unif estim}(\cite{frauenfelder-nicholls}, p. 95)
%{For any $u\in \mathcal{M}_{p,q}^{H^{\alpha,\beta}}$, the energy $E(u)$ of $u$
%$$E(u):=E_{G}(u)=\int_{-\infty}^{+\infty}G(\partial_{t}u,\partial_{t}u)dt$$
%can be uniformly estimated which means that there exist a constant $c=c(H^{\alpha,\beta})>0$ such that $$E(u)<c,\ \forall u\in \mathcal{M}_{p,q}^{H^{\alpha,\beta}}.$$}%Urs p.20,95 下面prop 的第一部分证明就是能量一致估计。
%\end{proposition}
%\begin{proof}
%\cite{frauenfelder-nicholls}, p. 95.
%\end{proof}

\begin{lemma}\label{H C loc inf conv}(\cite{frauenfelder-nicholls}, p. 21-22)\label{partiallimit}
For any sequence $(u_{\nu})_{\nu}$ in $\mathcal{M}_{p,q}^{H^{\alpha,\beta}}$, there exists a convergent subsequence $u_{\nu_{j}}$ under $C_{loc}^{\infty}$-topology:
$$u_{\nu_{j}}\stackrel{C_{loc}^{\infty}}{\longrightarrow}w\in C^{\infty}(\mathbb{R},M),$$
{where $C_{loc}^{\infty}$ means that $\forall R\in \R_{>0}$,  $u_{\nu_{j}}|_{[-R,R]}\stackrel{C^{\infty}}{\longrightarrow}w|_{[-R,R]}\in C^{\infty}(\mathbb{R},M)$ and according to \cite{audin-damian} (p. 544), the $C^{\infty}$(resp. $C ^{k}$) topology is that of the uniform convergence of the function and all of its derivatives (resp. those of order at most k) on compact subsets.}%Audin p.544
\end{lemma}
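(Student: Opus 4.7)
The plan is to combine a uniform derivative bound (coming from the compactness of $M$ and the finiteness of $H^{\alpha,\beta}$) with an Arzelà--Ascoli diagonal extraction, and then bootstrap via the gradient flow equation to upgrade $C^{0}_{loc}$-convergence to $C^{\infty}_{loc}$-convergence.

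First I would establish a uniform $C^{1}$-bound on the sequence $(u_{\nu})_{\nu}$. Since $H^{\alpha,\beta}(t,\cdot)=f^{\alpha}$ for $t<-T$, $H^{\alpha,\beta}(t,\cdot)=f^{\beta}$ for $t>T$, and $H^{\alpha,\beta}$ is smooth on the compact set $[-T,T]\times M$, the quantity $|\nabla_{G} H^{\alpha,\beta}(t,x)|$ is uniformly bounded in $(t,x)\in\mathbb{R}\times M$. Combined with $\dot{u}_{\nu}(t)=-\nabla_{G}H^{\alpha,\beta}(t,u_{\nu}(t))$, this gives a uniform bound on $\dot{u}_{\nu}$ in the $g_{0}$-norm. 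Since $M$ is compact, $\{u_{\nu}(t)\}_{\nu}\subset M$ is contained in a compact set for every $t$, and the uniform Lipschitz bound yields equicontinuity on any interval $[-R,R]$.

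Next I would apply the Arzelà--Ascoli theorem on a fixed compact interval $[-R,R]$, extracting a subsequence converging in $C^{0}([-R,R],M)$. Then I would run the standard diagonal argument over an exhausting sequence of intervals $[-R_{k},R_{k}]$ with $R_{k}\to\infty$, producing a further subsequence, still denoted $u_{\nu_{j}}$, that converges to some continuous $w:\mathbb{R}\to M$ in $C^{0}_{loc}$.

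For the bootstrap, I would use the equation itself: $\dot{u}_{\nu_{j}}(t)=-\nabla_{G(t)}H^{\alpha,\beta}(t,u_{\nu_{j}}(t))$. The right-hand side is smooth in its arguments, so $C^{0}_{loc}$-convergence of $u_{\nu_{j}}$ forces $C^{0}_{loc}$-convergence of $\dot{u}_{\nu_{j}}$, hence $u_{\nu_{j}}\to w$ in $C^{1}_{loc}$. Differentiating the ODE with respect to $t$ and using the chain rule, the same logic gives convergence of $\ddot{u}_{\nu_{j}}$ in $C^{0}_{loc}$, so $u_{\nu_{j}}\to w$ in $C^{2}_{loc}$. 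Iterating this for every order $k$ yields $u_{\nu_{j}}\stackrel{C^{\infty}_{loc}}{\longrightarrow}w$, and $w$ satisfies the same ODE and is therefore automatically smooth.

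The only mildly subtle point, which is not really an obstacle but worth writing out, is the first bootstrap step: one must be careful that the Riemannian metric $G(t,\cdot)$ and the derivatives of $H^{\alpha,\beta}$ depend continuously on $t$ and smoothly on $x$, so that substitution of the $C^{0}_{loc}$-convergent $u_{\nu_{j}}$ into $\nabla_{G(t)}H^{\alpha,\beta}(t,u)$ produces a $C^{0}_{loc}$-convergent sequence; this follows from the uniform continuity of these data on each $[-R,R]\times M$. Everything else is standard elliptic bootstrapping adapted to a (nonautonomous) first-order ODE on a compact manifold, and matches the exposition in \cite{frauenfelder-nicholls} (p. 21--22) that the statement attributes to.
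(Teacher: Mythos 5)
Your proposal is correct and follows precisely the same route the paper sketches in one line: Arzel\`a--Ascoli with a diagonal argument to get $C^{0}_{loc}$-convergence, then bootstrapping the gradient flow equation to upgrade to $C^{\infty}_{loc}$. You simply supply the details (uniform $C^{1}$-bound from compactness of $M$ and finiteness of $H^{\alpha,\beta}$, iteration of the ODE to bound higher derivatives) that the paper delegates to \cite{frauenfelder-nicholls}.
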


For the proof, one can use the Arzela-Ascoli theorem to get the $C_{loc}^{0}$ convergence, then improve the smoothness via bootstrap for the elliptic equations.
See (\cite{frauenfelder-nicholls}, p. 21-22) for details.%P21

Now we are ready to describe the boundaries to be added to the compactified moduli space. We first recall the notion of the broken gradient flow line of $n$-fold for fixed $f$ (autonomous case) which we met implicitly in \S $2$.

\begin{definition}
Given two critical points $p,q\in Crf$, an $n (\geqslant 2)$-tuple  $y=\{u^{k}\}_{1\leqslant k\leqslant n}$ is called a broken gradient flow line (broken line) of $n$-fold % 这个broken line 有n 段
from $p$ to $q$ for MS pair $(f,g)$ if there exist $c_{0}=p, c_{1}, \cdots, c_{n}=q\in Cr f$ and $\forall 1\leqslant k\leqslant n$, $u^{k}\in \mathcal{M}^{f}_{c_{k-1},c_{k}}$ and $u^{k}$ is not a constant gradient flow line.
Moreover, we say $(u_{\nu})_{\nu}$ Floer-Gromov (F-G) converges to the broken line $y=\{u^{k}\}_{1\leqslant k\leqslant n}$ if for any $1\leqslant k\leqslant n$, there exits a sequence of real numbers $(r_{\nu}^{k})_{\nu}$ such that $(r_{\nu}^{k})_{\star}u_{\nu} \stackrel{C_{loc}^{\infty}}{\longrightarrow}u^{k}$, where $(r_{\nu}^{k})_{\star}u_{\nu}(t):=u_{\nu}(t+r_{\nu}^{k})$.
\end{definition}
%n (\geqslant 2) 这里对，因为无s，没有自然边界的事

It can be generalized from the autonomous gradient flow equations to nonautonomous ones.
\begin{definition}
\indent An $n(\geqslant 2)$-tuple $y=\{u^{k}\}_{1\leqslant k\leqslant n}$ is called a broken gradient flow line (broken line) of $n$-fold % 这个broken line 有n 段
from critical point $p\in Cr f^{\alpha}$ to critical point $q\in Cr f^{\beta}$ for pair $(H,G)$ if the $n$-tuple satisfies:
(1) $\forall k, u^{k}$ is not constant;
(2) There exist $k_{0}$ and $c_{0}=p, c_{1}, \cdots, c_{n}=q$ such that $k<k_{0}$, $c_{k}\in Cr f^{\alpha}$; $k\geqslant k_{0}$, $c_{k}\in Cr f^{\beta}$; And $u^{k}\in \mathcal{M}^{f^{\alpha}}_{c_{k-1},c_{k}}$ ($k<k_{0}$); $u^{k_0}\in \mathcal{M}^{H^{\alpha,\beta}}_{c_{k_{0}-1},c_{k_{0}}}$;  $u^{k}\in \mathcal{M}^{f^{\beta}}_{c_{k-1},c_{k}}$ ($k>k_{0}$).
% and $c_{0}=p,c_{1},\cdots,c_{n}=q\in Crf$ such that \\
%$\lim\limits_{t\rightarrow -\infty}u^{1}=c_{0}$; $\lim\limits_{t\rightarrow -\infty}u^{i+1}=\lim\limits_{t\rightarrow \infty}u^{i}=c_{i}$; $\lim\limits_{t\rightarrow \infty}u^{n}=c_{n}$.\\
%\cite{frauenfelder}

We say $(u_{\nu})_{\nu}$ Floer-Gromov (F-G) converge to the broken line $y=\{u^{k}\}_{1\leqslant k\leqslant n}$ if for any $1\leqslant k\leqslant n$, there exits a $k_{0}\in \{k\in\mathbb{N}\big| 1\leqslant k\leqslant n\}$ and a sequence of $(r_{\nu}^{k})_{\nu}$ with $r_{\nu}^{k_{0}}=0$ such that $(r_{\nu}^{k})_{\star}u_{\nu}\stackrel{C_{loc}^{\infty}}{\longrightarrow}u^{k}$, where $(r_{\nu}^{k})_{\star}u_{\nu}(t):=u_{\nu}(t+r_{\nu}^{k})$.
%p94Urs
\end{definition}
%n (\geqslant 2) 这里对，因为无s，没有自然边界的事

Based on these notions and "partial'' limit behavior of Lemma \ref{partiallimit}, there is the following complete picture about the limits.

{\begin{theorem}(\cite{frauenfelder-nicholls}, p. 21-22, 25-34)
Let $f$ be a Morse function on $M$, and two critical points $p, q\in Cr f$. For a sequence of negative gradient flow lines $(u_{\nu})_{\nu}$ in $\mathcal{M}^{f}_{p,q}$, there is a subsequence $(u_{\nu_{j}})_{\nu_{j}}$ and a broken gradient flow line $y=\{u^{k}\}_{1\leqslant k\leqslant n}$ from $p$ to $q$ such that $u_{\nu_{j}}\stackrel{F-G}{\longrightarrow}y$ {if $u_{\nu_{j}}$ does not converge in $\mathcal{M}^{f}_{p,q}$}.
\end{theorem}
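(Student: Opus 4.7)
The strategy is the standard Floer--Gromov compactness argument adapted to the Morse setting, where no bubbling can occur and all non-compactness comes from breaking of trajectories. The plan is to use the conserved total energy $E = f(p)-f(q)$ as a finite budget, iteratively extract $C^{\infty}_{loc}$-convergent subsequences of suitably time-shifted trajectories via Lemma \ref{partiallimit}, and terminate by discreteness of the finite critical set $Crf$.

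First, I fix a normalization of representatives in the quotient $\mathcal{M}^f_{p,q} = \widetilde{\mathcal{M}}^f_{p,q}/\R$: for each $u_{\nu}$ pick the unique representative (still denoted $u_{\nu}$) with $f(u_{\nu}(0)) = (f(p)+f(q))/2$. Applying Lemma \ref{partiallimit} extracts a subsequence $u_{\nu_{j}}\to w$ in $C^{\infty}_{loc}$, and $w$ again solves $\dot w + \nabla_{g} f(w)=0$ since the equation passes to the $C^{\infty}_{loc}$ limit. The uniform energy bound $\int_{\R}|\dot u_{\nu}|^{2}\,dt = E$ together with Fatou's lemma gives $w$ finite energy, and since $M$ is compact with $Crf$ finite, a Palais--Smale-type argument on the non-degenerate flow produces well-defined limits $c^{-},c^{+}\in Crf$ with $f(c^{-})-f(c^{+})\leq E$. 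If $c^{-}=p$ and $c^{+}=q$, then $w\in\widetilde{\mathcal{M}}^{f}_{p,q}$ and the normalized sequence already converges in $\mathcal{M}^{f}_{p,q}$ to $[w]$, which is the excluded case; otherwise some positive energy has escaped.

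To recover the piece lost on the left, I choose divergent shifts $r_{\nu}^{-}\to-\infty$ so that $u_{\nu_{j}}(\cdot+r_{\nu_{j}}^{-})$ approaches a distinct critical point at $t=+\infty$; an explicit choice is $r_{\nu}^{-} := \inf\{t\,:\,\mathrm{dist}(u_{\nu}(t),c^{-}) \geq \varepsilon\}$ for small $\varepsilon>0$. Applying Lemma \ref{partiallimit} to the shifted sequence yields a new gradient line with right asymptote $c^{-}$ by construction and some new left asymptote in $Crf\setminus\{c^{-}\}$; iterate symmetrically on the right. Termination is then immediate: each non-constant piece $w^{k}$ contributes strictly positive energy $f(c^{-}_{k})-f(c^{+}_{k})>0$, and since $Crf$ is finite the positive energy gap has a positive minimum, so the iteration stops after boundedly many steps with leftmost asymptote $p$ and rightmost asymptote $q$. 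A diagonal extraction finally produces a single subsequence along which all shifted trajectories $u_{\nu_{j}}(\cdot+r^{k}_{\nu_{j}})$ converge in $C^{\infty}_{loc}$ to $u^{k}$, which is exactly F--G convergence to the broken line $y=\{u^{k}\}$.

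The main obstacle is the no-energy-loss identity
\begin{equation*}
\sum_{k=1}^{n}\bigl(f(c^{-}_{k})-f(c^{+}_{k})\bigr)=f(p)-f(q),
\end{equation*}
which rules out any residual "phantom" piece in the limit. In the Morse setting this reduces to exponential decay of gradient trajectories near non-degenerate critical points (the same estimate underlying the Fredholm discussion earlier in this section) together with a standard $\varepsilon$-regularity near rest points ensuring that a short-time passage near a critical point is genuinely accounted for by one of the pieces $w^{k}$. Once this is in hand, matching the asymptotes of successive pieces and verifying that $r^{k}_{\nu}-r^{k-1}_{\nu}\to\infty$ (so distinct pieces live on genuinely distinct time scales and concatenate correctly) are routine.
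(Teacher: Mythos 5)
Your proof follows the same Floer--Gromov recursion used in the cited reference and in the paper's own sketch for the $H_{[\ell]}$ analogue (Theorem \ref{FGellcase}): extract a $C^{\infty}_{loc}$ limit via Lemma \ref{partiallimit}, time-shift to capture energy escaping at $\pm\infty$, iterate, and terminate by finiteness of $Crf$. You terminate via the conserved energy budget where the reference uses index bounds; both work in the autonomous case, so this is a cosmetic difference.

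The one concrete error is your explicit shift. Since $u_\nu(t)\to p$ as $t\to-\infty$ and $\mathrm{dist}(p,c^-)>\varepsilon$, the set $\{t:\mathrm{dist}(u_\nu(t),c^-)\geq\varepsilon\}$ contains a half-line $(-\infty,T_\nu]$, so $\inf\{t:\mathrm{dist}(u_\nu(t),c^-)\geq\varepsilon\}=-\infty$, not a finite shift drifting to $-\infty$. Even a corrected distance-based anchor (say, the last exit from the $\varepsilon$-ball around $c^-$ going backward from $t=0$) is delicate because the trajectory may cross the $\varepsilon$-sphere more than once. The reference (and the paper's own proof of Theorem \ref{FGellcase}) avoids this by a level-set anchor: pick a regular value $a$ of $f$ with $f(c^-)<a<f(p)$ and define $r_\nu$ by $f(u_\nu(r_\nu))=a$, which is well-posed because $f\circ u_\nu$ is strictly decreasing on non-constant trajectories. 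With that substitution, the remaining steps you outline --- iterated $C^{\infty}_{loc}$ extraction, matching of asymptotes via no-energy-loss and exponential decay, diagonalization, and termination --- go through and match the cited argument. One further point worth stating explicitly rather than folding into the ``phantom piece'' remark: the claim that $c^-=p$ and $c^+=q$ forces convergence in $\mathcal{M}^f_{p,q}$ requires upgrading $C^{\infty}_{loc}$ convergence to convergence in the $\mathcal{P}^{1,2}_{p,q}/\R$ topology, which is exactly where the exponential decay near non-degenerate rest points is used.
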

%\begin{proof}
%\cite{frauenfelder-nicholls}, p. 21-22, 25-34.
%\end{proof}}

\begin{theorem}(\cite{schwarz}, p. 64-67 and \cite{frauenfelder-nicholls}, p. 25) \label{H case FG convergence thm}
For a sequence of negative gradient flow lines $(u_{\nu})_{\nu}$ from critical point $p\in Cr f^\alpha$ to critical point $q\in Cr f^\beta$ in $\mathcal{M}^{H^{\alpha, \beta}}_{p,q}$, there is a subsequence $(u_{\nu_{j}})_{\nu_{j}}$ and a broken gradient flow line $y=\{u^{k}\}_{1\leqslant k\leqslant n}$ from $p$ to $q$ such that $u_{\nu_{j}}\stackrel{F-G}{\longrightarrow}y$ {if $u_{\nu_{j}}$ does not converge in $\mathcal{M}^{H^{\alpha, \beta}}_{p,q}$}.
\end{theorem}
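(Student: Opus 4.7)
The plan is to combine the local compactness of Lemma \ref{H C loc inf conv} with an energy-bounded bubbling-off iteration, adapting the autonomous arguments in \cite{schwarz} and \cite{frauenfelder-nicholls}. The decisive structural input is that $H$ is finite in $t$: the nontrivial time dependence is confined to $[-T,T]$, so outside this window the equation reduces to the autonomous flows of $f^\alpha$ and $f^\beta$, for which the compactness theory is already available. First I would apply Lemma \ref{H C loc inf conv} to extract a $C^\infty_{\mathrm{loc}}$-convergent subsequence $u_{\nu_j}\to w$ with $w\in C^\infty(\mathbb{R},M)$ satisfying $\dot w(t)=-\nabla_{G(t)}H(t,w(t))$. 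The energy $\int_{\mathbb{R}}|\dot u_\nu|^2\,dt$ is uniformly bounded by $H(-\infty,p)-H(+\infty,q)$ plus a controlled contribution from $\partial_t H$ supported in $[-T,T]$; passing to the limit, $w$ has finite energy, so it is asymptotic to some $p^{\ast}\in Cr f^{\alpha}$ and $q^{\ast}\in Cr f^{\beta}$.

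Second, I would dichotomize. If $p^{\ast}=p$ and $q^{\ast}=q$, then exponential decay near the nondegenerate critical points upgrades the $C^\infty_{\mathrm{loc}}$ convergence to convergence in the topology of $\mathcal{M}^{H^{\alpha,\beta}}_{p,q}$, contradicting the hypothesis. Otherwise at least one end escapes, and the total energy splits as the energy of $w$ plus a positive residual bounded below by the smallest gap between critical values of $f^\alpha$ or $f^\beta$. To capture this lost mass, pick $r_\nu\to-\infty$ (say $p^{\ast}\neq p$) such that $u_\nu(r_\nu)$ sits at fixed distance from every critical point; for $|r_\nu|$ sufficiently large the shifted trajectory $u_\nu(\cdot+r_\nu)$ solves the autonomous $f^\alpha$-equation, and a further $C^\infty_{\mathrm{loc}}$-extraction yields an autonomous gradient line $u^1$ from $p$ to some intermediate critical point. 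Iterating on each free end terminates after finitely many steps because the uniform energy bound permits only finitely many fragments, each consuming at least the minimal critical-value gap. The resulting $n$-tuple $y=\{u^k\}_{1\le k\le n}$ then has exactly one nonautonomous piece carrying $[-T,T]$, with autonomous $f^\alpha$-lines to its left and autonomous $f^\beta$-lines to its right, as required by the definition of a broken gradient flow line for $(H,G)$.

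The main obstacle will be the precise energy identity in the nonautonomous setting together with the accompanying $C^0$ no-return lemma: one must verify that when $p^{\ast}\neq p$ the approximants $u_\nu$ linger near $p$ for a controllable duration before escaping, so that the translation parameters $r_\nu$ can actually be chosen to pin down a genuine intermediate critical point rather than drifting through Morse charts. A related technicality is the matching of asymptotic critical points across consecutive pieces, so that $y$ is a legitimate broken trajectory from $p$ to $q$. Both points are handled in the autonomous literature and adapt with only cosmetic changes because the $t$-dependence of $H$ is compactly supported; the care required is essentially bookkeeping, tracking when a shifted subsequence sits in the autonomous window versus the nonautonomous one.
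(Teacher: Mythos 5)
Your proposal follows essentially the same skeleton as the paper's imported argument (from \cite{schwarz} and \cite{frauenfelder-nicholls}, which the paper adapts directly for the parametric version in Theorem \ref{FGellcase}): $C^\infty_{\mathrm{loc}}$ extraction via Lemma \ref{H C loc inf conv}, identification of the limit's asymptotics, a dichotomy between genuine convergence and escape to a different critical point, and a reparametrize-and-re-extract iteration supported by the fact that the $t$-dependence of $H$ is compactly supported. The one place you diverge from the paper's mechanism is in how you pin down the translation parameters and guarantee termination: the paper chooses a regular value $a$ of $f^\alpha$ (or $f^\beta$) strictly between the two critical values and sets $r_j$ so that $f^\alpha(u_{\nu_j}(r_j))=a$, which automatically prevents the bubble from being a constant trajectory and makes termination follow simply from the finiteness of $Cr f^\alpha \cup Cr f^\beta$; you instead pick $r_\nu$ so that $u_\nu(r_\nu)$ lies at a fixed positive distance from all critical points and invoke the uniform energy bound to rule out infinitely many fragments. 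Both are standard and equivalent, but the regular-value route is tidier on exactly the two points you flag as obstacles: it makes the non-constancy of each intermediate trajectory automatic (a regular level set meets no critical points), and it sidesteps most of the ``no-return'' bookkeeping because the Morse function is strictly monotone along nonconstant flow lines. Your reading of where the technicalities lie is accurate, and the appeal to finiteness of the time support of $\partial_t H$ is precisely the structural input the paper also relies on.
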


%$f_{t}^{s}$ case....................................................................................................
Now we turn to our $H_{[\ell]}$ with parameters $s^{\underline{\ell}},\cdots, s^{\underline{1}}$.
%\textcolor{blue}{The following proposition helps to prove the Lemma \ref{H[1] C loc inf conv}.}
%
%\begin{proposition}
%\textcolor{blue}{For any $(s,u)\in \mathcal{M}_{p,q}^{H_{[1]}^{\alpha,\beta}}$, the energy $E(s,u)$ of $(s,u)$
% $$E(s,u):=E_{G_{[1]}}(s,u)=\int_{-\infty}^{+\infty}G_{[1]}(\partial_{t}u,\partial_{t}u)dt$$ can be uniformly estimated which means that there exist a constant $c=c(H_{[1]}^{\alpha,\beta})>0$ such that $$E(s,u)<c,\ \forall (s,u)\in \mathcal{M}_{p,q}^{H_{[1]}^{\alpha,\beta}}.$$}
%\end{proposition}
%
%\begin{proof}
%\textcolor{blue}{Because $s\in [0,1]$ and $[0,1]$ is compact, the proof is exactly similar to the situation of $H$ (Proposition \ref{H energy unif estim}).}
%\end{proof}
%
%\indent \textcolor{blue}{Next one can use the energy estimate to prove the equicontinuity of {the sequence $\{(s,u)_{\nu}\}_{\nu}$} which is then used to prove the following lemma.}
Similarly, we have
\begin{lemma} \label{H[1] C loc inf conv}
{For any sequence $\{(\mathbf{s},u)_{\nu}\}_{\nu}=\{(\mathbf{s_{\nu}},u_{\nu})\}_{\nu}$ in $\mathcal{M}_{p,q}^{H_{[\ell]}^{\alpha,\beta}}$, there exists a convergent subsequence $\{(\mathbf{s},u)_{\nu_{j}}\}$ under $C_{loc}^{\infty}$:
$$(\mathbf{s},u)_{\nu_{j}}\stackrel{C_{loc}^{\infty}}{\longrightarrow}(\mathbf{s_{0}},w)\in [0,1]^{\ell}\times C^{\infty}(\mathbb{R},M)$$}
\end{lemma}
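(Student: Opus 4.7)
The plan is to reduce to Lemma \ref{H C loc inf conv} by first exploiting compactness of the finite-dimensional parameter cube $[0,1]^{\ell}$ to extract a limit of the parameters, and then to adapt the Arzela--Ascoli plus bootstrap argument to the parametrized setting. By sequential compactness of $[0,1]^{\ell}$, first pass to a subsequence (still indexed by $\nu_j$) so that $\mathbf{s}_{\nu_j} \to \mathbf{s}_0 \in [0,1]^{\ell}$. Along this subsequence each $u_{\nu_j}$ solves
\[
\dot{u}_{\nu_j}(t) = -\nabla_{G_{[\ell]}(\mathbf{s}_{\nu_j}, t)} H_{[\ell]}(\mathbf{s}_{\nu_j}, t, u_{\nu_j}(t)),
\]
with asymptotic conditions $u_{\nu_j}(-\infty)=p$, $u_{\nu_j}(+\infty)=q$.

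Next I would establish uniform $C^k$ bounds on every compact time interval. Because $H_{[\ell]}$ and $G_{[\ell]}$ are smooth on $[0,1]^{\ell} \times \R \times M$ and coincide with the fixed MS data outside $|t|\leq T$, and because $M$ is compact, on any $[-R,R]$ the right-hand side of the ODE is uniformly bounded in every $C^k$-norm by a constant depending only on $R$, $k$ and the chosen higher homotopy, independent of $\nu_j$. This yields uniform bounds on $\|u_{\nu_j}\|_{C^{k+1}([-R,R])}$ for every $k$. Applying Arzela--Ascoli together with a diagonal argument over an exhaustion $R_m\to\infty$ and $k\to\infty$, one extracts a further subsequence such that $u_{\nu_j}\to w$ in $C^0_{loc}(\R,M)$ for some continuous limit $w$.

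The final step is to bootstrap to $C^{\infty}_{loc}$ exactly as in Lemma \ref{H C loc inf conv}. Substituting the known $C^k_{loc}$ convergence of $u_{\nu_j}$ together with the convergence $\mathbf{s}_{\nu_j}\to \mathbf{s}_0$ into the equation and its $t$-derivatives, and using smoothness of the right-hand side jointly in $(\mathbf{s},t,u)$, one upgrades $C^k_{loc}$ convergence to $C^{k+1}_{loc}$ convergence. Iterating gives $u_{\nu_j}\to w$ in $C^{\infty}_{loc}(\R,M)$, and together with $\mathbf{s}_{\nu_j}\to \mathbf{s}_0$ this delivers the desired convergence $(\mathbf{s}_{\nu_j},u_{\nu_j})\to (\mathbf{s}_0,w)$ in $[0,1]^{\ell}\times C^{\infty}(\R,M)$.

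The main obstacle, which is really the only point beyond Lemma \ref{H C loc inf conv}, is to verify that the moving parameters $\mathbf{s}_{\nu_j}$ do not spoil the uniform estimates underlying Arzela--Ascoli and the bootstrap. This is handled precisely by the joint smoothness of $(H_{[\ell]},G_{[\ell]})$ in all variables and the compactness of the cube $[0,1]^{\ell}$: differences in $\mathbf{s}$ contribute only $O(|\mathbf{s}_{\nu_j}-\mathbf{s}_0|)$ errors in each $C^k$-norm, which tend to zero uniformly on any compact time interval, so the parameter dependence is harmless in the limiting argument.
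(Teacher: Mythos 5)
Your proposal is correct and follows essentially the same route the paper sketches: extract a convergent subsequence of parameters by compactness of $[0,1]^{\ell}$, apply Arzela--Ascoli for $C^0_{loc}$ convergence, and bootstrap using the equation, with the smooth joint dependence of $(H_{[\ell]},G_{[\ell]})$ on the parameters supplying the uniform bounds. You merely spell out in more detail the steps the paper compresses into a two-sentence reference to Lemma~\ref{H C loc inf conv}.
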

\begin{proof}
\indent We use the Arzela-Ascoli theorem to prove the $C_{loc}^{0}$ convergence, and improve the smoothness via bootstrap for the elliptic equation. The main difference from the Lemma \ref{H C loc inf conv} is the smooth dependence of $H_{[\ell]}^{\alpha,\beta}$ and $G_{[\ell]}^{\alpha,\beta}$ on the parameters which cause no further difficulty for the convergence due to the compactness of the parameter space according to the fundamental theorem for the system of ordinary differential equations. Since the proof is similar to the Lemma \ref{H C loc inf conv}, we omit it.
\end{proof}

Now we define broken gradient flow line and F-G convergence in terms of $H_{[\ell]}$ case.
\begin{definition}
{ An $n (\geqslant {2})$-tuple $\{u^{1},\cdots,(\mathbf{s_{0}},u^{k_{0}}),\cdots,u^{n}\}{_{1\leqslant k\leqslant n}}$ is called a broken gradient flow line (abbr. broken line) of $n$-fold % 这个broken line 有n 段
from the critical point $p$ to the critical point $q$ for the pair $(H_{[\ell]},G_{[\ell]})$ if the followings hold:\\
(1) $\forall k$, $u^{k}$ is not a constant gradient flow line;\\
(2) There exist a $1\leqslant k_{0}\leqslant n$, $\mathbf{s_{0}}\in[0,1]^{\ell}$ and critical points $c_{0}=p, c_{1}, \cdots, c_{n}=q$ such that  $c_{k}\in Cr f^{\alpha}$($k<k_{0}$); $c_{k}\in Cr f^{\beta}$($k\geqslant k_{0}$), and $u^{k}\in \mathcal{M}^{f^{\alpha}}_{c_{k-1},c_{k}}$($k<k_{0}$); $(\mathbf{s_{0}},u^{k_{0}})\in \mathcal{M}^{H_{[\ell]}^{\alpha,\beta}}_{c_{k_{0}-1},c_{k_{0}}}$; $u^{k}\in \mathcal{M}^{f^{\beta}}_{c_{k-1},c_{k}}$($k>k_{0}$).}

{ We say the sequence $(\mathbf{s_{\nu}},u_{\nu})_{\nu}$ Floer-Gromov (F-G) converges to the broken line
$y=\{u^{1},\cdots,(\mathbf{s_{0}},u^{k_{0}}),\cdots,u^{n}\}$%_{1\leqslant k\leqslant n}$
%$y=\{u^{k}\}_{1\leqslant k\leqslant n}$
 if
%for any $1\leqslant k\leqslant n$,
there exits a $k_{0}\in \{k\in\mathbb{N}\big| 1\leqslant k\leqslant n\}$ and a sequence of $(r_{\nu}^{k})_{\nu}$ and $r_{\nu}^{k_{0}}=0$ such that $(r_{\nu}^{k})_{\star}u_{\nu}\stackrel{C_{loc}^{\infty}}{\longrightarrow}u^{k}$, for any $1\leqslant k\leqslant n$.}
%s不用趋于s_{0}吧？？
\end{definition}

\begin{theorem}\label{FGellcase}
For a sequence of negative gradient flow lines $(\mathbf{s}_{\nu},u_{\nu})_{\nu}$ in $\mathcal{M}^{H_{[\ell]}^{\alpha, \beta}}_{p,q}$,  there is a subsequence $(\mathbf{s}_{\nu_{j}},u_{\nu_{j}})_{\nu_{j}}$ and a broken gradient flow line $y=\{u^{1},\cdots,u^{k_{0}-1},(\mathbf{s_{0}},u^{k_{0}}),u^{k_{0}+1},\cdots,u^{n}\}$ from critical point $p$ to critical point $q$ such that $(\mathbf{s}_{\nu_{j}},u_{\nu_{j}})\stackrel{F-G}{\longrightarrow}y$ {if $(\mathbf{s}_{\nu_{j}},u_{\nu_{j}})$ does not converge in $\mathcal{M}^{H_{[\ell]}}_{p,q}$}.
%其中自然边界含在没紧化的$\mathcal{M}$中，所以包含在上述if not情况里了.
%\textcolor{blue}{has the case natual boundary (0or1,u) only one line!?} 上面thm已经说了k_{0}可以为1，1\leqslant k_{0}\leqslant n;n>0\in\Z
\end{theorem}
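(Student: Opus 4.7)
The plan is to combine Lemma \ref{H[1] C loc inf conv} with the two previously established compactness theorems (the autonomous and the unparametrized nonautonomous versions), treating the parameters $\mathbf{s}$ as additional variables that enjoy continuous dependence. The key observation is that because $H_{[\ell]}$ is finite, it coincides with $f^\alpha$ for $t<-T$ and with $f^\beta$ for $t>T$ independently of $\mathbf{s}$, so bubbling at either end of the $t$-axis lands in the already-understood autonomous theory.

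First I would apply Lemma \ref{H[1] C loc inf conv} to the given sequence $(\mathbf{s}_\nu,u_\nu)$ to obtain a subsequence, still denoted $(\mathbf{s}_{\nu_j},u_{\nu_j})$, converging in $C^\infty_{\mathrm{loc}}$ to some $(\mathbf{s}_0,w)\in [0,1]^\ell \times C^\infty(\R,M)$. Passing to the limit in the gradient flow equation and using smooth dependence of $H_{[\ell]}$ and $G_{[\ell]}$ on $\mathbf{s}$ (together with the fundamental ODE theorem with parameters), $w$ is a solution to $\dot w = -\nabla_{G_{[\ell]}(\mathbf{s}_0,t)}H_{[\ell]}(\mathbf{s}_0,t,w)$. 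Standard energy/asymptotic estimates of the type used in Theorem \ref{H case FG convergence thm} show that $w$ is asymptotic at $\pm\infty$ to critical points $c_{k_0-1}\in Cr f^\alpha$ and $c_{k_0}\in Cr f^\beta$, so $(\mathbf{s}_0,w)\in \mathcal{M}^{H_{[\ell]}^{\alpha,\beta}}_{c_{k_0-1},c_{k_0}}$. If $(c_{k_0-1},c_{k_0})=(p,q)$ and no reparametrization is needed, then $(\mathbf{s}_{\nu_j},u_{\nu_j})$ converges in $\mathcal{M}^{H_{[\ell]}}_{p,q}$, contradicting the hypothesis; hence the limit identifies the distinguished middle piece $(\mathbf{s}_0,u^{k_0}):=(\mathbf{s}_0,w)$ of the broken line, and we set $r_{\nu_j}^{k_0}=0$.

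To recover the remaining pieces I would iterate to the left and to the right. For the left side, if $c_{k_0-1}\neq p$ there must be a choice of translations $r_{\nu_j}^{k_0-1}\to -\infty$ along which $(r_{\nu_j}^{k_0-1})_\star u_{\nu_j}$ fails to converge to a constant; eventually $r_{\nu_j}^{k_0-1}<-T$, so on every compact time interval the flow equation satisfied by $(r_{\nu_j}^{k_0-1})_\star u_{\nu_j}$ is exactly the autonomous equation of the pair $(f^\alpha,g^\alpha)$, independently of $\mathbf{s}_{\nu_j}$. Applying Theorem \ref{H case FG convergence thm} (or rather its autonomous precursor) to this translated subsequence yields an $f^\alpha$-trajectory $u^{k_0-1}\in \mathcal{M}^{f^\alpha}_{c_{k_0-2},c_{k_0-1}}$. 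Repeating the same extraction produces $u^{k_0-2},\dots,u^1$ with asymptotic endpoints $c_{k_0-3},\dots,c_1,c_0=p$, and the process terminates after finitely many steps because each piece is non-constant and drops the value of $f^\alpha$ by a definite positive amount while $f^\alpha$ has only finitely many critical values. The right side is symmetric, using $r_{\nu_j}^k\to+\infty$ and the autonomous pair $(f^\beta,g^\beta)$ for $t>T$, producing $u^{k_0+1},\dots,u^n$ ending at $c_n=q$.

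The main obstacle I expect is the bookkeeping that glues the parametrized middle piece to the autonomous tails: one must ensure that the translations $r_{\nu_j}^k$ for $k\neq k_0$ can be chosen so that, on the one hand, they push into the region $|t|>T$ where $H_{[\ell]}$ is autonomous so that the results of \cite{schwarz,frauenfelder-nicholls} apply verbatim, and on the other hand, the extracted limits genuinely account for all of the non-compact energy, so that the asymptotics $c_0=p$ and $c_n=q$ are really reached. This is the same energy-quantization/exhaustion argument as in the unparametrized nonautonomous case, adapted in the obvious way: at each extraction step one verifies that the residual energy drops by a uniformly positive amount, which forces termination. Smooth parameter dependence, already handled in Lemma \ref{H[1] C loc inf conv}, does not interfere with this inductive scheme because only the middle piece sees the parameter, and its role is fixed once $\mathbf{s}_0$ is identified. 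Collecting the data $\{u^1,\dots,u^{k_0-1},(\mathbf{s}_0,u^{k_0}),u^{k_0+1},\dots,u^n\}$ and the translations $(r_{\nu_j}^k)$ gives the desired broken line and the F-G convergence.
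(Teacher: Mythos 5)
Your proposal is correct and follows essentially the same route as the paper's proof: apply Lemma~\ref{H[1] C loc inf conv} to extract the $C^\infty_{\mathrm{loc}}$ limit $(\mathbf{s}_0,w)$ as the distinguished middle piece, observe that $H_{[\ell]}$ becomes autonomous for $|t|>T$ so the tail extraction reduces to the known $(f^\alpha,g^\alpha)$ and $(f^\beta,g^\beta)$ theory, and terminate by finiteness of critical points. The only cosmetic difference is in how the reparametrizations $r^k_{\nu_j}$ are pinned down: the paper fixes a regular value $a$ of $f^\beta$ (resp.\ $f^\alpha$) strictly between consecutive critical levels and chooses $r_j$ so that $f^\beta(\widetilde u_j(0))=a$, while you appeal to a more loosely stated energy-quantization argument; these are two standard, interchangeable ways of organizing the same exhaustion step.
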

%\textcolor{blue}{Inspired by \cite{schwarz} (p. 64-67) and \cite{frauenfelder-nicholls} (p. 25-34), the theorem can be proved and here we omit the proof.}
\begin{proof}
{We follow \cite{schwarz} (p. 64-67) and \cite{frauenfelder-nicholls} (p. 25-34),  and the proof is similar. % to that of Theorem \ref{H case FG convergence thm}.
By Lemma \ref{H[1] C loc inf conv}, there exists a subsequence $\{(\mathbf{s}_{\nu_{j}},u_{\nu_{j}})\}_{j}$ and $(\mathbf{s_{0}},w)\in [0,1]^{\ell}\times C^{\infty}(\mathbb{R},M)$ such that
$$(\mathbf{s}_{\nu_{j}},u_{\nu_{j}})\stackrel{C_{loc}^{\infty}}{\longrightarrow}(\mathbf{s_{0}},w),$$
which means that $\dot{u}_{\nu_{j}}(t)=-\nabla_{G_{[\ell](\mathbf{s}_{\nu_{j}},t)}}H_{[\ell]}(\mathbf{s}_{\nu_{j}},t,u_{\nu_{j}}(t))$ holds and for $\forall R>0$, we have $(\mathbf{s}_{\nu_{j}},u_{\nu_{j}}(t))\big|_{[-R,R]}\stackrel{C^{\infty}}{\longrightarrow}(\mathbf{s_{0}},w(t))\big|_{[-R,R]}$.
Besides, $(\mathbf{s_{0}},w)$ satisfies $\dot{w}(t)=-\nabla_{G_{[\ell]}(\mathbf{s_{0}},t)}H_{[\ell]}(\mathbf{s_{0}},t,w(t))$.
% (\cite{schwarz}, p. 64-67).
However, $(\mathbf{s_{0}},w(t))$ does not belong to $\mathcal{M}^{H_{[\ell]}^{\alpha, \beta}}_{p,q}$ but $\mathcal{M}^{H_{[\ell]}^{\alpha, \beta}}_{c_{1},c_{2}}$, where $c_{1}\in Crf^{\alpha}$, $c_{2}\in Crf^{\beta}$ since $H_{[\ell]}\equiv f^{\alpha}(t<-T) $ and $H_{[\ell]}\equiv f^{\beta} (t>T)$.
Additionally, $indc_{1}\leqslant indp$ and $indc_{2}\geqslant indq$. Let {$(\mathbf{s_{0}},u^{k_{0}}(t))=(\mathbf{s_{0}},w(t))$}. If $c_{2}\neq q$ and $c_{2}\in Crf^{\beta}$, we can choose any regular value $a\in [f^{\beta}(q),f^{\beta}(c_{2})]$ of $f^{\beta}$.
There exists a sequence $\big\{r_{j}\big\}_{j}$ and  $\widetilde{u_{j}}:=r_{j\star}u_{\nu_{j}}$ such that $f^{\beta}(\widetilde{u_{j}}(0))=a$ holds for any $j$.
By Lemma \ref{H[1] C loc inf conv}, there exists $\widetilde{w}$ such that $\widetilde{u_{j_{\ell}}}\stackrel{C_{loc}^{\infty}}{\longrightarrow}\widetilde{w}$, and
{$ind c_{2}\geqslant ind \widetilde{w}(-\infty)$, $ind\widetilde{w}(+\infty)\geqslant indq$}. {The case that $c_{1}\neq p \in Crf^{\alpha}$ can be considered in the same way.}

Now it is clear that if the index inequality holds strictly, we can always choose a regular value inbetween until they coincide, and the whole process stops in finite steps since $\sharp (Crf^{\alpha}\cup Crf^{\beta})<+\infty$.
% leads to the cease in finite steps. Moreover, if the process does not attain end to end, there exists a gap which we can keep doing the above process which is a contradiction to stop the process in finite steps.\\

To conclude, there exist $u^{1},\cdots,u^{k_{0}-1},(s_{0},u^{k_{0}}),u^{k_{0}+1},\cdots,u^{n}$ and $\lim\limits_{t\rightarrow +\infty}u^{i}(t)=\lim\limits_{t\rightarrow -\infty}u^{i+1}(t),\, (i=1,\cdots,n-1$) such that $$u_{\nu_{j}}\stackrel{F-G}{\longrightarrow}y.$$
}
% in fact, w^i is the corresponding u^i.
\end{proof}

{Define the compactified moduli space $\overline{\mathcal{M}}_{p,q}^{H_{[\ell]}}$ to be the union $\mathcal{M}_{p,q}^{H_{[\ell]}}\  \bigcup$ $\big\{y \big| u_{\nu_{j}}\stackrel{F-G}{\longrightarrow}y, \ \forall u_{\nu_{j}}\in \mathcal{M}_{p,q}^{H_{[\ell]}} \big\}$.}

Now one can describe the boundary structures of the compactified moduli space %of \textcolor{red}{\it dimension one???}
which turns out to be of the following two types:%this is thm2 of Schwartz regularity.
%边界的样子

{
$(i)$ the natural boundary}\begin{equation}
\begin{aligned}
 generic:\, \, &(\mathbf{s},u^{k_{0}}),\,\,\, \\
 \, &where\, \mathbf{s}\in \partial[0,1]^{\ell}\, {s.t.\, G_{[\ell]}(\mathbf{s},\cdot)\, is \, generic}\ and\ (\mathbf{s},u^{k_{0}})\in \mathcal{M}_{p,q}^{H_{[\ell]}}; \\
 non-genric:\, \, &u^{1},\cdots,u^{k_{0}-1},(\mathbf{s},u^{k_{0}}),u^{k_{0}+1},\cdots,u^{n},\,\,\, \\
 \, &where\, \mathbf{s}\in \partial[0,1]^{\ell}\, s.t.\, G_{[\ell]}(\mathbf{s},\cdot)\,is \, not\ generic,\,\, {\exists\ some\ }n\in\mathbb{N}_{>1},\ \\
 \, &{\ c_{0},\cdots,c_{k_{0}-1}\in Crf^{\alpha},\  c_{k_{0}},\cdots,c_{n}\in Crf^{\beta},\  u^{i}\in \mathcal{M}^{f^{\alpha}}_{c_{i-1},c_{i}}(i=1,\cdots,k_{0}-1),} \\
 \, &{(\mathbf{s},u^{k_{0}})\in \mathcal{M}_{p,q}^{H_{[\ell]}},\ u^{i}\in \mathcal{M}^{f^{\beta}}_{c_{i-1},c_{i}}(i=\cdots,k_{0}+1,\cdots,n);} \nonumber
\end{aligned}
\end{equation}
%\begin{equation}
%\begin{aligned}
% generic:\, \, &(\mathbf{s},u^{k_{0}}),\,\,\,\mathbf{s}\in \partial[0,1]^{\ell}\, s.t.\, G_{[\ell]}(\mathbf{s},\cdot)\, is \, generic; \\
% not\ genric:\, \, &u^{1},\cdots,u^{k_{0}-1},(\mathbf{s},u^{k_{0}}),u^{k_{0}+1},\cdots,u^{n},\,\,\,\mathbf{s}\in \partial[0,1]^{\ell}\, s.t.\, G_{[\ell]}(\mathbf{s},\cdot)\,is \, not\ generic,\,\, \textcolor{red}{\exists\ some\ }n\in\mathbb{N}_{>0}; \nonumber
%\end{aligned}
%\end{equation}
%\begin{equation}
%\begin{aligned}
% generic:\,
%  \, &u^{1},\cdots,u^{k_{0}-1},(\mathbf{s},u^{k_{0}}),u^{k_{0}+1},\cdots,u^{n},\,\,\mathbf{s}\in \partial[0,1]^{\ell}\,\,generic,\,\, \textcolor{red}{\exists\ some\ }n\in\mathbb{N}_{>0}; \\
% not\ genric:\,
%  \, &u^{1},\cdots,u^{k_{0}-1},(\mathbf{s},u^{k_{0}}),u^{k_{0}+1},\cdots,u^{n},\,\,\mathbf{s}\in \partial[0,1]^{\ell}\,\,not\ generic,\,\, \textcolor{red}{\exists\ some\ }n\in\mathbb{N}_{>0}; \nonumber
%\end{aligned}
%\end{equation}
 %$$u^{1},\cdots,u^{k_{0}-1},(0,s^{\underline{\ell-1}},\cdots,s^{\underline{1}},u^{k_{0}}),u^{k_{0}+1},\cdots,u^{n},\,\,\, \textcolor{red}{\exists\ some\ }n\in\mathbb{N}_{>0};$$
 %$$u^{1},\cdots,u^{k_{0}-1},(1,s^{\underline{\ell-1}},\cdots,s^{\underline{1}},u^{k_{0}}),u^{k_{0}+1},\cdots,u^{n},\,\, \textcolor{red}{\exists\ some\ }n\in\mathbb{N}_{>0};$$
 %只有s^{\ell}=0,1是因为退化性，这就已经包含所有边界了。

{
$(ii)$ the boundary coming from the {interior} non-generic $G_{[\ell]}(\mathbf{s_{0}},\cdot)$ with $\mathbf{s_{0}}\in (0,1)^{\ell}$}
$$u^{1},\cdots,u^{k_{0}-1},(\mathbf{s_{0}},u^{k_{0}}),u^{k_{0}+1},\cdots,u^{n}, \,\,\, {\exists\ some\ }n\in\mathbb{N}_{>1},$$
%(以后要定义S，而且这个对度量有要求)
%否则如果S是无限的，则没法紧化，或者单点紧致化后形成的东西边界有聚点，就不是流形。
where {$c_{0},\cdots,c_{k_{0}-1}\in Crf^{\alpha},\  c_{k_{0}},\cdots,c_{n}\in Crf^{\beta},\  u^{i}\in \mathcal{M}^{f^{\alpha}}_{c_{i-1},c_{i}}(i=1,\cdots,k_{0}-1),\ (\mathbf{s},u^{k_{0}})\in \mathcal{M}_{p,q}^{H_{[\ell]}},\ u^{i}\in \mathcal{M}^{f^{\beta}}_{c_{i-1},c_{i}}(i=k_{0}+1,\cdots,n).$}

Here are two examples to get a flavor about the boundaries of the compactified moduli space.

$1.$ the boundary of the $1$-dimensional compactified moduli space.
Here Figure \ref{sketchboundary 1 dimensionalmodulispace} is a schematic. In the following part, the boundary of the $1$-dimensional compactified moduli space will be discussed in details.
\begin{figure}
  \centering
  \def\svgscale{0.35}
  %% Creator: Inkscape 1.1.2 (b8e25be833, 2022-02-05), www.inkscape.org
%% PDF/EPS/PS + LaTeX output extension by Johan Engelen, 2010
%% Accompanies image file '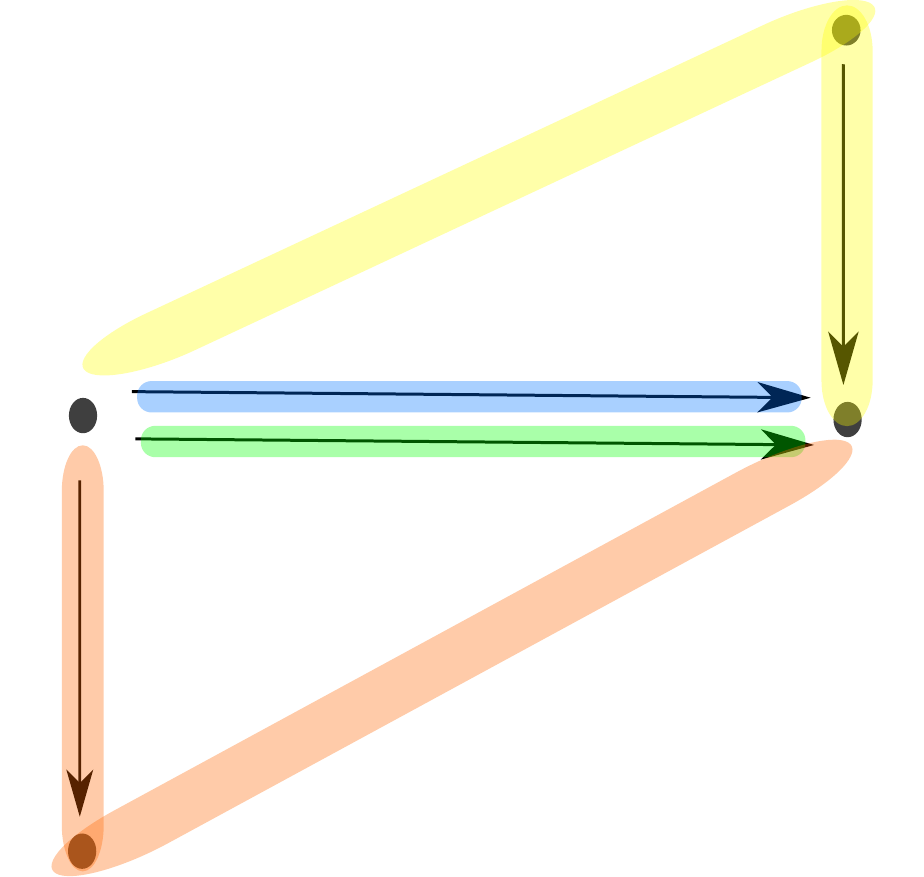' (pdf, eps, ps)
%%
%% To include the image in your LaTeX document, write
%%   \input{<filename>.pdf_tex}
%%  instead of
%%   \includegraphics{<filename>.pdf}
%% To scale the image, write
%%   \def\svgwidth{<desired width>}
%%   \input{<filename>.pdf_tex}
%%  instead of
%%   \includegraphics[width=<desired width>]{<filename>.pdf}
%%
%% Images with a different path to the parent latex file can
%% be accessed with the `import' package (which may need to be
%% installed) using
%%   \usepackage{import}
%% in the preamble, and then including the image with
%%   \import{<path to file>}{<filename>.pdf_tex}
%% Alternatively, one can specify
%%   \graphicspath{{<path to file>/}}
%% 
%% For more information, please see info/svg-inkscape on CTAN:
%%   http://tug.ctan.org/tex-archive/info/svg-inkscape
%%
\begingroup%
  \makeatletter%
  \providecommand\color[2][]{%
    \errmessage{(Inkscape) Color is used for the text in Inkscape, but the package 'color.sty' is not loaded}%
    \renewcommand\color[2][]{}%
  }%
  \providecommand\transparent[1]{%
    \errmessage{(Inkscape) Transparency is used (non-zero) for the text in Inkscape, but the package 'transparent.sty' is not loaded}%
    \renewcommand\transparent[1]{}%
  }%
  \providecommand\rotatebox[2]{#2}%
  \newcommand*\fsize{\dimexpr\f@size pt\relax}%
  \newcommand*\lineheight[1]{\fontsize{\fsize}{#1\fsize}\selectfont}%
  \ifx\svgwidth\undefined%
    \setlength{\unitlength}{265.95788281bp}%
    \ifx\svgscale\undefined%
      \relax%
    \else%
      \setlength{\unitlength}{\unitlength * \real{\svgscale}}%
    \fi%
  \else%
    \setlength{\unitlength}{\svgwidth}%
  \fi%
  \global\let\svgwidth\undefined%
  \global\let\svgscale\undefined%
  \makeatother%
  \begin{picture}(1,0.94873674)%
    \lineheight{1}%
    \setlength\tabcolsep{0pt}%
    \put(0,0){\includegraphics[width=\unitlength,page=1]{dim1modulispaceboundary.pdf}}%
    \put(-0.00691224,0.48505604){\color[rgb]{0,0,0}\makebox(0,0)[lt]{\lineheight{1.25}\smash{\begin{tabular}[t]{l}p\end{tabular}}}}%
    \put(0.95860089,0.46433156){\color[rgb]{0,0,0}\makebox(0,0)[lt]{\lineheight{1.25}\smash{\begin{tabular}[t]{l}q\end{tabular}}}}%
    \put(0,0){\includegraphics[width=\unitlength,page=2]{dim1modulispaceboundary.pdf}}%
  \end{picture}%
\endgroup%

  \caption{different types of the boundary of $1$-dimensional compactified moduli space with boundary $\partial \overline{\mathcal{M}}_{p,q}^{H_{[1]}^{\alpha,\beta}}$ of dimension zero: broken lines with respect to critical points of $f^\alpha$ and $f^\beta$ respectively}
  \label{sketchboundary 1 dimensionalmodulispace}
\end{figure}
Since the compactified moduli space is of dimension one, each component is a segment (Figure
\ref{sketchboundary 1 dimensionalmodulispace}) or a circle.
\begin{figure}
  \centering
  \includegraphics[scale=0.15]{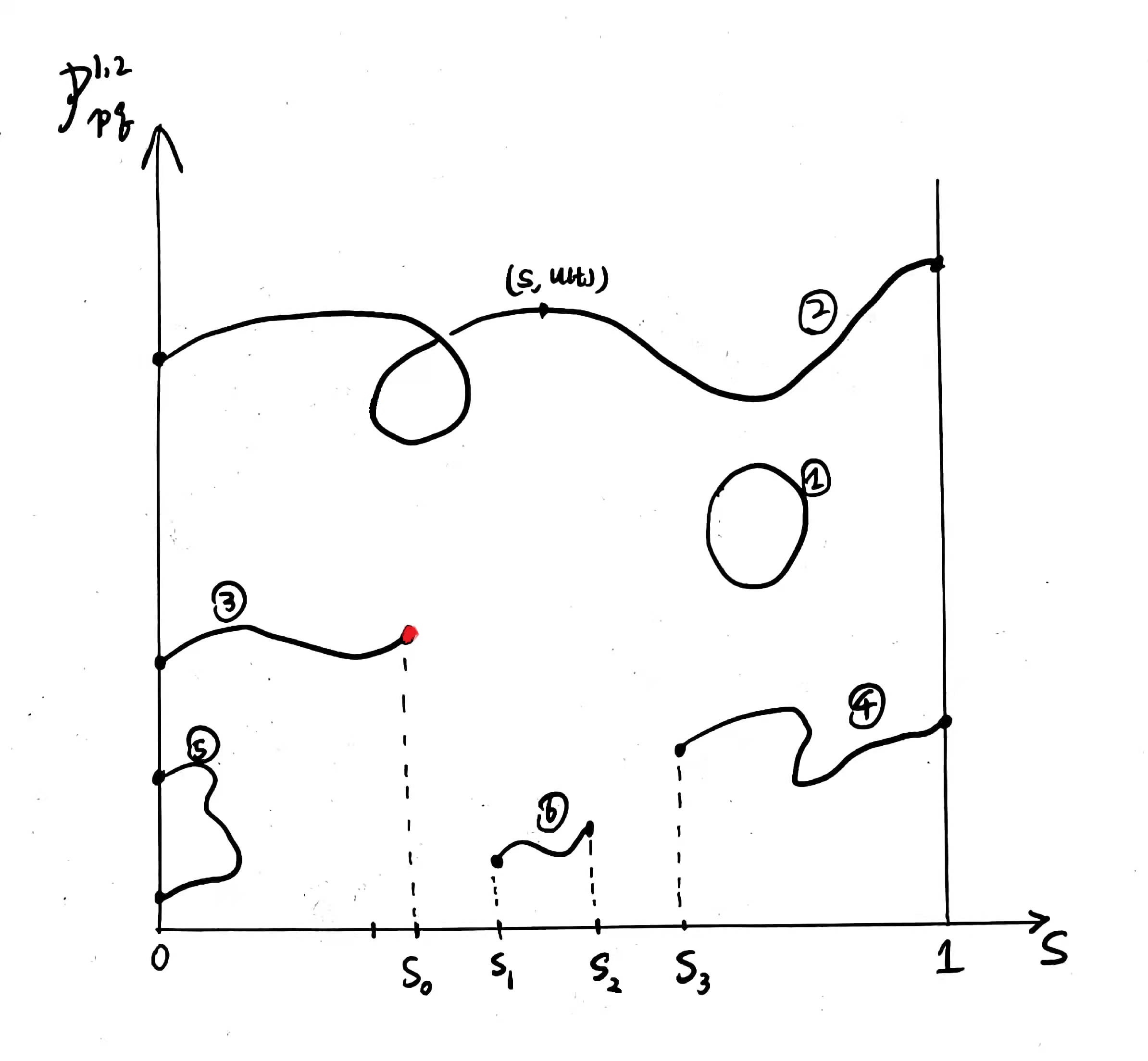}
  \caption{$1$-dimensional compactified moduli space with boundary: case with one parameter $s$ (\cite{schwarz})}
  \label{onedimmfldwithbdryofspara}
\end{figure}

Figure  \ref{onedimmfldwithbdryofspara}  is a picture we borrow from (\cite{schwarz}, p. 146). This is an example for the $1$-dimensional compactified moduli space {\it with one parameter} which turns out to be a one dimensional manifold with boundary, i.e. some segments and some circles in $[0,1]\times \mathcal{P}^{1,2}_{p,q}$. The horizontal axis represents the parameter $s\in [0,1]$, while the vertical direction represents the ambient space $\mathcal{P}^{1,2}_{p,q}$. For the circle component$\textcircled{\small{1}}$ in the figure, its boundary is empty since it is already compact; For $\textcircled{\small{2}}$, the two boundaries lie where  $s=0,1$, which are the natural boundaries; so does $\textcircled{\small{5}}$, the difference is that the boundaries for $\textcircled{\small{5}}$ are both at $s=0$. For some exceptional moments  $s=s_{0},s_{1},s_{2}$ and $s_{3}$ as shown in the figure, non-generic situations occur. This is unavoidable when we have one parameter.
%$\textcircled{\small{3}}$, $\textcircled{\small{4}}$ and $\textcircled{\small{6}}$ have some or all of their boundaries under these cases.\\

%\textcolor{red}{The boundary structures of the compactified moduli space of dimension one turns out to be of the following two types:\\
%$(i)$ the natural boundary
 %$$(0,s^{\underline{\ell-1}},\cdots,s^{\underline{1}},u^{k_{0}})\textcolor{red}{\in \mathcal{M}^{H_{[\ell]}}_{p,q}};$$
 %$$(1,s^{\underline{\ell-1}},\cdots,s^{\underline{1}},u^{k_{0}})\in \mathcal{M}^{H_{[\ell]}}_{p,q};$$
%$(ii)$ the boundary coming from the \textcolor{red}{interior} non-generic $G_{[1]}(\mathbf{s_{0}},\cdot)$ with $\mathbf{s_{0}}\in (0,1)^{\ell}$
%$$u^{1},\cdots,u^{k_{0}-1},(\mathbf{s_{0}},u^{k_{0}}),u^{k_{0}+1},\cdots,u^{n}, \,\,\, \textcolor{red}{\exists\ some\ }n\in\mathbb{N}_{>1},$$}
%where \textcolor{red}{$c_{0},\cdots,c_{k_{0}-1}\in Crf^{\alpha},\  c_{k_{0}},\cdots,c_{n}\in Crf^{\beta},\  u^{i}\in \mathcal{M}^{f^{\alpha}}_{c_{i-1},c_{i}}(i=1,\cdots,k_{0}-1),\ (\mathbf{s},u^{k_{0}})\in \mathcal{M}_{p,q}^{H_{[\ell]}},\ u^{i}\in \mathcal{M}^{f^{\beta}}_{c_{i-1},c_{i}}(i=\cdots,k_{0}+1,\cdots,n).$}

%\textcolor{red}{\indent As for the case for the one dimensional compactified moduli space with parameters $s^{\underline{\ell}},\cdots,s^{\underline{1}}$, $\ell>1$, the number of the types will not change. Therefore, when we study the boundary of the compactified moduli space of dimension one, we only need to discuss the compactified moduli space with one parameter.}

 %To understand the corner better, we consider the compactified moduli space of dimension two.\\
\indent $2.$ the boundary of the $2$-dimensional compactified moduli space with {\it two parameters} from which one will get an intuition about the corners.
%\textcolor{blue}{Figure \ref{2dim moduli} schematically describes the different types of boundaries of $2$-dimensional compactified moduli space with one parameter. The Figure \ref{2d moduli natural bdry} and Figure \ref{2d moduli notgeneric bdry} describes two possible cases of the structure of the boundary.}
%\textcolor{red}{However, under one parameter case, we are not able to obtain the whole types of the boundary ( the information of the corners can not be seen). In fact, studying the boundary of the moduli space of dimension two need at least two parameters' space which can reveal the whole information of the boundary.}
Fig \ref{a2b2inch3} and Fig \ref{c2inch3} {show the possible types of the boundary of the two-dimensional compactified moduli space with two parameters.} {More detailed analysis will be given in \S \ref{further direction sec5}. %Actually, the boundary in terms of the higher difference between the indices need to be further considered.}
\begin{figure}
  \centering
  \includegraphics[scale=0.5]{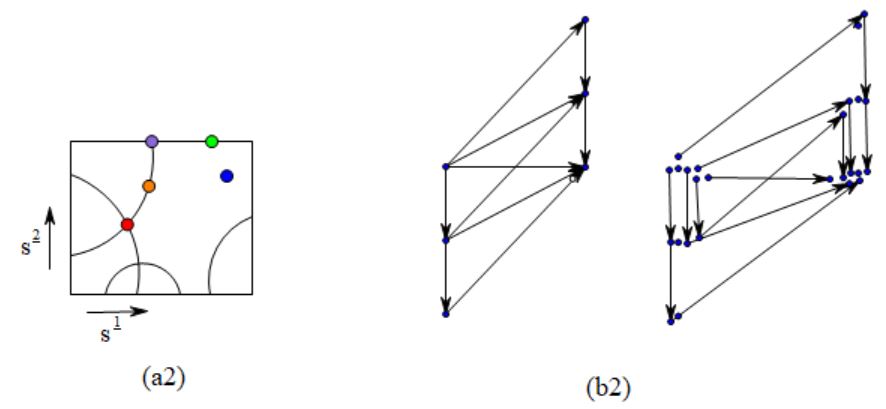}
  \caption{{Two-dimensional parameter space and different boundary types}}
  \label{a2b2inch3}
\end{figure}
\begin{figure}
  \centering
  \includegraphics[scale=0.6]{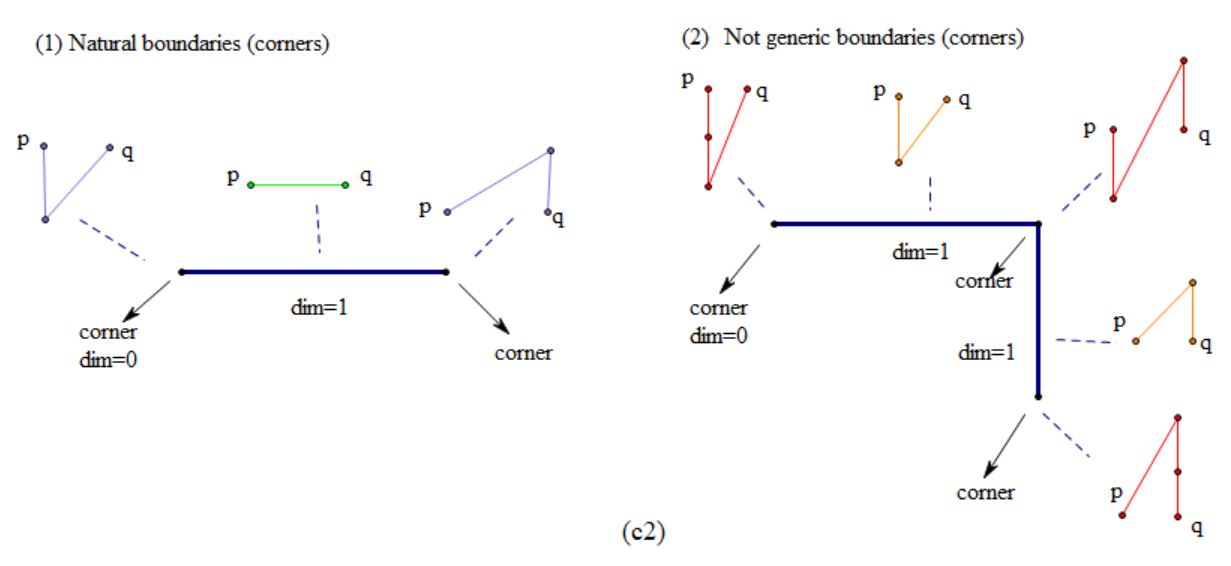}
  \caption{(c$2$) {Possible types of the boundary with $indp-indq=0$ and the dimension of $\partial \overline{\mathcal{M}}_{p,q}^{H_{[2]}^{\alpha,\beta}}$ is one}}
  \label{c2inch3}
\end{figure}

\textbf{The boundaries of the $1$-dimensional compactified moduli space}

We explore in details the boundaries of one dimensional compactified moduli space geometrically which will be responsible for the algebraic structures to be described in the following section.
As usual, $M$ is a closed manifold with $f^{\alpha}$ and $f^{\beta}$ two Morse functions on it.
The corresponding results for $H$ and $H_{[1]}$ have already appeared in, e.g., \cite{frauenfelder-nicholls} and \cite{hutchings}. % which we state briefly for the completeness.
{Now, we discuss the more general case with $H_{[\ell]}$ ($\ell\geqslant 0$).}
%\textcolor{blue}{need to write H case?}

Let $(f^{\alpha},g^{\alpha})$ and $(f^{\beta},g^{\beta})$ be two MS pairs. By {regularity and smoothness} mentioned in \S \ref{Moduli Space and its Compactification sect3}, there exists $(H_{[\ell]},G_{[\ell]})$ satisfying the following conditions. The above $H_{[\ell]}$ is finite with respect to $f^{\alpha}$ and $f^{\beta}$ and regular under $G_{[\ell]}$. At the same time, $G_{[\ell]}$ is finite with respect to $g^{\alpha}$ and $g^{\beta}$ and generic under $H_{[\ell]}$.
For two critical points $p\in Cr f^\alpha$ and $q\in Cr f^\beta$ such that $indp-indq=-\ell+1$, we have
$$dim(\mathcal{M}_{p,q}^{H_{[\ell]}})=indp-indq+\ell=1.$$
 Let critical points $c'\in Crf^{\beta}$ and $c\in Crf^{\alpha}$ satisfy $indp-indc'=indc-indq=-\ell$.
To figure out the boundaries of compactified $\overline{\mathcal{M}}_{p,q}^{H_{[\ell]}}$, we introduce the following moduli space
\begin{align}
  \mathcal{M}_{p,c'}^{H_{[\ell]}(S_{0})}=\Big\{(\mathbf{s_{0}},u(t)) \Big|  & \dot{u}(t)=-\nabla_{G_{[\ell]}(\mathbf{s_{0}},\cdot)}H_{[\ell]}(\mathbf{s_{0}},t,u(t)), \ \mathbf{s_{0}}\in S_{0}\subset [0,1]^{\ell},\ ind p-ind c'=-\ell;\nonumber \\
   & \lim\limits_{t\rightarrow -\infty}u(t)=p,\  \lim\limits_{t\rightarrow +\infty}u(t)=c' \Big\},\nonumber
\end{align}
where $S_{0}$ is a codimension one set
%finite union of codimension one manifold  这里不要加hat，因为这里是定义一般的S_0.
of the space $[0,1]^{\ell}$ of parameters $s^{\underline{\ell}},\cdots,s^{\underline{1}}$ and $\mathbf{s}=(s^{\underline{\ell}},\cdots,s^{\underline{1}})$.
As before, we define $\widehat{S}(p,c')$ to be the set of all $\widehat{\mathbf{s}}$ such that $G_{[\ell]}(\widehat{\mathbf{s}},\cdot)$ is not generic (in the sense of Proposition \ref{usefulprop}) with respect to (fixed) $p\in Crf^{\alpha}$, $c'\in Crf^{\beta}$ and $H_{[\ell]}(\widehat{\mathbf{s}},\cdot,\cdot)$. Similarly, we can define $\widehat{S}(c,q)$. Then $S_{0}$ is either $\widehat{S}(p,c')$ or $\widehat{S}(c,q)$. Moreover, we have
$$dim \mathcal{M}_{p,c'}^{H_{[\ell]}(\widehat{S}(p,c'),\cdot,\cdot)}= ind p- ind c' +1+(\ell-1){=-\ell+1+(\ell-1)=0},$$
where $ind p- ind c'=-\ell$. Here `$+1$' results from non genericity, and `$(\ell-1)$' comes from the freedom of the parameters, % 参数个数减codim1的1.
that is, $\widehat{\mathbf{s_{0}}}$ can be taken in $S_{0}$ which is a codimension one set in $[0,1]^{\ell}$.
Note that $\widehat{S}(p,c')$ could be the union of codimension one manifolds which provide part of the boundaries.
%(other part boundaries come from the natural boundary of parameter space)
The specific requirements will be introduced in \S \ref{Morse weak n pre-category}.
%\indent Need to mention that $S_{0}$ may not enough for providing the boundaries, but should be the union of such one dimensional manifold like $S_{0}$.\\
%$S_{0}$ may not be an one dimensional manifold but finite union of some one dimensional manifolds. 是一些一维流形的并（仍是一维）提供边界， 可能不只是一维流形。
%codim1 should be a requirement.

The boundaries of $\overline{\mathcal{M}}_{p,q}^{H_{[\ell]}}$ contain two parts, one of which is natural boundaries coming from the boundaries of parameter $s^{\underline{\ell}}\in[0,1]$ (Be aware that only $s^{\underline{\ell}}$ plays a role since other parameters' direction are degenerate when $s^{\underline{\ell}}=0,1$), and another is coming from the non-generic set.
Then the broken line has the following four possible types as shown schematically in Figure \ref{Ptwo}:

(i) $\{w(t)\}$,
where $w(t)\in \mathcal{M}_{p,q}^{H_{[\ell]}(0,\cdots,\cdot,\cdot)}$;

(ii) $\{w'(t)\}$,
where $w'(t)\in \mathcal{M}_{p,q}^{H_{[\ell]}(1,\cdots,\cdot,\cdot)}$;

{(iii) $\{(\mathbf{\widehat{s}_{0}},u^{1=k_{0}}(t)),u^{2}(t)\}$,
where $(\mathbf{\widehat{s}_{0}},u^{1=k_{0}}(t))\in \mathcal{M}_{p,c'}^{H_{[\ell]}},\ u^{2}(t)\in \mathcal{M}_{c',q}^{f^{\beta}}$; $\mathbf{\widehat{s}_{0}}\in \widehat{S}(p,c')$, $c'\in Crf^{\beta}$ and $ind c'= ind p+\ell$.}

{(iv) $\{u^{1}(t),(\mathbf{\widehat{s}_{1}},u^{2=k_{0}}(t))\}$,
where $u^{1}(t)\in \mathcal{M}_{p,c}^{f^{\alpha}},\ (\mathbf{\widehat{s}_{1}},u^{2=k_{0}}(t))\in \mathcal{M}_{c,q}^{H_{[\ell]}}$; $\mathbf{\widehat{s}_{1}}\in \widehat{S}(c,q)$ $c\in Crf^{\alpha}$ and $ind c= ind q-\ell$.}
So the boundary is
\begin{equation}\label{E ell}
  \partial \overline{\mathcal{M}}_{p,q}^{H_{[\ell]}}=
 \mathcal{M}_{p,q}^{H_{[\ell]}(0,\cdots,\cdot,\cdot)}\cup \mathcal{M}_{p,q}^{H_{[\ell]}(1,\cdots,\cdot,\cdot)}
\cup\bigcup\limits_{c'\in Crf^{\beta} \atop ind c'= ind p+\ell}\mathcal{M}_{p,c'}^{H_{[\ell]}(\widehat{S}(p,c'),\cdot,\cdot)}\times \mathcal{M}_{c',q}^{f^{\beta}}
\cup\bigcup\limits_{c\in Crf^{\alpha} \atop  ind c= ind q-\ell }\mathcal{M}_{p,c}^{f^{\alpha}}\times \mathcal{M}_{c,q}^{H_{[\ell]}(\widehat{S}(c,q),\cdot,\cdot)}
\end{equation}
where $\widehat{S}(p,c'),\,\widehat{S}(c,q) \subset[0,1]^{\ell}$, and both are the finite union of some one dimensional manifolds.
%内部codim2冗余，边界codim1冗余。因为是一维模空间，0维边界。对比二位模空间的情况可以在第五章。
%Here just as $H_{[1]}$ case, the requirement in \S \ref{Morse weak n pre-category} demands the finiteness to ensure we have wonderful algebraic structure.
%\textcolor{red}{One may wonder why the boundary lacks the two possibilities $\mathcal{M}_{p,q}^{H_{[2]}(\cdot,0,\cdot,\cdot)}$ and $\mathcal{M}_{p,q}^{H_{[2]}(\cdot,1,\cdot,\cdot)}$ that seem to should have appeared. The reason is due to the degeneration. Recall that $H_{[2]}(\cdot,0,\cdot,\cdot)=H_{[2]}(0,0,\cdot,\cdot)$ and $H_{[2]}(\cdot,1,\cdot,\cdot)=H_{[2]}(0,1,\cdot,\cdot)$. Thus $\mathcal{M}_{p,q}^{H_{[2]}(\cdot,0,\cdot,\cdot)}$, $\mathcal{M}_{p,q}^{H_{[2]}(\cdot,1,\cdot,\cdot)}\subset \mathcal{M}_{p,q}^{H_{[2]}(0,\cdot,\cdot,\cdot)}$.  }
\begin{figure}
\centering
\def\svgscale{0.4}
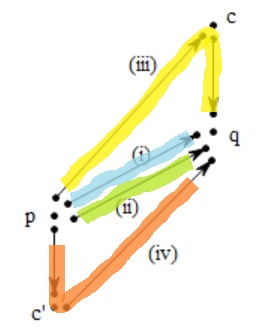
\caption{Four types of the boundary in $H_{[\ell]}$ case}
\label{Ptwo}
\end{figure}
%Via the boundary provided by the non-generic part,

Then we can define a linear map $\phi_{[\ell]}$ which finally turns out to be a higher homotopy. In \S \ref{weak infty functor},
we will discuss these higher homopoties $\phi_{[\ell]}$ and codimension sets from the non-generic part of the boundaries explicitly.

\begin{section}
  {Weak $\infty$-category}
%(up to homotopy)
  \label{Weak infty category sect4}
\end{section}
%-------------------------------------------------------- 第四章----------------------------------------------------------

As usual, let $M$ be an $n$-dimensional smooth oriented closed manifold.
In this section, we firstly define an $n$-globular set in terms of Morse-Samle pairs and {their} higher homotopies with two operations: composite and identity map satisfying certain conditions which finally turns out to be a weak $n$-category. {With $n$ going to $\infty$, we can further define an $\infty$-globular set and turn it into a weak $\infty$-category $\mathcal{A}$.}
Here "weak''  means that the associativity holds up to homotopy. And we require that the composition of morphisms remain smooth. Thus, only the morphisms that can be connected smoothly are taken into consideration.
While we have the strict $n$-category {$\mathbf{MSWCx}$} to be introduced in \S \ref{strict n cate CHCx}, we extend it trivially to a strict $\infty$-category $\mathcal{B}$. Then we can construct a weak $\infty$-functor via the boundary of the compactification of the moduli space with varies parameters of dimension one. For the notions about  $\infty$-categories, our basic references are Hohloch (\cite{hohloch}) and Leinster (\cite{leinster}), and the construction of the weak functor is inspired by Frauenfelder-Nicholls (\cite{frauenfelder-nicholls}).

%left weak \infty-category.......................................................................................
\begin{subsection}
  {Strict m-category}
  \label{strictcat}
\end{subsection}

%construct weak m-category
\begin{definition}(\cite{hohloch}, p. 13) %\textbf{(m-globular set)}
Given $m\in \mathbb{N}$, an $m$-globular set $Y$ contains a collection of sets $\{Y(\ell)\}_{\ell=0}^{m}$ together with source and target functions $\mathfrak{s},\mathfrak{t}: Y(\ell)\longrightarrow Y(\ell-1)$ satisfying $\mathfrak{s}\circ \mathfrak{s}= \mathfrak{s}\circ \mathfrak{t}$ and $\mathfrak{t}\circ \mathfrak{s}=\mathfrak{t}\circ \mathfrak{t}$ for every $0< \ell\leqslant m$. Elements in $Y(\ell)$ are called $\ell$-cells.
%copy in hohloch def 6
\end{definition}%For the given $M$, we construct an $n=dimM$-globular set $Y=\{Y(\ell)\big| 0 \leqslant \ell\leqslant n\}$ and source and target maps $\mathfrak{s},\mathfrak{t}: Y(\ell)\longrightarrow Y(\ell-1)$.
\begin{figure}
\centering
\def\svgscale{0.5}
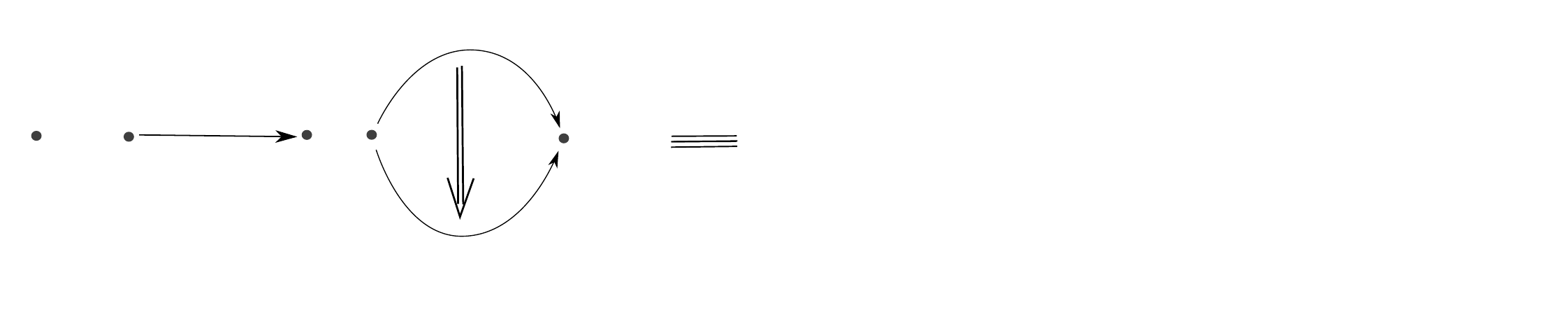
\caption{Cells, Sources and Targets}
\end{figure}

{\begin{definition}
A map or a morphism $f$ between two $m$-globular sets $(Y,\mathfrak{s},\mathfrak{t})$ and $(Y',\mathfrak{s}',\mathfrak{t}')$  {consists of} $f(A)\in Y'(\ell),\, \forall A\in Y(\ell),\, \ell=1,2,\cdots,m$ {such that $f\mathfrak{s}=\mathfrak{s}'f,\, f\mathfrak{t}=\mathfrak{t}'f$}.
\end{definition} }

For $\ell\in \Z_{>0}, \, 0\leqslant p<\ell$, we define $$Y(\ell)\times_{Y(p)} Y(\ell):=\Big\{ (A_{[\ell]2}, A_{[\ell]1})\in Y(\ell)\times Y(\ell)\Big|\mathfrak{s}^{\ell-p}(A_{[\ell]2})=\mathfrak{t}^{\ell-p}(A_{[\ell]1}) \Big\}$$ and we say that two $\ell$-cells $A_{[\ell]2}$ and $A_{[\ell]1}$ can be glued together along a common $p$-cell $A_{[p]}$:
$\mathfrak{s}^{\ell-p}(A_{[\ell]2})=\mathfrak{t}^{\ell-p}(A_{[\ell]1})=A_{[p]}.$ %Sometimes for simplify the notation, we denote $A_{l}_{1}\times_{A_{p}} A_{l}_{2}$ as $A_{l}_{1}\times_{p} A_{l}_{2}$

\begin{definition}\label{strict m cate def}(\cite{hohloch}, p. 15) %\textbf{(strict n category)}
Let $m\in \mathbb{N}$. A strict $m$-category $\mathcal{Y}$ is an $m$-globular set $Y$ together with

$\bullet$ A function $\circ_{p}: Y(\ell)\times_{Y(p)} Y(\ell)\longrightarrow Y(\ell)$ for each $0\leqslant p < \ell \leqslant m$, and $\circ_{p}(A',A):= A'\circ_{p}A$ is called a composite of $A'$ and $A$ along some $p$-cell;

$\bullet$ A function $\mathbf{1}:Y(\ell)\longrightarrow Y(\ell+1)$ for each $0\leqslant \ell < m$, and $\mathbf{1}(A):=\mathbf{1}_{A}$ is called identity on $A$;

satisfying the following axioms:

$(a)\mathbf{(sources\ and\ targets\ of\ composites)}$ If $0\leqslant p < \ell \leqslant m$ and $A',A \in Y(\ell)\times_{Y(p)} Y(\ell)$, then
\begin{equation}
\begin{array}{ccc}
  for\,\, p= \ell-1 & \mathfrak{s}( A'\circ_{p}A)=\mathfrak{s}(A)               & \mathfrak{t}( A'\circ_{p}A)=\mathfrak{t}(A') \\
  for\,\, p< \ell-1 & \mathfrak{s}( A'\circ_{p}A)=\mathfrak{s}(A')\circ_{p}\mathfrak{s}(A) & \mathfrak{t}( A'\circ_{p}A)=\mathfrak{t}(A')\circ_{p}\mathfrak{t}(A).\nonumber
\end{array}
\end{equation}

$(b)\mathbf{(sources\ and\ targets\ of\ identities)}$ For $0\leqslant \ell < m$ and $A\in Y(\ell)$, we have
$$\mathfrak{s}(\mathbf{1}_{A})= A =\mathfrak{t}(\mathbf{1}_{A}).$$

$(c)\mathbf{(associativity)}$ If $0\leqslant p < \ell \leqslant m$, $A,B,C\in Y(\ell)$ and $(C,B),(B,A)\in Y(\ell)\times_{Y(p)} Y(\ell)$, then
$$(C\circ_{p}B)\circ_{p}A=C\circ_{p}(B\circ_{p}A).$$

$(d)\mathbf{(identities)}$ For $0\leqslant p < \ell \leqslant m$ and $A\in Y(\ell)$, we have
 $$\mathbf{1}^{\ell-p}(\mathfrak{t}^{\ell-p}(A))\circ_{p}A= A =A\circ_{p}\mathbf{1}^{\ell-p}(\mathfrak{s}^{\ell-p}(A)).$$

$(e)\mathbf{(binary\ interchange)}$ For $0\leqslant q< p < \ell \leqslant m$, $A,B,C,D\in Y(\ell)$ and $(D,C),(B,A)\in Y(\ell)\times_{Y(p)} Y(\ell)$,$(D,B),(C,A)\in Y(\ell)\times_{Y(q)} Y(\ell)$, we have
$$(D\circ_{p}C)\circ_{q}(B\circ_{p}A)=(D\circ_{q}B)\circ_{p}(C\circ_{q}A).$$

$(f)\mathbf{(nullary\ interchange)}$ For $0\leqslant p < \ell \leqslant m$, and $(B,A)\in Y(\ell)\times_{Y(p)} Y(\ell)$, we have
$$\mathbf{1}_{B}\circ_{p}\mathbf{1}_{A}=\mathbf{1}_{B\circ_{p}A}.$$
%\begin{figure}
%\centering
%\def\svgscale{0.33}
%
%\end{figure}
\begin{figure}
\centering
\def\svgscale{0.9}
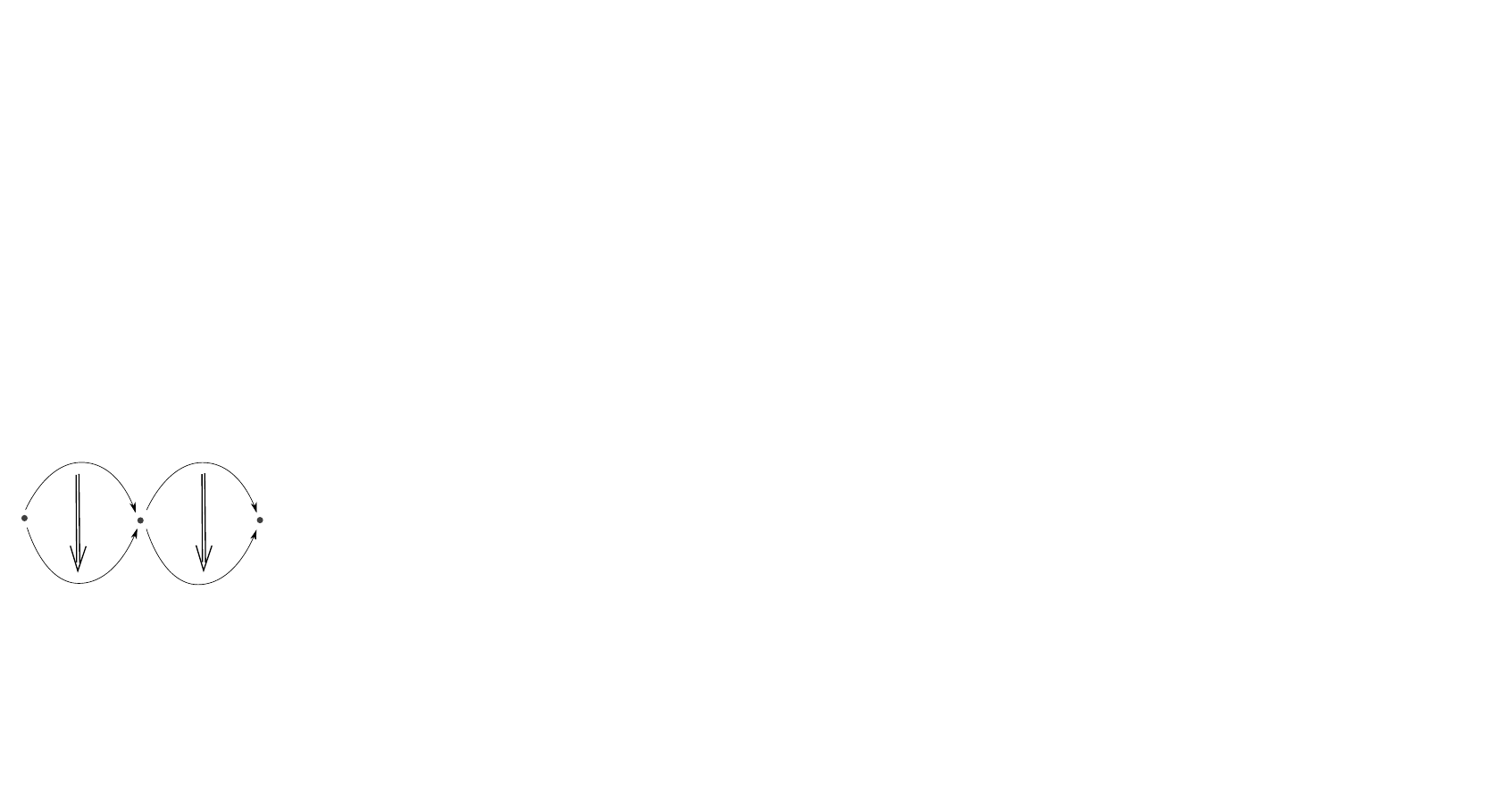
\caption{Axioms for an $m$-category}
\end{figure}
%\begin{figure}[htbp]
%\centering
%\begin{minipage}[t]{0.34\textwidth}
%\centering
%\def\svgscale{0.33}
%\input{a.pdf_tex}
%\end{minipage}
%%.............
%\begin{minipage}[t]{0.30\textwidth}
%\centering
%\def\svgscale{0.27}
%
%\end{minipage}
%%.............
%\begin{minipage}[t]{0.32\textwidth}
%\centering
%\def\svgscale{0.27}
%
%\end{minipage}
%\end{figure}
\end{definition}

\begin{remark}
Sometimes to emphasize the $p$-cell $A_{[p]}$ along which we glue two cells, we write $A'_{[\ell]2}\circ_{A_{[p]}}A_{[\ell]1}$ instead of $A'_{[\ell]2}\circ_{p}A_{[\ell]1}$.
\end{remark}

\begin{definition}
A strict $m$-functor between two strict $m$-categories $\mathcal{A}$ and $\mathcal{B}$ is a map $f:\mathcal{A}\longrightarrow \mathcal{B}$ of the underlying $m$-globular sets compatible with composites and identities.
\end{definition}
%Therefore we obtain a category $\mathbf{Str}$-$m$-$\mathbf{Cat}$ of strict $m$-categories.

%\begin{remark}
%There is another way to define the strict $m$-category.
%Let$(\mathbf{Str}$-$m$-$\mathbf{Cat})_{m\in\mathbb{N}}$ be the sequence of categories given inductively by $\mathbf{Str}$-$0$-$\mathbf{Cat}=\mathbf{Set}$, $\mathbf{Str}$-$(m+1)$-$\mathbf{Cat}=(\mathbf{Str}$-$m$-$\mathbf{Cat})$-$\mathbf{Cat}$.
%A strict $m$-category is an object of $\mathbf{Str}$-($m\textcolor{red}{+1})$-$\mathbf{Cat}$, and a strict $\textcolor{red}{n}$-functor is a map in $\mathbf{Str}$-$m$-$\mathbf{Cat}$. In fact, the two definitions are equivalent (Leinster, \cite{leinster}).%P41,45
%\end{remark}

\begin{definition}
%An \textbf{almost strict $m$-category} satisfies the requirements of a strict $m$-category up to canonical isomorphism. \\%应该不需要
Given a collection of sets $\{Y(\ell)\}_{\ell=0}^{+\infty}$, if for any
$m\in \mathbb{N}$, $\{Y(\ell)\}_{\ell=0}^{m}$ forms an $m$-globular set $Y$, we call the collection a \textbf{$\infty$-globular set}. Similarly, we can define a \textbf{strict $\infty$-category}.

For an $\infty$-globular set, for $\ell\in \Z_{\geqslant 0}$, two $\ell$-cells $A,B\in Y(\ell)$ is said to be \textbf{homotopic} if there exists a $(\ell+1)$-cell $C$ such that $\mathfrak{s}(C)=A$ and $\mathfrak{t}(C)=B$.
{A \textbf{weak $\infty$-category} is an $\infty$-globular set together with functions $\circ_{p}$ and $\mathbf{1}$ as in the Definition \ref{strict m cate def} such that (c),(d),(e) holds up to homotopy in \ref{strict m cate def}.}
%satisfies the requirements of a strict $\infty$-category up to homotopy , i.e. the equal sign in (c),(d),(e) holds up to homotopy.(?)
\end{definition}

%\begin{remark} \ \\
%\indent \textcolor{red}{The above definition is consistent with the definition in Leinster's book \cite{leinster}.}%P32,283
%\end{remark}

\begin{subsection}
  {Morse weak $\infty$-category}
  \label{Morse weak n pre-category}
\end{subsection}

We build up the Morse weak $n$-category in three steps and claim that we obtain a weak $\infty$-category by extending the $n=dim M$ to an $\infty$-structure.
We firstly construct an $\infty$-globular set, then define the composite and identity maps, and finally the six properties are verified.

{For simplicity, in this section about the algebraic structures behind our previous geometric setting, we use the following convention about the reparametrization of $t\in [0,1]=:I$. This is because $t\in \R$ and the finiteness of $H_{[\ell]}$ and $G_{[\ell]}$, $\R$ can be extended to $\overline{\R}$ which is diffeomorphic to $I$.
%, i.e., there is a diffeomorphism (reparametrization) $\chi:\overline{\R}\rightarrow [0,1]=:I$.}
Note that we view $t\in I:=[0,1]$ as the time variable, and $s^{\underline{\ell}},s^{\underline{\ell-1}},\cdots,s^{\underline{1}}$ as parameters.

\textbf{Convention:} Let
$H^{\alpha \beta}_{[\ell]}: [0,1]^{\ell}\times I\times M \longrightarrow \mathbb{R},\,\,(s^{\underline{\ell}},s^{\underline{\ell-1}},\cdots,s^{\underline{1}},t,x)\mapsto H^{\alpha \beta}_{[\ell]}(s^{\underline{\ell}},s^{\underline{\ell-1}},\cdots,s^{\underline{1}},t,x)$  be smooth. There exists $\varepsilon>0$ such that
$$H^{\alpha \beta}_{[\ell]}(s^{\underline{\ell}},s^{\underline{\ell-1}},\cdots,s^{\underline{1}},t,\cdot) = \left\{
 \begin{split}
  &f^{\alpha}&\indent&\text{if t}\in [0, \varepsilon)\\
  &f^{\beta} &\indent&\text{if t}\in (1-\varepsilon,1].
 \end{split}\right .$$
The metric $G_{[\ell]}$ is similar,
$$G_{[\ell]}(s^{\underline{\ell}},s^{\underline{\ell-1}},\cdots,s^{\underline{1}},t) = \left\{
 \begin{split}
  &g^{\alpha}&\indent&\text{if t}\in [0, \varepsilon)\\
  &g^{\beta} &\indent&\text{if t}\in (1-\varepsilon,1].
 \end{split}\right .$$
 }

To obtain a proper $\infty$-globular set, we need to define a collection of sets $Y$, source maps $\mathfrak{s}$ and target maps $\mathfrak{t}$.

We first fix two Morse-Smale pairs: $(f^{\alpha},g^{\alpha})$ and $(f^{\beta},g^{\beta})$ based on which the $\infty$-globular set is constructed. Recall that $\mathcal{G}:=\mathcal{G}_{g_{0}}$ is the space of all Riemannian metrics on $M$ obtained through a fixed Riemannian metric $g_{0}$.
To get the corresponding algebraic structures from the desired moduli spaces, we need further requirements on generic $G_{[\ell]}$.

\textbf{Requirements} on $G_{[\ell]}$ $(\star)$\label{requirementofG}:

(1) {For fixed $H_{[\ell]}$ and critical points $p\in Crf^{\alpha}$ and $q\in Crf^{\beta}$ such that $indp-indq=1-\ell$, we require $G_{[\ell]}$ to be generic with respect to $(H_{[\ell]},p,q)$;

(2) Let critical points $c\in Crf^{\alpha}$ and   $c'\in Crf^{\beta}$ such that $indp-indc'=indc-indq=-\ell$ with
$$\widehat{S}(p,c'):=\Big\{\widehat{\mathbf{s}}\in [0,1]^{\ell}\Big| G_{[\ell]}(\widehat{\mathbf{s}},\cdot)\in \Sigma^{c}(p,c',H_{[\ell]}(\widehat{\mathbf{s}},\cdot,\cdot))\Big\}$$
depending on $(H_{[\ell]},p,c')$. Here $\Sigma^{c}$ is the non-generic set defined {in Regularity and smoothness in \S 3}. %\ref{Regularity and smoothness}.
The set $\widehat{S}(c,q)$ is defined similarly. We require $G_{[\ell]}$ to assure that $\widehat{S}(p,c')$ ($\widehat{S}(c,q)$ respectively) is of codimension one, and intersects the boundary of $[0,1]^\ell$ transversally. {Furthermore we require such codimension one set to be the union of finitely many codimension one smooth manifolds with transversal intersections.}

\begin{remark}
(1) %$\widehat{S}(p,c')$ or $\widehat{S}(c,q)$  may not be a codimension one smooth manifold, but  is at least the union of finitely many codimension one smooth manifolds and these manifolds intersect transversely. %have sm and then talk transversely
{Taking $\widehat{S}(p,c')$ as an example, we can denote it as $\widehat{S}(p,c')=\bigcup\limits_{i}\limits^{k}\widehat{S}_{i}(p,c')$ ($k\in \Z_{>0}$) with each $\widehat{S}_{i}(p,c')$ a codimension one smooth manifold.}

(2) When we fix $p,q$ and $H_{[\ell]}$, we have codimension one sets $\widehat{S}(p,c')$ and $\widehat{S}(c,q)$ for all $ c'\in Cr f^{\beta},\  c\in Cr f^{\alpha}$ such that $indp-indc'=indc-indq=indp-indq-1=-\ell$.

%(3) {The requirement is reasonable since we can always make the number of components finite by perturbations.}
(3) {The requirement that the number of components is finite is reasonable since the parameter space $[0,1]^{\ell}$ is compact.}
\end{remark}

Now we are ready to construct the Morse weak $\infty$-category.

$\mathbf{Step\,1:\, source\, and\, target}$

We define the following sets

$Y(0)=\{(H,G):I \longrightarrow C^{\infty}(M)\times \mathcal{G}_{g_{0}},\ t\mapsto (H,G)(t)$ s.t. the following conditions $\textcircled{1}-\textcircled{3}$ hold$\}$ with
$\textcircled{1}$ $(H,G)$ depend on $t$ smoothly;
$\textcircled{2}$ There exists $\varepsilon>0$ such that $t\in [0,\varepsilon)$, $H(t)= f^{\alpha},\,G(t)= g^{\alpha}$; $t\in (1-\varepsilon,1]$, $H(t)=f^{\beta},\,G(t)=g^{\beta}$; and
$\textcircled{3}$ $H$ is regular w.r.t. $G$.

$Y(1)=\{(H_{[1]},G_{[1]}):[0,1]\times I \longrightarrow C^{\infty}(M)\times \mathcal{G}_{g_{0}},\  (s^{\underline{1}},t)\mapsto (H_{[1]},G_{[1]})(s^{\underline{1}},t) $  s.t. the following conditions  $\textcircled{1}-\textcircled{4}$  hold$\}$ with
$\textcircled{1}$ $(H_{[1]},G_{[1]})$ depend on $s^{\underline{1}}$ and $t$ smoothly;
$\textcircled{2}$ $(H_{[1]},G_{[1]})(0,\cdot), (H_{[1]},G_{[1]})(1,\cdot)\in Y(0)$;
$\textcircled{3}$ $G_{[1]}$ is generic and satisfies $(\star)$; and
$\textcircled{4}$ $H_{[1]}$ is regular w.r.t. $G_{[1]}$.

In general for $\ell\geqslant 2$, $Y(\ell)=\{(H_{[\ell]},G_{[\ell]}):\underbrace{[0,1]\times\cdots\times[0,1]}\limits_{\ell}\times I\longrightarrow C^{\infty}(M)\times \mathcal{G}_{g_{0}}, (s^{\underline{\ell}},\cdots,s^{\underline{1}},t)\mapsto (H_{[\ell]},G_{[\ell]})(s^{\underline{\ell}},\cdots,s^{\underline{1}},t)$ s.t. the following conditions $\textcircled{1}-\textcircled{4}$ hold$\}$ with
$\textcircled{1}$ $(H_{[\ell]},G_{[\ell]})$ depend on $s^{\underline{\ell}},\cdots,s^{\underline{1}}$ and $t$ smoothly;
$\textcircled{2}$ $(H_{[\ell]},G_{[\ell]})(\cdots,\underbrace{0}\limits_{i-th},\cdots)=(H_{[\ell]},G_{[\ell]})(\underbrace{0,\cdots,0}\limits_{\ell+2-i},\underbrace{\cdot,\cdots,\cdot}_{i-1}),\ (H_{[\ell]},G_{[\ell]})(\cdots,\underbrace{1}\limits_{i-th},\cdots)=(H_{[\ell]},G_{[\ell]})(\underbrace{0,\cdots,0,1}\limits_{\ell+2-i},\underbrace{\cdot,\cdots,\cdot}_{i-1})\in Y(i-1), i=1,2,\cdots,l$;
$\textcircled{3}$ $G_{[\ell]}$ is generic and satisfies $(\star)$; and
$\textcircled{4}$ $H_{[\ell]}$ is regular w.r.t. $G_{[\ell]}$.

The condition $\textcircled{2}$ in each $Y(i),i=2,3,\cdots$ tells us the degenerations and implies that $\textcircled{2}'$: $(H_{[\ell]},G_{[\ell]})(0,0,\cdots)=(H_{[\ell]},G_{[\ell]})(\cdot\ ,0,\cdots)=(H_{[\ell]},G_{[\ell]})(1,0,\cdots)$; $(H_{[\ell]},G_{[\ell]})(0,1,\cdots)=(H_{[\ell]},G_{[\ell]})( \cdot\ ,1,\cdots)=(H_{[\ell]},G_{[\ell]})(1,1,\cdots)$.
Such atavistic property make the definitions of the source and target maps possible. The conditions $\textcircled{3}$ and $\textcircled{4}$ ensure that we are able to obtain the desired moduli spaces.
%atavistic 隔代遗传的

Define the source and target maps:
$$\mathfrak{s}:Y(\ell)\longrightarrow Y(\ell-1);\  (H_{[\ell]},G_{[\ell]})(\cdots)\mapsto (H_{[\ell]},G_{[\ell]})(0,\cdots);$$
$$\mathfrak{t}:Y(\ell)\longrightarrow Y(\ell-1);\  (H_{[\ell]},G_{[\ell]})(\cdots)\mapsto (H_{[\ell]},G_{[\ell]})(1,\cdots);$$

\begin{claim}
$\mathfrak{s}\circ \mathfrak{s}= \mathfrak{s}\circ \mathfrak{t}$; $\mathfrak{t}\circ \mathfrak{s}= \mathfrak{t}\circ \mathfrak{t}$
\end{claim}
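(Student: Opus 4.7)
The plan is to verify these globularity identities by directly unfolding the definitions of $\mathfrak{s}$ and $\mathfrak{t}$ and appealing to the degeneration property $\textcircled{2}'$ (derived from $\textcircled{2}$) that was highlighted immediately before the claim. The key observation is that both $\mathfrak{s}\circ\mathfrak{s}$ and $\mathfrak{s}\circ\mathfrak{t}$ applied to a cell in $Y(\ell)$ land in $Y(\ell-2)$ as restrictions of $(H_{[\ell]},G_{[\ell]})$ to a codimension-two face of the parameter cube, and the atavistic condition on the cells of $Y(\ell)$ forces these two faces to give the same element.

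First, I would fix $(H_{[\ell]},G_{[\ell]})\in Y(\ell)$ with $\ell\geqslant 2$ (the statement is vacuous otherwise since $\mathfrak{s},\mathfrak{t}$ only leave $Y(\ell)$ for $\ell\geqslant 1$). By definition, $\mathfrak{s}(H_{[\ell]},G_{[\ell]})=(H_{[\ell]},G_{[\ell]})(0,\cdot,\cdots,\cdot)\in Y(\ell-1)$, viewed as a map in the remaining parameters $(s^{\underline{\ell-1}},\cdots,s^{\underline{1}},t)$. Applying $\mathfrak{s}$ once more evaluates the leading parameter (now $s^{\underline{\ell-1}}$) at $0$, producing $(H_{[\ell]},G_{[\ell]})(0,0,\cdot,\cdots,\cdot)$. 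Symmetrically, $\mathfrak{s}\circ\mathfrak{t}(H_{[\ell]},G_{[\ell]})=(H_{[\ell]},G_{[\ell]})(1,0,\cdot,\cdots,\cdot)$.

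Now I would invoke property $\textcircled{2}'$, which states that $(H_{[\ell]},G_{[\ell]})(0,0,\cdots)=(H_{[\ell]},G_{[\ell]})(\cdot,0,\cdots)=(H_{[\ell]},G_{[\ell]})(1,0,\cdots)$; in particular, the left-hand and right-hand values at second coordinate $0$ coincide. This immediately gives $\mathfrak{s}\circ\mathfrak{s}=\mathfrak{s}\circ\mathfrak{t}$. The other identity $\mathfrak{t}\circ\mathfrak{s}=\mathfrak{t}\circ\mathfrak{t}$ is obtained by the same argument with the second coordinate set to $1$ instead of $0$, using the companion statement $(H_{[\ell]},G_{[\ell]})(0,1,\cdots)=(H_{[\ell]},G_{[\ell]})(\cdot,1,\cdots)=(H_{[\ell]},G_{[\ell]})(1,1,\cdots)$ in $\textcircled{2}'$.

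There is no real obstacle here; the only thing that needs care is bookkeeping of the parameter ordering, since $\mathfrak{s}$ and $\mathfrak{t}$ always act on the outermost remaining parameter. Once one checks that the two successive evaluations land in the same corner of $[0,1]^{\ell}$ after accounting for $\textbf{which}$ coordinate each application consumes, the identities follow verbatim from the degeneration relations built into the definition of $Y(\ell)$. In essence, the claim is a tautological reflection of the fact that the higher homotopies $(H_{[\ell]},G_{[\ell]})$ were designed, via condition $\textcircled{2}$, so that their restrictions to the $\ell$-cube are globular in the sense of Definition~4.1.
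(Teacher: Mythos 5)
Your proof is correct and follows essentially the same route as the paper's: unfold the definitions of $\mathfrak{s}$ and $\mathfrak{t}$ to identify $\mathfrak{s}\circ\mathfrak{s}$ and $\mathfrak{s}\circ\mathfrak{t}$ (resp.\ $\mathfrak{t}\circ\mathfrak{s}$ and $\mathfrak{t}\circ\mathfrak{t}$) with evaluations of $(H_{[\ell]},G_{[\ell]})$ at the corners $(0,0,\cdots)$ and $(1,0,\cdots)$ (resp.\ $(0,1,\cdots)$ and $(1,1,\cdots)$), then invoke the degeneration property $\textcircled{2}'$ to equate them. Your explicit bookkeeping of which remaining coordinate each application consumes, and the remark that the claim is vacuous for $\ell<2$, are accurate clarifications of the same argument.
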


\begin{proof}
Take any element $(H_{[\ell]},G_{[\ell]})$ in $Y(\ell)$, by condition $\textcircled{2}'$ for $Y(\ell)$,\\
$1.$ $\mathfrak{s}\circ \mathfrak{s}(H_{[\ell]},G_{[\ell]}) =(H_{[\ell]},G_{[\ell]})(0,0,\cdots)=(H_{[\ell]},G_{[\ell]})(\cdot\ ,0,\cdots)=(H_{[\ell]},G_{[\ell]})(1,0,\cdots)=\mathfrak{s}\circ \mathfrak{t}(H_{[\ell]},G_{[\ell]})$. \\
$2.$ $\mathfrak{t}\circ \mathfrak{s}(H_{[\ell]},G_{[\ell]})=(H_{[\ell]},G_{[\ell]})(0,1,\cdots)=(H_{[\ell]},G_{[\ell]})( \cdot\ ,1,\cdots)=(H_{[\ell]},G_{[\ell]})(1,1,\cdots)= \mathfrak{t}\circ \mathfrak{t}(H_{[\ell]},G_{[\ell]})$.
\end{proof}

The above collection of sets together with $\mathfrak{s},\mathfrak{t}$ forms an $\infty$-globular set.

%.............................................
$\mathbf{Step\,2: composite\,\, and\,\, indentity}$

$1.\,\mathbf{(composite)}$
We define the composite $\circ _{p}:Y(\ell)\times Y(\ell)\longrightarrow Y(\ell)$ of two $\ell$-cells along a $p$-cell
for $0\leqslant p < \ell$ to be $$\circ _{p}((H_{[\ell]},G_{[\ell]})_{2},(H_{[\ell]},G_{[\ell]})_{1}):=(H_{[\ell]},G_{[\ell]})_{2}\circ_{(H_{[p]},G_{[p]})}(H_{[\ell]},G_{[\ell]})_{1}$$ with
 $$(H_{[\ell]},G_{[\ell]})_{2}\circ_{(H_{[p]},G_{[p]})}(H_{[\ell]},G_{[\ell]})_{1}:= \left\{
 \begin{split}
  &(H_{[\ell]},G_{[\ell]})_{1}(s^{\underline{\ell}},\cdots,2\cdot s^{\underline{p+1}},\cdots,s^{\underline{1}},t) &\indent& if\, s^{\underline{p+1}}\in [0, \frac{1}{2}]\\
  &(H_{[\ell]},G_{[\ell]})_{2}(s^{\underline{\ell}},\cdots,2\cdot s^{\underline{p+1}}-1,\cdots,s^{\underline{1}},t) &\indent& if\, s^{\underline{p+1}}\in [\frac{1}{2}, 1]
 \end{split}\right.$$
 such that $(H_{[\ell]},G_{[\ell]})_{1}$ and $(H_{[\ell]},G_{[\ell]})_{2}$ connect smoothly at $s^{\underline{p+1}}= \frac{1}{2}$.

$2. \,\mathbf{(identity)}$
{For $\ell\geqslant 0$, the identity $\mathbf{1}:Y(\ell)\longrightarrow Y(\ell+1)$ is a map such that
$$\mathbf{1}_{(H_{[\ell]},G_{[\ell]})}(s^{\underline{\ell+1}},s^{\underline{\ell}},\cdots,t)=\mathbf{1}((H_{[\ell]},G_{[\ell]})):=(H_{[\ell]},G_{[\ell]})(s^{\underline{\ell}},\cdots), \forall s^{\underline{\ell+1}}\in [0,1].$$}

%.............................................
$\mathbf{Step\,3:\, compatible\, conditions}$

We check that the six properties in the definition of the weak $\infty$-category are fulfilled.

%因为（\star）要求边界是generic的，粗略的说， 所以在边界上都没有影响， 对于结合律属于不同的 S都是并所以没有问题
$(a)\mathbf{(sources\ and\ targets\ of\ composites)}$

For $p=\ell-1$ and $((H_{[\ell]},G_{[\ell]})_{1},(H_{[\ell]},G_{[\ell]})_{2})\in Y(\ell)\times_{\ell-1} Y(\ell)$,
 $$(H_{[\ell]},G_{[\ell]})_{2}\circ_{(\ell-1)}(H_{[\ell]},G_{[\ell]})_{1}:= \left\{
 \begin{split}
  &(H_{[\ell]},G_{[\ell]})_{1}(2\cdot s^{\underline{\ell}},\cdots,s^{\underline{1}},t) &\indent& if\, s^{\underline{\ell}}\in [0, \frac{1}{2}]\\
  &(H_{[\ell]},G_{[\ell]})_{2}(2\cdot s^{\underline{\ell}}-1,\cdots,s^{\underline{1}},t) &\indent& if\, s^{\underline{\ell}}\in [\frac{1}{2}, 1].
 \end{split}\right.$$
So,
\begin{flalign*}
       & \mathfrak{s}((H_{[\ell]},G_{[\ell]})_{2}\circ_{(\ell-1)}(H_{[\ell]},G_{[\ell]})_{1}(s^{\underline{\ell}},\cdots,s^{\underline{1}},t)) %\\
        = (H_{[\ell]},G_{[\ell]})_{2}\circ_{(\ell-1)}(H_{[\ell]},G_{[\ell]})_{1}(0,s^{\underline{\ell-1}},\cdots,s^{\underline{1}},t)\\
       & = (H_{[\ell]},G_{[\ell]})_{1}(0,s^{\underline{\ell-1}},\cdots,s^{\underline{1}},t) %\\
        = \mathfrak{s}((H_{[\ell]},G_{[\ell]})_{1})(s^{\underline{\ell-1}},\cdots,s^{\underline{1}},t)
\end{flalign*}
The second equality holds due to the definition.

For $p< \ell-1$ and $((H_{[\ell]},G_{[\ell]})_{1},(H_{[\ell]},G_{[\ell]})_{2})\in Y(\ell)\times_{p} Y(\ell)$,
 $$\mathfrak{s}((H_{[\ell]},G_{[\ell]})_{2}\circ_{p}(H_{[\ell]},G_{[\ell]})_{1})\\= \left\{
 \begin{split}
  &(H_{[\ell]},G_{[\ell]})_{1}(0,s^{\underline{\ell-1}},\cdots,2\cdot s^{\underline{p+1}},\cdots,s^{\underline{1}},t) &\indent& if\, s^{\underline{p+1}}\in [0, \frac{1}{2}]\\
  &(H_{[\ell]},G_{[\ell]})_{2}(0,s^{\underline{\ell-1}},\cdots,2\cdot s^{\underline{p+1}}-1,\cdots,s^{\underline{1}},t) &\indent& if\, s^{\underline{p+1}}\in [\frac{1}{2}, 1]
 \end{split}\right.$$
which is exactly equals to $\mathfrak{s}((H_{[\ell]},G_{[\ell]})_{2})\circ_{p}\mathfrak{s}((H_{[\ell]},G_{[\ell]})_{1})$.

  Similarly, we can check the corresponding properties for $\mathfrak{t}$ in Definition \ref{strict m cate def}.

$(b)\mathbf{(sources\ and\ targets\ of\ identities)}$

By the definition of the identity $\mathbf{1}$, for any $(H_{[\ell]},G_{[\ell]})\in Y(\ell)$, $\mathbf{1}_{(H_{[\ell]},G_{[\ell]})}(s^{\underline{\ell+1}},\cdots,t)=(H_{[\ell]},G_{[\ell]})(s^{\underline{\ell}},\cdots,t)$, $\forall s^{\underline{\ell+1}}\in[0,1]$. Take $s^{\underline{\ell+1}}=0,1$ respectively, then $$\mathfrak{s}(\mathbf{1}_{(H_{[\ell]},G_{[\ell]})})= \mathbf{1}_{(H_{[\ell]},G_{[\ell]})}(0,\cdots )= (H_{[\ell]},G_{[\ell]}),\,  \mathfrak{t}(\mathbf{1}_{(H_{[\ell]},G_{[\ell]})})=\mathbf{1}_{(H_{[\ell]},G_{[\ell]})}(1,\cdots) =(H_{[\ell]},G_{[\ell]}).$$

$(c)\mathbf{(associativity\ (up\ to\ homotopy))}$

Let $(H_{[\ell]},G_{[\ell]})_{1},(H_{[\ell]},G_{[\ell]})_{2},(H_{[\ell]},G_{[\ell]})_{3} \in Y(\ell)$, $((H_{[\ell]},G_{[\ell]})_{3},(H_{[\ell]},G_{[\ell]})_{2}),((H_{[\ell]},G_{[\ell]})_{2},(H_{[\ell]},G_{[\ell]})_{1})\in  Y(\ell)\times_{p} Y(\ell)$. We need to check that $((H_{[\ell]},G_{[\ell]})_{3}\circ_{p}(H_{[\ell]},G_{[\ell]})_{2})\circ_{p}(H_{[\ell]},G_{[\ell]})_{1}$ equals to  $(H_{[\ell]},G_{[\ell]})_{3}\circ_{p}((H_{[\ell]},G_{[\ell]})_{2}\circ_{p}(H_{[\ell]},G_{[\ell]})_{1})$ up to homotopy. More explictely,
 $$((H_{[\ell]},G_{[\ell]})_{3}\circ_{p}(H_{[\ell]},G_{[\ell]})_{2})\circ_{p}(H_{[\ell]},G_{[\ell]})_{1}:= \left\{
 \begin{split}
  &(H_{[\ell]},G_{[\ell]})_{1}(s^{\underline{\ell}},\cdots,2\cdot s^{\underline{p+1}},\cdots,s^{\underline{1}},t) &\indent& \textrm{if}\, s^{\underline{p+1}}\in [0, \frac{1}{2}]\\
  &(H_{[\ell]},G_{[\ell]})_{2}(s^{\underline{\ell}},\cdots,4\cdot s^{\underline{p+1}}-2,\cdots,s^{\underline{1}},t) &\indent& \textrm{if}\, s^{\underline{p+1}}\in [\frac{1}{2}, \frac{3}{4}]\\
  &(H_{[\ell]},G_{[\ell]})_{3}(s^{\underline{\ell}},\cdots,4\cdot s^{\underline{p+1}}-3,\cdots,s^{\underline{1}},t) &\indent& \textrm{if}\, s^{\underline{p+1}}\in [\frac{3}{4}, 1]
 \end{split}\right.$$
and
  $$(H_{[\ell]},G_{[\ell]})_{3}\circ_{p}((H_{[\ell]},G_{[\ell]})_{2}\circ_{p}(H_{[\ell]},G_{[\ell]})_{1}):= \left\{
 \begin{split}
  &(H_{[\ell]},G_{[\ell]})_{1}(s^{\underline{\ell}},\cdots,4\cdot s^{\underline{p+1}},\cdots,s^{\underline{1}},t) &\indent& \textrm{if}\, s^{\underline{p+1}}\in [0, \frac{1}{4}]\\
  &(H_{[\ell]},G_{[\ell]})_{2}(s^{\underline{\ell}},\cdots,4\cdot s^{\underline{p+1}}-1,\cdots,s^{\underline{1}},t) &\indent& \textrm{if}\, s^{\underline{p+1}}\in [\frac{1}{4}, \frac{1}{2}]\\
  &(H_{[\ell]},G_{[\ell]})_{3}(s^{\underline{\ell}},\cdots,2\cdot s^{\underline{p+1}}-1,\cdots,s^{\underline{1}},t) &\indent& \textrm{if}\, s^{\underline{p+1}}\in [\frac{1}{2}, 1]
 \end{split}\right.$$
 Let $T:[0,1]\times ([0,1]\times\cdots \times[0,1]\times I)\longrightarrow C^{\infty}(M)\times \mathcal{G}_{g_{0}}$ be
 % \textcolor{blue}{$$T(r,s^{\underline{\ell}},\cdots,s^{\underline{1}},t):= (1-r)\cdot ((H_{[\ell]},G_{[\ell]})_{3}\circ_{p}(H_{[\ell]},G_{[\ell]})_{2})\circ_{p}(H_{[\ell]},G_{[\ell]})_{1}+r\cdot (H_{[\ell]},G_{[\ell]})_{3}\circ_{p}((H_{[\ell]},G_{[\ell]})_{2}\circ_{p}(H_{[\ell]},G_{[\ell]})_{1}),$$}
  %.....
  $$T(r,s^{\underline{\ell}},\cdots,s^{\underline{1}},t):= \left\{
 \begin{split}
  &(H_{[\ell]},G_{[\ell]})_{1}(s^{\underline{\ell}},\cdots,\frac{4}{r+1}\cdot s^{\underline{p+1}},\cdots,s^{\underline{1}},t) &\indent& \textrm{if}\, s^{\underline{p+1}}\in [0, \frac{r+1}{4}]\\
  &(H_{[\ell]},G_{[\ell]})_{2}(s^{\underline{\ell}},\cdots,4\cdot s^{\underline{p+1}}-r-1,\cdots,s^{\underline{1}},t) &\indent& \textrm{if}\, s^{\underline{p+1}}\in [\frac{r+1}{4}, \frac{r+2}{4}]\\
  &(H_{[\ell]},G_{[\ell]})_{3}(s^{\underline{\ell}},\cdots,\frac{4}{2-r}\cdot s^{\underline{p+1}}+\frac{-r-2}{2-r},\cdots,s^{\underline{1}},t) &\indent& \textrm{if}\, s^{\underline{p+1}}\in [\frac{r+2}{4}, 1]
 \end{split}\right.$$
   with $r$ the homotopy parameter. Then $T(0,\cdots)=((H_{[\ell]},G_{[\ell]})_{3}\circ_{p}(H_{[\ell]},G_{[\ell]})_{2})\circ_{p}(H_{[\ell]},G_{[\ell]})_{1}$ and $T(1,\cdots)=(H_{[\ell]},G_{[\ell]})_{3}\circ_{p}((H_{[\ell]},G_{[\ell]})_{2}\circ_{p}(H_{[\ell]},G_{[\ell]})_{1})$. Therefore, the equality holds up to homotopy.
%要用重新参数化同伦办法保证同伦（高一维cell）也满足codim1。

$(d)\mathbf{(identities(up\ to\ homotopy))}$

For the left hand side ($LHS$),
 \begin{align*}
   &\mathbf{1}^{\ell-p}(\mathfrak{t}^{\ell-p}((H_{[\ell]},G_{[\ell]})(s^{\underline{\ell}},\cdots,s^{\underline{1}},t)))
   = \mathbf{1}^{\ell-p}((H_{[\ell]},G_{[\ell]})(\underbrace{1,\cdots,1}\limits_{\ell-p},\underbrace{s^{\underline{p}},\cdots,s^{\underline{1}}}\limits_{p},t)) \\
 & =
       (\mathbf{1}^{\ell-p}(H_{[\ell]},G_{[\ell]}))(w^{\underline{\ell-p}},\cdots,w^{\underline{1}},1,\cdots,1,s^{\underline{p}},\cdots,s^{\underline{1}},t).
       %=\mathbf{1}^{\ell-p}_{(H_{[\ell]},G_{[\ell]})}(w^{\underline{l-p}},\cdots,w^{\underline{1}},1,\cdots,1,s^{\underline{p}},\cdots,s^{\underline{1}},t)
       %\mathbf{1}^{\ell-p}_{(H_{[\ell]},G_{[\ell]})}这个没有意义
\end{align*}
 We rename the parameters $w^{\underline{\ell-p}},\cdots,w^{\underline{1}}$ by $s^{\underline{\ell}},\cdots,s^{\underline{p+1}}$,\\
 $$LHS=(\mathbf{1}^{\ell-p}(H_{[\ell]},G_{[\ell]}))(s^{\underline{\ell}},\cdots,s^{\underline{p+1}},1,\cdots,1,s^{\underline{p}},\cdots,s^{\underline{1}},t)\circ_{p} (H_{[\ell]},G_{[\ell]})(s^{\underline{\ell}},\cdots,s^{\underline{1}},t)$$
   $$= \left\{
 \begin{split}
  &(H_{[\ell]},G_{[\ell]})(s^{\underline{\ell}},\cdots,2\cdot s^{\underline{p+1}},\cdots,s^{\underline{1}},t)   &\indent& \textrm{if}\, s^{\underline{p+1}}\in [0, \frac{1}{2}]\\
  %(\mathbf{1}^{\ell-p}(H_{[\ell]},G_{[\ell]}))(s^{\underline{\ell}},\cdots,2\cdot s^{\underline{p+1}},\cdots,1,\cdots ,1,s^{\underline{p}},s^{\underline{1}},t) &\indent& if\, s^{\underline{p+1}}\in [0, \frac{1}{2}]\\
  &(\mathbf{1}^{\ell-p}(H_{[\ell]},G_{[\ell]}))(s^{\underline{\ell}},\cdots,2\cdot s^{\underline{p+1}}-1,\cdots,1,\cdots ,1,s^{\underline{p}},\cdots,s^{\underline{1}},t)   &\indent& \textrm{if}\, s^{\underline{p+1}}\in [\frac{1}{2}, 1].
  %(H_{[\ell]},G_{[\ell]})(s^{\underline{\ell}},\cdots,2\cdot s^{\underline{p+1}}-1,\cdots,s^{\underline{1}},t) &\indent& if\, s^{\underline{p+1}}\in [\frac{1}{2}, 1]
 \end{split}\right.$$
  Similarly, by the definitions of $\mathbf{1}$ and $\mathfrak{s}$,
  $$ RHS = \left\{
 \begin{split}
  &(\mathbf{1}^{\ell-p}(H_{[\ell]},G_{[\ell]}))(s^{\underline{\ell}},\cdots,2\cdot s^{\underline{p+1}},0,\cdots ,0,s^{\underline{p}},\cdots,s^{\underline{1}},t)   &\indent& \textrm{if}\, s^{\underline{p+1}}\in [0,\frac{1}{2}]\\
  %(H_{[\ell]},G_{[\ell]})(s^{\underline{\ell}},\cdots,2\cdot s^{\underline{p+1}},\cdots,s^{\underline{1}},t) &\indent& if\, s^{\underline{p+1}}\in [0,\frac{1}{2}]\\
  &(H_{[\ell]},G_{[\ell]})(s^{\underline{\ell}},\cdots,2\cdot s^{\underline{p+1}}-1,\cdots,s^{\underline{1}},t)   &\indent& \textrm{if}\, s^{\underline{p+1}}\in [\frac{1}{2},1].
  %(\mathbf{1}^{\ell-p}(H_{[\ell]},G_{[\ell]}))(s^{\underline{\ell}},\cdots,2\cdot s^{\underline{p+1}}-1,0,\cdots ,0,s^{\underline{p}},s^{\underline{1}},t) &\indent& if\, s^{\underline{p+1}}\in [\frac{1}{2},1]
 \end{split}\right.$$
  Let $T_{1}:[0,1]\times ([0,1]\times\cdots \times[0,1]\times I)\longrightarrow C^{\infty}(M)\times \mathcal{G}_{g_{0}}$;
% \textcolor{blue}{$T(r,s^{\underline{\ell}},\cdots,s^{\underline{1}},t):= (1-r)\cdot LHS+r\cdot RHS$},
  $$ T_{1}(r,s^{\underline{\ell}},\cdots,s^{\underline{1}},t): = \left\{
 \begin{split}
  &LHS(s^{\underline{\ell}},\cdots,\frac{2}{r+1}\cdot s^{\underline{p+1}},\cdots,s^{\underline{1}},t)   &\indent& \textrm{if}\, s^{\underline{p+1}}\in [0,\frac{1+r}{2}]\\
  &(H_{[\ell]},G_{[\ell]})(s^{\underline{\ell}},\cdots,\frac{2}{1-r}\cdot s^{\underline{p+1}}+\frac{-r-1}{1-r},\cdots,s^{\underline{1}},t)   &\indent& \textrm{if}\, s^{\underline{p+1}}\in [\frac{1+r}{2},1].
 \end{split}\right.$$
   with $r$ the homotopy parameter. {Then $T_{1}(0,\cdots)=LHS$ and $T_{1}(1,\cdots)=(H_{[\ell]},G_{[\ell]})$. Similarly, we can construct a homotopy $T_{2}$ from $(H_{[\ell]},G_{[\ell]})$ to $RHS$. Moreover, there is a homotopy $T$ from $LHS$ to $RHS$.} Therefore the equality holds up to homotopy.
%用形变收缩核同伦保证同伦这个cell也满足codim1 condition.

$(e)\mathbf{(binary\ interchange(up\ to\ homotopy))}$

$LHS= ((H_{[\ell]},G_{[\ell]})_{4}\circ_{p} (H_{[\ell]},G_{[\ell]})_{3}) \circ_{q}((H_{[\ell]},G_{[\ell]})_{2}\circ_{p} (H_{[\ell]},G_{[\ell]})_{1})$ and
$RHS= ((H_{[\ell]},G_{[\ell]})_{4}\circ_{q} (H_{[\ell]},G_{[\ell]})_{2}) \circ_{p}((H_{[\ell]},G_{[\ell]})_{3}\circ_{q} (H_{[\ell]},G_{[\ell]})_{1 })$.
Without loss of generality, let's suppose $\underline{p} > \underline{q}$,
  $$ LHS = \left\{
 \begin{split}
  &(H_{[\ell]},G_{[\ell]})_{1}(s^{\underline{\ell}},\cdots,2\cdot s^{\underline{p+1}},\cdots,2\cdot s^{\underline{q+1}},\cdots,s^{\underline{1}},t) &\indent& \textrm{if}\, s^{\underline{p+1}}\in [0,\frac{1}{2}] &\indent&  s^{\underline{q+1}}\in [0,\frac{1}{2}]\\
  &(H_{[\ell]},G_{[\ell]})_{2}(s^{\underline{\ell}},\cdots,2\cdot s^{\underline{p+1}}-1,\cdots,2\cdot s^{\underline{q+1}},\cdots,s^{\underline{1}},t) &\indent&\textrm{if}\, s^{\underline{p+1}}\in [\frac{1}{2},1] &\indent& s^{\underline{q+1}}\in [0,\frac{1}{2}]\\
  &(H_{[\ell]},G_{[\ell]})_{3}(s^{\underline{\ell}},\cdots,2\cdot s^{\underline{p+1}},\cdots,2\cdot s^{\underline{q+1}}-1,\cdots,s^{\underline{1}},t) &\indent& \textrm{if}\, s^{\underline{p+1}}\in [0,\frac{1}{2}] &\indent&  s^{\underline{q+1}}\in [\frac{1}{2},1]\\
  &(H_{[\ell]},G_{[\ell]})_{4}(s^{\underline{\ell}},\cdots,2\cdot s^{\underline{p+1}}-1,\cdots,2\cdot s^{\underline{q+1}}-1,\cdots,s^{\underline{1}},t) &\indent& \textrm{if}\, s^{\underline{p+1}}\in [\frac{1}{2},1] &\indent&  s^{\underline{q+1}}\in [\frac{1}{2},1]
 \end{split}\right.$$
and
  $$ RHS = \left\{
 \begin{split}
  &(H_{[\ell]},G_{[\ell]})_{1}(s^{\underline{\ell}},\cdots,2\cdot s^{\underline{p+1}},\cdots,2\cdot s^{\underline{q+1}},\cdots,s^{\underline{1}},t) &\indent& \textrm{if}\, s^{\underline{q+1}}\in [0,\frac{1}{2}] &\indent&  s^{\underline{p+1}}\in [0,\frac{1}{2}]\\
  &(H_{[\ell]},G_{[\ell]})_{3}(s^{\underline{\ell}},\cdots,2\cdot s^{\underline{p+1}},\cdots,2\cdot s^{\underline{q+1}}-1,\cdots,s^{\underline{1}},t) &\indent& \textrm{if}\, s^{\underline{q+1}}\in [\frac{1}{2},1] &\indent& s^{\underline{p+1}}\in [0,\frac{1}{2}]\\
  &(H_{[\ell]},G_{[\ell]})_{2}(s^{\underline{\ell}},\cdots,2\cdot s^{\underline{p+1}}-1,\cdots,2\cdot s^{\underline{q+1}},\cdots,s^{\underline{1}},t) &\indent& \textrm{if}\, s^{\underline{q+1}}\in [0,\frac{1}{2}] &\indent&  s^{\underline{p+1}}\in [\frac{1}{2},1]\\
  &(H_{[\ell]},G_{[\ell]})_{4}(s^{\underline{\ell}},\cdots,2\cdot s^{\underline{p+1}}-1,\cdots,2\cdot s^{\underline{q+1}}-1,\cdots,s^{\underline{1}},t) &\indent& \textrm{if}\, s^{\underline{q+1}}\in [\frac{1}{2},1] &\indent&  s^{\underline{p+1}}\in [\frac{1}{2},1]
 \end{split}\right.$$
  %Let $T:[0,1]\times ([0,1]\times\cdots \times[0,1]\times I)\longrightarrow C^{\infty}(M)\times \mathcal{G}_{g_{0}}$;
%  $T(r,s^{\underline{\ell}},\cdots,s^{\underline{1}},t):= (1-r)\cdot LHS+r\cdot RHS$,
%   with $r$ the homotopy parameter. Then $T(0,\cdots)=LHS$ and $T(1,\cdots)=RHS$. Therefore, the equality holds up to homotopy.
{Now one can see that $LHS=RHS$.}

$(f)\mathbf{(nullary\ interchange)}$
  $$LHS=\mathbf{1}_{(H_{[\ell]},G_{[\ell]})_{2}}\circ \mathbf{1}_{(H_{[\ell]},G_{[\ell]})_{1}}:= \left\{
 \begin{split}
  &\mathbf{1}_{(H_{[\ell]},G_{[\ell]})_{1}}(s^{\underline{\ell+1}},s^{\underline{\ell}},\cdots,2\cdot s^{\underline{p+1}},\cdots,s^{\underline{1}},t) &\indent& \textrm{if}\, s^{\underline{p+1}}\in [0, \frac{1}{2}]\\
  &\mathbf{1}_{(H_{[\ell]},G_{[\ell]})_{2}}(s^{\underline{\ell+1}},s^{\underline{\ell}},\cdots,2\cdot s^{\underline{p+1}}-1,\cdots,s^{\underline{1}},t) &\indent& \textrm{if}\, s^{\underline{p+1}}\in [\frac{1}{2}, 1]
 \end{split}\right.$$
and
  $$RHS=\mathbf{1}_{(H_{[\ell]},G_{[\ell]})_{2}\circ (H_{[\ell]},G_{[\ell]})_{1}}
  = (H_{[\ell]},G_{[\ell]})_{2}\circ (H_{[\ell]},G_{[\ell]})_{1}(s^{\underline{\ell}},\cdots,s^{\underline{1}},t), \forall s^{\underline{\ell+1}}$$
  $$= \left\{
 \begin{split}
  &(H_{[\ell]},G_{[\ell]})_{1}(s^{\underline{\ell}},\cdots,2\cdot s^{\underline{p+1}},\cdots,s^{\underline{1}},t) &\indent& \textrm{if}\, s^{\underline{p+1}}\in [0, \frac{1}{2}] &\forall s^{\underline{\ell+1}}\\
  &(H_{[\ell]},G_{[\ell]})_{2}(s^{\underline{\ell}},\cdots,2\cdot s^{\underline{p+1}}-1,\cdots,s^{\underline{1}},t,) &\indent& \textrm{if}\, s^{\underline{p+1}}\in [\frac{1}{2}, 1]  &\forall s^{\underline{\ell+1}}
 \end{split}\right.$$
   $$ = \left\{
 \begin{split}
  &\mathbf{1}_{(H_{[\ell]},G_{[\ell]})_{1}}(s^{\underline{\ell+1}},s^{\underline{\ell}},\cdots,2\cdot s^{\underline{p+1}},\cdots,s^{\underline{1}},t) &\indent& \textrm{if}\, s^{\underline{p+1}}\in [0, \frac{1}{2}] \\
  &\mathbf{1}_{(H_{[\ell]},G_{[\ell]})_{2}}(s^{\underline{\ell+1}},s^{\underline{\ell}},\cdots,2\cdot s^{\underline{p+1}}-1,\cdots,s^{\underline{1}},t) &\indent& \textrm{if}\, s^{\underline{p+1}}\in [\frac{1}{2}, 1]
 \end{split}\right.$$
%要调齐了?
Consequently, $LHS=RHS$.

\begin{remark}
%(1) The above homotopies can be constructed by linear homotopy because the space $C^{\infty}(M)\times \mathcal{G}_{g_{0}}$ is linear.

(1) Concerning the requirement $(\star)$, for the composite $(H_{[\ell]},G_{[\ell]})_{2}\circ_{p} (H_{[\ell]},G_{[\ell]})_{1}$, its codimension one sets $\widehat{S}(p,c')$ and $\widehat{S}(c,q)$ satisfy
$\widehat{S}(p,c')=\widehat{S}_{1}(p,c')\cup \widehat{S}_{2}(p,c')$ and $\widehat{S}(c,q)=\widehat{S}_{1}(c,q)\cup \widehat{S}_{2}(c,q)$ with $\widehat{S}_{1}(p,c'),\  \widehat{S}_{1}(c,q)$ and  $\widehat{S}_{2}(p,c'),\  \widehat{S}_{2}(c,q)$ corresponding to $(H_{[\ell]1},G_{[\ell]1})$, $(H_{[\ell]2},G_{[\ell]2})$ respectively.
%Additionally, $(\star)$ exert no effect on the boundary of $(H_{[l]},G_{[l]})$ because $(\star)$, roughly speaking, require the boundary is generic.

{(2) The above constructions for homotopies assure that their corresponding cells $(H_{[\ell+1]},G_{[\ell+1]})$ satisfy the codimension one condition. In addition, the cell and homotopy preserve generic and regular conditions once their ends are generic and regular.}
\end{remark}
%因为（\star）要求,粗略的说，边界是generic的， 所以在边界上都没有影响， 对于结合律属于不同的 S都是并所以没有问题

% leinP12 e.g..............................................................................
\begin{subsection}
  {Strict $n$-category $\mathbf{MSWCx}$}
  \label{strict n cate CHCx}
\end{subsection}

%We consider $\mathbb{Z}_{2}$ coefficient case.
From two fixed Morse-Smale pairs $(f^{\alpha},g^{\alpha})$ and $(f^{\beta},g^{\beta})$ on the smooth closed manifold $M$ of dimension $n$, we obtain two MSW complexes $C:=(C(f^{\alpha},g^{\alpha}),\partial)$ and $C':=(C(f^{\beta},g^{\beta}),\partial')$ well known in Morse theory. We will construct an $n$-globular set together with composite and identity which finally turns out to be a strict $n$-category, then we extend it trivially to a strict $\infty$-category by adding $Y'(k)=\{0\},\, k>n$.
Note that the category we construct is different from the example of Leinster \cite{leinster} (p. 12).
As usual the shifting of the complex $C$ is $$(C[i])_{r}:=C_{r+i}, \, i,r\in \Z.$$

We start with the strict $n$-category.

%.....................................................................
$\mathbf{Step\,1:\, source\, and\, target}$

We define the following sets:

$$Y(0)'=\Big\{\phi:C\longrightarrow C' \Big| \partial'\phi=\phi\partial\Big\}.$$
This means that $Y(0)'$ consists of all chain maps from $C$ to $C'$. And as usual $\partial'\phi=\phi\partial$ means $0=-\phi_{r-1}\partial_{r}+\partial'_{r}\phi_{r},\, \forall r=1,\cdots,n.$

\begin{tikzcd}
 C_{n} \arrow[r, "\partial_{n}"]  \arrow[d, "\phi_{n}"]
& \cdots  \arrow[r]
& C_{r+1} \arrow[r, "\partial_{r+1}"]  \arrow[d, "\phi_{r+1}"]
& C_{r} \arrow[r, "\partial_{r}"]  \arrow[d, "\phi_{r}"]
& C_{r-1} \arrow[r, "\partial_{r-1}"]  \arrow[d, "\phi_{r-1}"]
& \cdots \arrow[r]
& C_{0} \arrow[r]  \arrow[d, "\phi_{0}"]
& 0
\\
 C'_{n} \arrow[r, "\partial'_{n}"]
& \cdots  \arrow[r]
& C'_{r+1} \arrow[r, "\partial'_{r+1}"]
& C'_{r} \arrow[r, "\partial'_{r}"]
& C'_{r-1} \arrow[r, "\partial'_{r-1}"]
& \cdots\arrow[r]
& C_{0} \arrow[r]
& 0.
\end{tikzcd}

$$Y(1)'=\Big\{\phi_{[1]}:C\longrightarrow C'[-1] \Big| \phi_{1}-\phi_{0}=\partial'\phi_{[1]}+\phi_{[1]}\partial,\ \forall \phi_{0},\phi_{1}\in Y(0)' \Big\}.$$
So $Y(1)'$ consists of all chain homotopies from arbitrary chain maps $\phi_{0}$ to $\phi_{1}$. And $\phi_{1}-\phi_{0}=\partial'\phi_{[1]}+\phi_{[1]}\partial$ means that $\phi_{1,r}-\phi_{0,r}=\partial'_{r+1}\phi_{[1]r}+\phi_{[1]r-1}\partial_{r},\, \forall r=1,\cdots,n-1.$

\begin{tikzcd}
 C_{n} \arrow[r, "\partial_{n}"]
& \cdots  \arrow[r]
& C_{r+1} \arrow[r, "\partial_{r+1}"]
& C_{r} \arrow[r, "\partial_{r}"]  \arrow[d, shift left, "\phi_{0,r}"swap]  \arrow[d, shift right, "\phi_{1,r}"]  \arrow[ld, "\phi_{[1]r}"]
& C_{r-1} \arrow[r, "\partial_{r-1}"]  \arrow[ld, "\phi_{[1]r-1}"]
& \cdots\arrow[r]
& C_{0} \arrow[r]
& 0 \\
 C_{n} \arrow[r, "\partial_{n}"]
& \cdots  \arrow[r]
& C'_{r+1} \arrow[r, "\partial'_{r+1}"]
& C'_{r} \arrow[r, "\partial'_{r}"]
& C'_{r-1} \arrow[r, "\partial'_{r-1}"]
& \cdots\arrow[r]
& C'_{0} \arrow[r]
& 0.
\end{tikzcd}

$$Y(2)'=\Big\{\phi_{[2]}:C\longrightarrow C'[-2] \Big| \phi_{[1]1}-\phi_{[1]0}=\partial'\phi_{[2]}-\phi_{[2]}\partial, \,\,\forall \phi_{[1]1},\ \phi_{[1]0} \in Y(1)' %both from chain maps $\phi_{0}$ to $\phi_{1}$
 \Big\}.$$
This means that $Y(2)'$ consists of all homotopies of chain homotopies from chain homotopies $\phi_{[1]0}$ to $\phi_{[1]1}$ with the same sources and the same targets as stated in the atavistic property. In addition, $\phi_{[1]1}-\phi_{[1]0}=\partial'\phi_{[2]}-\phi_{[2]}\partial$ means that $\phi_{[1]1,r}-\phi_{[1]0,r}=\partial'_{r+2}\phi_{[2]r}-\phi_{[2]r-1}\partial_{r}\, \forall r=1,\cdots,n-2.$
%\indent $\vdots$\\

In general, for $0\leqslant \ell \leqslant n$
$$Y(\ell)'=\Big\{\phi_{[\ell]}:C\longrightarrow C'[-\ell] \Big| \phi_{[\ell-1]1}-\phi_{[\ell-1]0}=\partial'\phi_{[\ell]}+(-1)^{\ell+1}\phi_{[\ell]}\partial, \forall \phi_{[\ell-1]1},\ \phi_{[\ell-1]0}\in Y(\ell-1)'
% are both from $\phi_{[\ell-2]0}$ to $\phi_{[\ell-2]1}$, where $\phi_{[\ell-2]0},\phi_{[\ell-2]1}\in Y(\ell-2)'
\Big\}.$$
This means that $Y(\ell)'$ consists of all higher homotopies from $\phi_{[\ell-1]0}$ to $\phi_{[\ell-1]1}\in Y(\ell-1)'$ with the same sources and targets in $Y(\ell-2)'$ as required in the atavistic property. Moreover, $$\phi_{[\ell-1]1}-\phi_{[\ell-1]0}=\partial'\phi_{[\ell]}+(-1)^{\ell+1}\phi_{[\ell]}\partial$$ means that $$\phi_{[\ell-1]1,r}-\phi_{[\ell-1]0,r}=\partial'_{r+l}\phi_{[\ell]r}+(-1)^{\ell+1}\phi_{[\ell]r-1}\partial_{r}\, \forall r=1,\cdots,n-\ell.$$

Let $Y'$ be the collection of the above sets, i.e.,
$Y'=\big\{Y'(\ell) \big| 0\leqslant \ell \leqslant n \big\}.$
The construction terminates at $Y(n)'$ because the dimension of $M$ is $n$.

Now we define source and target maps:

For $\phi_{[\ell]}\in Y(\ell)'$ from $\phi_{[\ell-1]0}$ to $\phi_{[\ell-1]1}$, i.e.,  $\phi_{[\ell-1]1}-\phi_{[\ell-1]0}=\partial'\phi_{[\ell]}+(-1)^{\ell+1}\phi_{[\ell]}\partial$, we define
$\mathfrak{s}':Y(\ell)'\longrightarrow Y(\ell-1)';\  \phi_{[\ell]}\mapsto \phi_{[\ell-1]0};$
and
$\mathfrak{t}':Y(\ell)'\longrightarrow Y(\ell-1)';\  \phi_{[\ell]}\mapsto \phi_{[\ell-1]1}.$

\begin{claim}
$\mathfrak{s}'\circ \mathfrak{s}'= \mathfrak{s}'\circ \mathfrak{t}'$; $\mathfrak{t}'\circ \mathfrak{s}'= \mathfrak{t}'\circ \mathfrak{t}'$.
\end{claim}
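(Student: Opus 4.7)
The strategy is to simply unfold the definitions of $\mathfrak{s}'$ and $\mathfrak{t}'$ one level at a time and invoke the compatibility (``atavistic'') requirement that is already built into the definition of $Y(\ell)'$. In fact, the claim is essentially tautological once this atavistic condition is made explicit, so the task reduces to extracting the relevant piece of the definition, entirely parallel to the verification made previously for $\mathfrak{s}, \mathfrak{t}$ on the globular set $\{Y(\ell)\}$ coming from the Morse-Smale pairs.

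Fix $\phi_{[\ell]} \in Y(\ell)'$ with $\ell \geqslant 2$. By the very definition of $Y(\ell)'$, there exist $\phi_{[\ell-1]0}, \phi_{[\ell-1]1} \in Y(\ell-1)'$ with $\mathfrak{s}'(\phi_{[\ell]}) = \phi_{[\ell-1]0}$ and $\mathfrak{t}'(\phi_{[\ell]}) = \phi_{[\ell-1]1}$, satisfying
$$\phi_{[\ell-1]1} - \phi_{[\ell-1]0} = \partial' \phi_{[\ell]} + (-1)^{\ell+1} \phi_{[\ell]} \partial,$$
and, crucially, the atavistic requirement that $\phi_{[\ell-1]0}$ and $\phi_{[\ell-1]1}$ share a common source $\psi^{-} \in Y(\ell-2)'$ and a common target $\psi^{+} \in Y(\ell-2)'$. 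Applying $\mathfrak{s}'$ or $\mathfrak{t}'$ once more yields
$$\mathfrak{s}' \circ \mathfrak{s}'(\phi_{[\ell]}) = \mathfrak{s}'(\phi_{[\ell-1]0}) = \psi^{-} = \mathfrak{s}'(\phi_{[\ell-1]1}) = \mathfrak{s}' \circ \mathfrak{t}'(\phi_{[\ell]}),$$
and likewise
$$\mathfrak{t}' \circ \mathfrak{s}'(\phi_{[\ell]}) = \mathfrak{t}'(\phi_{[\ell-1]0}) = \psi^{+} = \mathfrak{t}'(\phi_{[\ell-1]1}) = \mathfrak{t}' \circ \mathfrak{t}'(\phi_{[\ell]}),$$
which establishes both identities.

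The only point requiring genuine attention is thus the explicit identification of the atavistic condition inside the definition of $Y(\ell)'$, matching the analogous condition $\textcircled{2}$ that was imposed on the Morse cells $Y(\ell)$. This can be exhibited by writing the defining relation for both $\phi_{[\ell-1]0}$ and $\phi_{[\ell-1]1}$ at level $\ell-1$ and observing that the phrase ``with the same sources and targets in $Y(\ell-2)'$ as required in the atavistic property'' in the definition of $Y(\ell)'$ is precisely the equality $\mathfrak{s}'(\phi_{[\ell-1]0}) = \mathfrak{s}'(\phi_{[\ell-1]1})$ and $\mathfrak{t}'(\phi_{[\ell-1]0}) = \mathfrak{t}'(\phi_{[\ell-1]1})$. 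The case $\ell = 1$ is even simpler, as $Y(0)'$ is a plain set of chain maps and the two sides reduce to the same base point in $Y(-1)'$ if one formally adjoins a terminal object. I do not anticipate any real obstacle; the claim is structural and no analytic input is required.
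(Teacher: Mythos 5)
Your proof is correct and matches the approach the paper implicitly takes: the paper itself omits a written proof of this claim (stating it and moving directly to ``So $Y'$ is an $n$-globular set''), but for the analogous claim about $Y(\ell)$ the paper argues exactly as you do, by invoking the atavistic/degeneration condition (there labelled $\textcircled{2}'$) to force common sources and targets one level down. Your identification of the phrase ``with the same sources and targets in $Y(\ell-2)'$'' in the definition of $Y(\ell)'$ as the precise content needed, and the observation that the claim is then tautological, is the right reading; the aside on $\ell=1$ is harmless but unnecessary, since the composites $\mathfrak{s}'\circ\mathfrak{s}'$ etc.\ are only being asserted for $\ell\geqslant 2$, where both land in $Y(\ell-2)'$.
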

%In fact, this is required in the definition of $Y'$.\\
So $Y'$ is an $n$-globular set.

%.............................................
$\mathbf{Step\,2:\, composite\, and \, identity}$

%To obtain the \textcolor{red}{strict $n$-category}, we need to define composite and identity.\\

$1.\mathbf{(composite)}$
Now we define the composition of two $\ell$-cells along a $p$-cell as:\\
$\circ _{p}':Y(\ell)'\times_{p} Y(\ell)'\longrightarrow Y(\ell)'$, for $0\leqslant p < \ell\leqslant n$, $\circ _{p}'(\phi_{[\ell]2},\phi_{[\ell]1}):=\phi_{[\ell]2}\circ _{p}'\phi_{[\ell]1}:=\phi_{[\ell]2}+\phi_{[\ell]1}.$
%$$\phi_{[\ell]2}\circ _{p}'\phi_{[\ell]1}:=\phi_{[\ell]2}+\phi_{[\ell]1}.$$

Notice $\mathfrak{s}'^{\ell-p}(\phi_{[\ell]2})=\mathfrak{t}'^{\ell-p}(\phi_{[\ell]1})$ because $(\phi_{[\ell]2},\phi_{[\ell]1})\in Y(\ell)'\times_{p} Y(\ell)'$.

$2.\mathbf{(identity)}$ We define identitiy map:\\
$\mathbf{1'}:Y(\ell)'\longrightarrow Y(\ell+1)'$, $0\leqslant \ell\leqslant n$; $\mathbf{1}'(\phi_{[\ell]})= \mathbf{1}'_{\phi_{[\ell]}}:=0$, which is from $\phi_{[\ell]}$ to $\phi_{[\ell]}$.

%.............................................
$\mathbf{Step\,3:\, compatible\, conditions}$

We check that the six properties in Definition \ref{strict m cate def} are fulfilled and the globular set $Y'$ is actually a strict $n$-category. To lighten the notions, we denote by $\phi_{[\ell]}:\phi_{[\ell-1]0}\rightsquigarrow \phi_{[\ell-1]1}$ the homotopy $\phi_{[\ell]}$ from $\phi_{[\ell-1]0}$ to $\phi_{[\ell-1]1}$.

$(a)\mathbf{(sources\ and\ targets\ of\ composites)}$

Assume $\phi_{[\ell]}:\phi_{[\ell-1]0}\rightsquigarrow \phi_{[\ell-1]1}\in Y(\ell)'$ and $\phi_{[\ell]}':\phi_{[\ell-1]0}'\rightsquigarrow \phi_{[\ell-1]1}'\in Y(\ell)'$.
For $p=l-1$, let $(\phi_{[\ell]}',\phi_{[\ell]})\in Y(\ell)'\times_{\ell-1} Y(\ell)'$, so $\phi_{[\ell-1]1}=\phi_{[\ell-1]0}'$.
Then we have $\phi_{[\ell]}'\circ_{\phi_{[\ell-1]1}}\phi_{[\ell]}=\phi_{[\ell]}+\phi_{[\ell]}:\phi_{[\ell-1]0}\rightsquigarrow \phi_{[\ell-1]1}'$.
Hence $$\mathfrak{s}'(\phi_{[\ell]}'\circ_{\phi_{[\ell-1]1}}\phi_{[\ell]})=\phi_{[\ell-1]0}=\mathfrak{s}'(\phi_{[\ell]});\,\,\mathfrak{t}'(\phi_{[\ell]}\circ_{\phi_{[\ell-1]1}}\phi_{[\ell]})=\phi_{[\ell-1]1}'=\mathfrak{t}'(\phi_{[\ell]}').$$
For $p\leqslant \ell-2$, let $(\phi_{[\ell]}',\phi_{[\ell]})\in Y(\ell)'\times_{p} Y(\ell)'$, which means $\mathfrak{t}'_{\ell-p}(\phi_{[\ell]})=\mathfrak{s}'_{\ell-p}(\phi_{[\ell]}')$.
We have $\phi_{[\ell]}'\circ_{\phi_{[p]}}\phi_{[\ell]}:\phi_{[\ell-1]0}'\circ_{\phi_{[p]}}\phi_{[\ell-1]0}\rightsquigarrow\phi_{[\ell-1]1}'\circ_{\phi_{[p]}}\phi_{[\ell-1]1}$. So $\mathfrak{s}'(\phi_{[\ell]}'\circ_{\phi_{[p]}}\phi_{[\ell]})=\phi_{[\ell-1]0}'\circ_{\phi_{[p]}}\phi_{[\ell-1]0}=\mathfrak{s}'(\phi_{[\ell]}')\circ_{\phi_{[p]}}\mathfrak{s}'(\phi_{[\ell]}).$ 
Similarly, the properties hold for the target map $\mathfrak{t}'$.

$(b)\mathbf{(sources\ and\ targets\ of\ identities)}$

For $\phi_{[\ell]}\in Y(\ell)'$, $0\leqslant \ell \leqslant n$, by definition, $\mathbf{1}'_{\phi_{[\ell]}}=0:\phi_{[\ell]}\rightsquigarrow \phi_{[\ell]}$, we have $\mathfrak{s}'(\mathbf{1}'_{\phi_{[\ell]}})=\phi_{[\ell]},\mathfrak{t}'(\mathbf{1}'_{\phi_{[\ell]}})=\phi_{[\ell]}.$

$(c)\mathbf{(associativity)}$

Let $\phi_{[\ell]i}:\phi_{[\ell-1]i,0}\rightsquigarrow \phi_{[\ell-1]i,1}$ ($i=1,2,3$).
We have $$\phi_{[\ell]3}\circ_{p}(\phi_{[\ell]2}\circ_{p}\phi_{[\ell]1})=\phi_{[\ell]3}+(\phi_{[\ell]2}+\phi_{[\ell]1})=(\phi_{[\ell]3}+\phi_{[\ell]2})+\phi_{[\ell]1}=(\phi_{[\ell]3}\circ_{p}\phi_{[\ell]2})\circ_{p}\phi_{[\ell]1}.$$
%要不要说一下是从哪到哪的同伦？
The point is that the operation ``$+$'' has the associativity.

$(d)\mathbf{(identities)}$

Let $0 \leqslant p < \ell \leqslant n$ and $\phi_{[\ell]}\in Y(\ell)'$, we have
$\mathbf{1'}^{\ell-p}(\mathfrak{t}^{\ell-p}(\phi_{[\ell]}))\circ_{p}\phi_{[\ell]}=0+\phi_{[\ell]}=\phi_{[\ell]}+0=\phi_{[\ell]}\circ_{p}\mathbf{1'}^{\ell-p}(\mathfrak{s}^{\ell-p}(\phi_{[\ell]})).$
%要不要说一下是从哪到哪的同伦？

$(e)\mathbf{(binary\ interchange)}$

Let $0 \leqslant q < p < \ell \leqslant n$ and $\phi_{[\ell]i}\in Y(\ell)' (i=1,2,3,4)$ satisfy $(\phi_{[\ell]4},\phi_{[\ell]3}) \in Y(\ell)'\times_{p}Y(\ell)'$, $(\phi_{[\ell]2},\phi_{[\ell]1}) \in Y(\ell)'\times_{p}Y(\ell)'$, $(\phi_{[\ell]4},\phi_{[\ell]2}) \in Y(\ell)'\times_{q}Y(\ell)'$, $(\phi_{[\ell]3},\phi_{[\ell]1}) \in Y(\ell)'\times_{q}Y(\ell)'$.
We have $$(\phi_{[\ell]4}\circ_{p}\phi_{[\ell]3})\circ_{q}(\phi_{[\ell]2}\circ_{p}\phi_{[\ell]1})=(\phi_{[\ell]4}+\phi_{[\ell]3})+(\phi_{[\ell]2}+\phi_{[\ell]1})
=(\phi_{[\ell]4}+\phi_{[\ell]2})+(\phi_{[\ell]3}+\phi_{[\ell]1})=(\phi_{[\ell]4}\circ_{q}\phi_{[\ell]2})\circ_{p}(\phi_{[\ell]3}\circ_{q}\phi_{[\ell]1}).$$
%要不要说一下是从哪到哪的同伦？从哪到哪是自动的？

$(f)\mathbf{(nullary\ interchange)}$

For $0\leqslant p< \ell<n$ and $(\phi_{[\ell]}',\phi_{[\ell]})\in Y(\ell)'\times_{q} Y(\ell)'$,
let $\phi_{[\ell]}:\phi_{[\ell-1]0}\rightsquigarrow\phi_{[\ell-1]1}$, $\mathbf{1}'_{\phi_{[\ell]}}=0:\phi_{[\ell]}\rightsquigarrow\phi_{[\ell]}$;
$\phi_{[\ell]}':\phi_{[\ell-1]0}'\rightsquigarrow\phi_{[\ell-1]1}'$, $\mathbf{1}'_{\phi_{[\ell]}'}=0:\phi_{[\ell]}'\rightsquigarrow\phi_{[\ell]}'$, then we have

$$\mathbf{1}'_{\phi_{[\ell]}'}\circ_{p}\mathbf{1}'_{\phi_{[\ell]}}=0+0=0:\phi_{[\ell]}'\circ_{p}\phi_{[\ell]}\rightsquigarrow\phi_{[\ell]}'\circ_{p}\phi_{[\ell]};
\mathbf{1}'_{\phi_{[\ell]}'\circ_{p}\phi_{[\ell]}}=0:\phi_{[\ell]}'\circ_{p}\phi_{[\ell]}\rightsquigarrow\phi_{[\ell]}'\circ_{p}\phi_{[\ell]}.$$

%以下可能有些问题要带下角标。应该没问题，为简单略去下角标在意义清晰下。
Here are some examples to illustrate the composites.

(1) two $1$-cells composites along $0$-cell.
%\indent (a) Let's first view the composite along a $0$-cell.\\

Let $\phi_{[1]1}:C\longrightarrow C'[1]$ be a chain homotopy from $\phi_{1}$ to $\phi_{2}$, i.e., $\phi_{2}-\phi_{1}= \phi_{[1]1}\circ \partial+\partial'\circ \phi_{[1]1}$. Let $\phi_{[1]2}:C\longrightarrow C'[1]$ be another chain homotopy from $\phi_{2}$ to $\phi_{3}$, that is, $\phi_{3}-\phi_{2}= \phi_{[1]2}\circ \partial+\partial'\circ \phi_{[1]2}$. Adding them together, we obtain $\phi_{3}-\phi_{1}= (\phi_{[1]1}+\phi_{[1]2})\circ \partial+\partial'\circ (\phi_{[1]1}+\phi_{[1]2}).$
Then the composite of $\phi_{[1]1}$ and $\phi_{[1]2}$ along $\phi_{2}$ is
$\phi_{[1]2}\circ_{\phi_{2}} \phi_{[1]1}:= \phi_{[1]1}+\phi_{[1]2}.$

(2) two $2$-cells composite along $1$-cell and $0$-cell.

 (a) Composite along a $1$-cell.

Let $S_{1},S_{2},S_{3}:C\longrightarrow C'[1]$ be three chain homotopies from chain map $\phi_{0}:C\longrightarrow C'$ to chain map $\phi_{1}:C\longrightarrow C'$. Let $\phi_{[2]1}=T:C\longrightarrow C'[2]$ be a homotopy of chain homotopies from $S_{1}$ to $S_{2}$, that is, $S_{2}-S_{1}= -T\circ \partial+\partial'\circ T$; Let $\phi_{[2]2}=T':C\longrightarrow C'[2]$ be the chain homotopy from $S_{2}$ to $S_{3}$, that is, $S_{3}-S_{2}= -T\circ \partial+\partial'\circ T$. So $S_{3}-S_{1}= -(T+T')\circ \partial+\partial'\circ (T+T')$.

Then the composition of $T$ and $T'$ along $S_{2}$ is
$T'\circ_{S_{2}} T:= T+T'.$

(b) Composite along a $0$-cell.

Let $S_{0},S_{1}:C\longrightarrow C'[1]$ be two chain homotopies from chain map $\phi_{0}:C\longrightarrow C'$ to chain map $\phi_{1}:C\longrightarrow C'$, and $S_{0}',S_{1}':C\longrightarrow C'[1]$ be two chain homotopies from chain map $\phi_{1}:C\longrightarrow C'$ to chain map $\phi_{2}:C\longrightarrow C'$. Let $\phi_{[2]1}=T:C\longrightarrow C'[2]$ be a homotopy of chain homotopies from $S_{0}$ to $S_{1}$, that is, $S_{1}-S_{0}= -T\circ \partial+\partial'\circ T$, and $\phi_{[2]2}=T':C\longrightarrow C'[2]$ be the chain homotopy from $S_{0}'$ to $S_{1}'$, that is, $S_{1}'-S_{0}'= -T'\circ \partial+\partial'\circ T'$. By the composite of $2$-cells along a $1$-cell above, we have $S_{0}'\circ_{\phi_{1}}S_{0}=S_{0}'+S_{0}$ and $S_{1}'\circ_{\phi_{1}}S_{1}=S_{1}'+S_{1}$. Then we get
\begin{equation}
\begin{aligned}
 S_{1}'\circ_{\phi_{1}}S_{1}-S_{0}'\circ_{\phi_{1}}S_{0}
  = &S_{1}'+S_{1}-(S_{0}'+S_{0}) \\
  = &-T'\circ \partial+\partial'\circ T'+(-T\circ \partial+\partial'\circ T) \\
  = &-(T'+T)\circ \partial+ \partial'\circ (T'+T).\nonumber
\end{aligned}
\end{equation}
Then the composition of $T$ and $T'$ along $\phi_{1}$ is
 $T'\circ_{\phi_{1}} T:= T+T'.$

Finally we extend the above strict $n$-category trivially to strict $\infty$-category by adding $Y'(k)=\{0\},\, k>n$ and the morphisms are also extended trivially. We call it the strict $\infty$-category $\mathcal{B}$.

% .............................................................................
\begin{subsection}
  {Weak $\infty$-functor}
  \label{weak infty functor}
\end{subsection}

For the weak Morse $\infty$-category $\mathcal{A}$ and the strict $\infty$-category$\mathcal{B}$, the construction of the functor $\mathcal{F}: \mathcal{A}\rightarrow \mathcal{B}$ is based on the boundary behaviour of the compactified moduli spaces of dimension one which have been studied thoroughly at the end of \S \ref{Moduli Space and its Compactification sect3}.
%第三章移过来的22页

%According to \textbf{The boundaries of the $1$-dimensional compactified moduli space} in \S \ref{Moduli Space and its Compactification sect3},
We firstly define a linear map between MSW complexes {corresponding to $(f^{\alpha},g^{\alpha})$ and $(f^{\beta},g^{\beta})$ respectively}
\begin{align}
\phi_{[\ell]}:=\Big\{\phi_{[\ell]r}: C_{r}\longrightarrow C'_{r+\ell} \Big| \phi_{[\ell]r}(p)
&=\sum\limits_{c'\in Crf^{\beta}\atop ind c'= ind p+\ell  } \sharp_{2}\mathcal{M}_{p,c'}^{H_{[\ell]}(\widehat{S}(p,c'),\cdot,\cdot)}\cdot c'   \nonumber \\
&=\sum\limits^k\limits_{i=1}\sum\limits_{c'\in Crf^{\beta}\atop ind c'= ind p+\ell } \sharp_{2}\mathcal{M}_{p,c'}^{H_{[\ell]}(\widehat{S}_{i}(p,c'),\cdot,\cdot)}\cdot c'   \nonumber \\
&=\sum\limits^k\limits_{i=1}\sum\limits_{c'\in Crf^{\beta};\ ind c'= ind p+\ell \atop \mathbf{\widehat{s}_{0}}\in \widehat{S}_{i}(p,c')} \sharp_{2}\mathcal{M}_{p,c'}^{H_{[\ell]}(\mathbf{\widehat{s}_{0}},\cdot,\cdot)}\cdot c',\  {r=0,\cdots, {dimM-\ell}  }\Big\}, \nonumber
\end{align}
where $\widehat{S}(p,c')=\bigcup \limits^k\limits_{i=1}\widehat{S}_{i}(p,c')$ is the codimension one non-generic set for some $k\in \Z_{>0}$ finite, and each $\widehat{S}_{i}(p,c')$ is a codimension one manifold. Here by $\sharp_{2}$ we mean taking the $\mathbb{Z}_{2}$ coefficient as usual.

%因为mod2所以正负号是一样的.这里比较粗糙，比较精确需要定向。
%{More precisely, let $C_{r}=\big\{a\in Cr f^{\alpha}\big| ind a=r  \big\}$ and $C'_{r}=\big\{a\in Cr f^{\beta}\big| ind a=r  \big\}$, for $\forall r=0,\cdots, dimM=n $.}
{ For any fixed $p\in C_{r}$ and $\forall r=0,\cdots, dimM-\ell=n-\ell $, we have\\
\begin{equation}
\begin{split}
&(\phi_{[\ell-1]0,r}+\phi_{[\ell-1]1,r}+\phi_{[\ell]r-1}\partial_{r}+\partial_{r+\ell}'\phi_{[\ell]r})\cdot p\\
=&\big(\sum\limits_{q\in C'_{r+\ell-1}}\sharp_{2}\mathcal{M}^{H_{[\ell]}(0,s^{\underline{\ell-1}},\cdots,s^{\underline{1}},\cdot,\cdot)}_{p,q}
+ \sum\limits_{q\in C'_{r+\ell-1}}\sharp_{2}\mathcal{M}^{H_{[\ell]}(1,s^{\underline{\ell-1}},\cdots,s^{\underline{1}},\cdot,\cdot)}_{p,q}\\
 &+ \sum\limits_{c\in C_{r-1}} \sharp_{2}\mathcal{M}^{f^{\alpha}}_{p,c} \sum\limits_{q\in C'_{r+\ell-1}} \sharp_{2}\mathcal{M}^{H_{[\ell]}(\mathbf{\widehat{s}},\cdot,\cdot)}_{c,q}
+ \sum\limits_{c'\in C'_{r+\ell}} \sharp_{2}\mathcal{M}^{H_{[\ell]}(\mathbf{\widehat{s}},\cdot,\cdot)}_{p,c'}\sum\limits_{q\in C'_{r+\ell-1}} \sharp_{2}\mathcal{M}^{f^{\beta}}_{c',q})\cdot q\\
=&0.\nonumber
\end{split}
\end{equation} }
{For fixed $q$, the geometric meaning for
\begin{equation}
\begin{split}
&\sum\limits_{q\in C'_{r+\ell-1}}\sharp_{2}\mathcal{M}^{H_{[\ell]}(0,s^{\underline{\ell-1}},\cdots,s^{\underline{1}},\cdot,\cdot)}_{p,q}
+ \sum\limits_{q\in C'_{r+\ell-1}}\sharp_{2}\mathcal{M}^{H_{[\ell]}(1,s^{\underline{\ell-1}},\cdots,s^{\underline{1}},\cdot,\cdot)}_{p,q}\\
&+ \sum\limits_{c\in C_{r-1}} \sharp_{2}\mathcal{M}^{f^{\alpha}}_{p,c} \sum\limits_{q\in C'_{r+\ell-1}} \sharp_{2}\mathcal{M}^{H_{[\ell]}(\mathbf{\widehat{s}},\cdot,\cdot)}_{c,q}
+ \sum\limits_{c'\in C'_{r+\ell}} \sharp_{2}\mathcal{M}^{H_{[\ell]}(\mathbf{\widehat{s}},\cdot,\cdot)}_{p,c'}\sum\limits_{q\in C'_{r+\ell-1}} \sharp_{2}\mathcal{M}^{f^{\beta}}_{c',q}\nonumber
\end{split}
\end{equation}
is to count (in the sense of$\mod 2$) the boundary points $$\partial \overline{\mathcal{M}}_{p,q}^{H_{[\ell]}} \stackrel{(\ref{E ell})}{=}\mathcal{M}_{p,q}^{H_{[\ell]}(0,\cdots,\cdot,\cdot)}\cup \mathcal{M}_{p,q}^{H_{[\ell]}(1,\cdots,\cdot,\cdot)}
\cup\bigcup\limits_{c'\in Crf^{\beta} \atop ind c'= ind p+\ell}\mathcal{M}_{p,c'}^{H_{[\ell]}(\widehat{S}(p,c'),\cdot,\cdot)}\times \mathcal{M}_{c',q}^{f^{\beta}}
\cup\bigcup\limits_{c\in Crf^{\alpha} \atop  ind c= ind q-\ell }\mathcal{M}_{p,c}^{f^{\alpha}}\times \mathcal{M}_{c,q}^{H_{[\ell]}(\widehat{S}(c,q),\cdot,\cdot)}.$$
The last equality follows. }

%Since the number of the boundaries equals zero (in the sense of mod$2$),  we have
The formula
$$\phi_{[\ell-1]0,r}+\phi_{[\ell-1]1,r}+\phi_{[\ell]r-1}\partial_{r}+\partial'_{r+\ell}\phi_{[\ell]r}=0,\ \forall r=0,\cdots, dimM-\ell=n-\ell, $$
can also be written as
$$\phi_{[\ell-1]1,r}-\phi_{[\ell-1]0,r}=(-1)^{\ell+1}\phi_{[\ell]r-1}\partial_{r}+\partial'_{r+\ell}\phi_{[\ell]r},\ \forall r=0,\cdots, dimM-\ell=n-\ell.$$
{The map $\phi_{[\ell]}$ is exactly the higher homotopy defined in \S \ref{strict n cate CHCx}!}

%\textcolor{red}{The above constructions of different cases motivate us to define a functor through the boundaries of the compactified moduli space, especially the non-generic part.}

%..........
%The general $\phi_{[\ell]}$ can be deduced from \textcolor{red}{The boundary of the 1-dimensional compactified moduli space} in \S \ref{Moduli Space and its Compactification sect3}.

The construction of the functor $\mathcal{F}:\mathcal{A}\longrightarrow \mathcal{B}$ is as follows.    For $(H_{[\ell]},G_{[\ell]})\in \mathcal{A}$, we define the higher $\ell$-homotopy between $C:=(C(f^{\alpha},g^{\alpha}),\partial)$ and $C':=(C(f^{\beta},g^{\beta}),\partial')$ to be
$$\phi_{[\ell]}:=\mathcal{F}(H_{[\ell]},G_{[\ell]}),$$
such that, for any $r\in \Z_{\geqslant 0}$,
\begin{equation}
\begin{split}
\phi_{[\ell]r}:=\mathcal{F}(H_{[\ell]},G_{[\ell]}): C_{r} &\longrightarrow C'_{r+\ell} \\
 c_r &\mapsto \phi_{[\ell]r}(c_{r})= \sum\limits_{c'_{r+\ell}\in Crf^{\beta}\atop indc_{r}-indc'_{r+\ell}=-\ell}\sharp_{2}\Big(\mathcal{M}_{c_{r}\, c'_{r+\ell}}^{H_{[\ell]}(\widehat{S},\cdot)} \Big)\, c'_{r+\ell} \nonumber
\end{split}
\end{equation}
with
$\widehat{S}$ the non-generic codimension one set with respect to $c_{r}$, $c'_{r+\ell}$ and $H_{[\ell]}$.

%\textcolor{red}{\indent Reminiscent of the whole process: from $(H_{[\ell]},G_{[\ell]})$ we are able to obtain the compactified moduli space $\overline{\mathcal{M}}_{p q}^{H_{[\ell]}}$ of dimension one which need to require $indp-indq+\ell=1$. The boundaries of $\overline{\mathcal{M}}_{p q}^{H_{[\ell]}}$ gives us two types information: \\
%(1) The non-generic $\widehat{S}$ gives us the non-generic part of the boundaries which leads to the construction of  $\phi_{[\ell]}$; Need to say, here we consider the boundaries which involves $\mathcal{M}_{pc'}^{H_{[\ell]}(\widehat{S}(p,c'),\cdot)}$ and $\mathcal{M}_{cq}^{H_{[\ell]}(\widehat{S}(c,q),\cdot)}$, where $\widehat{S}(p,c'),\widehat{S}(c,q)$ are the corresponding non-generic codimension one sets, for all $c\in Crf^{\alpha},\ c'\in Crf^{\beta},\  indp-indc'=indc-indq=-\ell$.\\
%\widehat{S} 定义了\phi_{[\ell]}
%(2) All the boundaries including the natural part and the non-generic part display the relation that $\phi_{[\ell]}$ should possess as a higher $\ell$ homotopy via $\mathcal{F}$, i.e., $\phi_{[\ell-1]1}-\phi_{[\ell-1]0}=\partial'\phi_{[\ell]}+(-1)^{\ell+1}\phi_{[\ell]}\partial$.}
%定义的\phi_{[\ell]}确实满足高阶同伦关系，所以确实是高阶同伦

Now we prove that the map $\mathcal{F}$ is indeed a weak $\infty$-functor, that is, it commutes with the composites and preserves the identity.

\begin{theorem}\label{inftyfunctor}
$\mathcal{F}:\mathcal{A}\longrightarrow \mathcal{B}$ is a weak $\infty$-functor.
\end{theorem}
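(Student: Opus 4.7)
The plan is to verify three items that together establish that $\mathcal{F}$ is a weak $\infty$-functor: (a) $\mathcal{F}$ sends $\ell$-cells of $\mathcal{A}$ to $\ell$-cells of $\mathcal{B}$; (b) $\mathcal{F}$ intertwines the source and target maps; (c) $\mathcal{F}$ is compatible with the composites $\circ_p$ and identities $\mathbf{1}$, the former possibly only up to homotopy since $\mathcal{A}$ is weak.

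For (a), I would show that $\phi_{[\ell]} := \mathcal{F}(H_{[\ell]},G_{[\ell]})$ satisfies the defining relation of $Y(\ell)'$, namely
$$\phi_{[\ell-1]1}-\phi_{[\ell-1]0}=\partial'\phi_{[\ell]}+(-1)^{\ell+1}\phi_{[\ell]}\partial.$$
This is essentially the content already extracted from equation (\ref{E ell}): when $\mathrm{ind}(p)-\mathrm{ind}(q)=1-\ell$, the one-dimensional compactified moduli space $\overline{\mathcal{M}}_{p,q}^{H_{[\ell]}}$ is a compact $1$-manifold with boundary whose components are arcs and circles; each arc contributes exactly two boundary points, so the total count of $\partial\overline{\mathcal{M}}_{p,q}^{H_{[\ell]}}$ mod $2$ vanishes, and this vanishing, decomposed according to the four boundary types $(i)$--$(iv)$ listed in \S\ref{Moduli Space and its Compactification sect3}, is exactly the four-term identity above.

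For (b), compatibility is built into the definition: $\mathfrak{s}(H_{[\ell]},G_{[\ell]})=(H_{[\ell]},G_{[\ell]})(0,\cdots)$ is sent by $\mathcal{F}$ to $\phi_{[\ell-1]0}$, which is precisely $\mathfrak{s}'(\phi_{[\ell]})$; the target is symmetric. For (c), I would deduce the composite formula from the remark in \S\ref{Morse weak n pre-category}: the non-generic codimension-one set $\widehat{S}$ of $(H_{[\ell]},G_{[\ell]})_2\circ_p(H_{[\ell]},G_{[\ell]})_1$ decomposes as a disjoint union $\widehat{S}_1\sqcup\widehat{S}_2$ lying in the two half-subcubes $\{s^{\underline{p+1}}\in[0,\tfrac12]\}$ and $\{s^{\underline{p+1}}\in[\tfrac12,1]\}$ of $[0,1]^\ell$. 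Hence the counts defining $\phi_{[\ell]}$ add:
$$\mathcal{F}((H,G)_2\circ_p(H,G)_1)=\mathcal{F}((H,G)_2)+\mathcal{F}((H,G)_1)=\mathcal{F}((H,G)_2)\circ'_p\mathcal{F}((H,G)_1).$$
Since $\circ'_p$ on $\mathcal{B}$ is strict addition (hence strictly associative), the weak associativity in $\mathcal{A}$ collapses, after application of $\mathcal{F}$, to strict equality in $\mathcal{B}$, so $\mathcal{F}$ respects composition up to homotopy. For identities, $\mathbf{1}_{(H_{[\ell]},G_{[\ell]})}$ is constant in $s^{\underline{\ell+1}}$, so any solution to the gradient flow equation comes in a continuous $[0,1]$-family parameterized by $s^{\underline{\ell+1}}$; the generic/regular requirements on $G_{[\ell+1]}$ then force the corresponding isolated moduli space to be empty for the index difference $-(\ell+1)$, giving $\mathcal{F}(\mathbf{1}_{(H_{[\ell]},G_{[\ell]})})=0=\mathbf{1}'_{\mathcal{F}(H_{[\ell]},G_{[\ell]})}$.

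The main obstacle I expect is the transversality bookkeeping for step (c): one must verify that the reparametrization used in defining $\circ_p$ introduces no new non-generic parameter at the smooth-gluing locus $s^{\underline{p+1}}=\tfrac12$, so that $\widehat{S}$ is genuinely a \emph{disjoint} union $\widehat{S}_1\sqcup\widehat{S}_2$ with no correction terms and the $\mathbb{Z}_2$-count is additive on the nose. This reduces to checking that the Fredholm linearization $\widehat{F}_{[\ell]}$ of the concatenated homotopy remains surjective along the gluing hypersurface, which follows from the smooth-gluing condition in the definition of $\circ_p$ together with Lemma \ref{goingtohigher} applied locally around $s^{\underline{p+1}}=\tfrac12$. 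The vanishing argument for the identity case is the other delicate point; here the correct formulation is that the one-parameter family of trivial solutions sweeps out a positive-dimensional piece of the parameter moduli space, which by the requirement $(\star)$ on $G_{[\ell+1]}$ contributes no isolated points of the expected dimension.
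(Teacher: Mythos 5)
Your proposal is correct and follows essentially the same route as the paper: the well-definedness of $\phi_{[\ell]}$ via the mod-$2$ count of $\partial\overline{\mathcal{M}}_{p,q}^{H_{[\ell]}}$ is exactly what the paper establishes just before the theorem using (\ref{E ell}), compatibility with $\circ_p$ is proved via the decomposition of the composite's codimension-one non-generic set into the two reparametrized pieces $\widetilde{\widehat{S}}_1\cup\widetilde{\widehat{S}}_2$ (giving additivity of the $\mathbb{Z}_2$-counts), and preservation of identities is proved by the negative-expected-dimension argument showing the relevant moduli space is empty. Your extra remarks on transversality at $s^{\underline{p+1}}=\tfrac12$ and on source/target compatibility are sound but are handled in the paper through the requirement $(\star)$ and the construction of $\mathcal{F}$ rather than within the proof itself.
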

\begin{proof}

\textbf{(1) Commuting with composites}

For any $\ell>1$, let $(H_{[\ell]},G_{[\ell]})_{1},(H_{[\ell]},G_{[\ell]})_{2}\in Y(\ell)$ with the associated codimension one sets $\widehat{S}_{1}$ ($\widehat{S}_{1}(p,c')$, $\widehat{S}_{1}(c,q)$) and $\widehat{S}_{2}$ ($\widehat{S}_{2}(p,c')$, $\widehat{S}_{2}(c,q)$) respectively. Besides, let  $\phi_{[\ell]1}=\mathcal{F}((H_{[\ell]},G_{[\ell]})_{1})$ and $\phi_{[\ell]2}=\mathcal{F}((H_{[\ell]},G_{[\ell]})_{2})$.

We need to check that $$\mathcal{F}((H_{[\ell]},G_{[\ell]})_{2}\circ (H_{[\ell]},G_{[\ell]})_{1})=\phi_{[\ell]1}+\phi_{[\ell]2}.$$
%In fact, according to the previous analysis about how $\mathcal{F}$ is constructed, we know
For fixed $H_{[\ell]}$, and any given critical points $c_{r}\in Crf^{\alpha}$ with  $indc_{r}=r$ and $c'_{r+\ell}\in Crf^{\beta}$ with  $indc'_{r+\ell}=r+\ell$, the key factor to obtain $\phi_{[\ell]}=\{\phi_{[\ell]r}\}_{r=0}^{n}$ is the codimension one set $\widehat{S}(c_{r},c'_{r+\ell})$.
By composing $(H_{[\ell]},G_{[\ell]})_{1}$ and $(H_{[\ell]},G_{[\ell]})_{2}$ with corresponding non-generic sets $\widehat{S}_{1}(c_{r},c'_{r+\ell})$ and $\widehat{S}_{2}(c_{r},c'_{r+\ell})$ respectively ($\forall r=0,1,\cdots,n$) along a $k$-cell,
we obtain the new codimension one set $$\widehat{S}(c_{r},c'_{r+\ell})=\widetilde{\widehat{S}}_{1}(c_{r},c'_{r+\ell})\cup \widetilde{\widehat{S}}_{2}(c_{r},c'_{r+\ell})\subset [0,1]^{\ell}$$ for $(H_{[\ell]},G_{[\ell]})_{2}\circ (H_{[\ell]},G_{[\ell]})_{1}$ {(Compare the schematic Figure \ref{gluealongdifferentcells})}, where
$$\widetilde{\widehat{S}}_{1}(c_{r},c'_{r+\ell})=\Big\{ \widetilde{\widehat{s}}_{1}=(\widetilde{s}_{[\ell]},\cdots,\widetilde{s}_{[1]})=(s_{[\ell]},\cdots,\frac{s_{[k+1]}}{2},\cdots,s_{[1]}) \Big|\widehat{s}_{1}=(s_{[\ell]},\cdots,s_{[1]})\in \widehat{S}_{1}\subset[0,1]^{\ell}  \Big\},$$
%and
$$\widetilde{\widehat{S}}_{2}(c_{r},c'_{r+\ell})=\Big\{ \widetilde{\widehat{s}}_{2}=(\widetilde{s'}_{[\ell]},\cdots,\widetilde{s'}_{[1]})=(s'_{[\ell]},\cdots,\frac{s'_{[k+1]}+1}{2},\cdots,s'_{[1]}) \Big|\widehat{s'}_{1}=(s'_{[\ell]},\cdots,s'_{[1]})\in \widehat{S}_{2}\subset[0,1]^{\ell}  \Big\}.$$

%\textcolor{red}{Need to notice that no matter along which $k$-cell $(H_{[\ell]},G_{[\ell]})_{1}$ and $(H_{[\ell]},G_{[\ell]})_{2}$ are glued, we write them all as $(H_{[\ell]},G_{[\ell]})_{2}\circ (H_{[\ell]},G_{[\ell]})_{1}$. }
%the corresponding $\phi_{[\ell]}$ of $(H_{[\ell]},G_{[\ell]})_{2}\circ_{k} (H_{[\ell]},G_{[\ell]})_{1}$ is the same.}
%(Compare the schematic Figure \ref{gluealongdifferentcells}).
\begin{figure}
  \centering
  \includegraphics[scale=0.5]{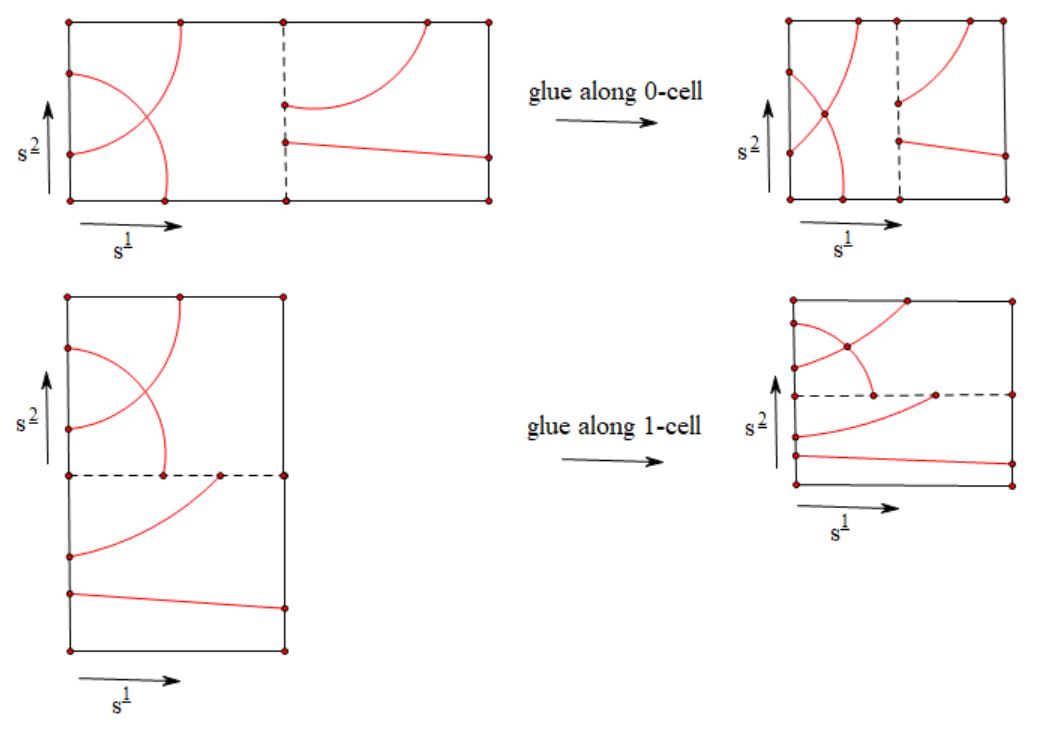}
  \caption{Gluing $(H_{[2]},G_{[2]})_{1}$ and $(H_{[2]},G_{[2]})_{2}$ along different cells}
  \label{gluealongdifferentcells}
\end{figure}
Consequently,
for any $p\in Crf^{\alpha}$,
\begin{equation}
\begin{split}
\mathcal{F}((H_{[\ell]},G_{[\ell]})_{2}\circ (H_{[\ell]},G_{[\ell]})_{1})p=:\phi_{[\ell]}p
&=\sum\limits_{i,j}\sum\limits_{c'\in Crf^{\beta}\atop indp-indc'=-l}\sharp_{2} (\mathcal{M}_{p,c'}^{H_{[\ell]2}\circ H_{[\ell]1}(\widetilde{\widehat{S}}_{1i}(p,c')\cup \widetilde{\widehat{S}}_{2j}(p,c'),\cdot)})\cdot c'\\
&= \sum\limits_{i}\sum\limits_{c'\in Crf^{\beta}\atop indp-indc'=-l}\sharp_{2} (\mathcal{M}_{p,c'}^{H_{[\ell]1}(\widehat{S}_{1i}(p,c'),\cdot)})\cdot c'+ \sum\limits_{j}\sum\limits_{c'\in Crf^{\beta}\atop indp-indc'=-l}\sharp_{2}(\mathcal{M}_{p,c'}^{H_{[\ell]2}(\widehat{S}_{2j}(p,c'),\cdot)})\cdot c'\\
&=(\phi_{[1]1}+\phi_{[1]2})p
=\mathcal{F}((H_{[\ell]},G_{[\ell]})_{2})p\circ' \mathcal{F}((H_{[\ell]},G_{[\ell]})_{1})p,\nonumber
\end{split}
\end{equation}
where  $\widehat{S}_{1}(p,c')=\bigcup\limits_{finite\ i} \widehat{S}_{1i}(p,c')$, $\widehat{S}_{2}(p,c')=\bigcup\limits_{finite \ j} \widehat{S}_{2j}(p,c')$.

\textbf{(2) Preserving identities}

For any $1<\ell\leqslant n=dimM$, given $(H_{[\ell]},G_{[\ell]})\in Y(\ell)$ with the associated non-generic codimension one set $\widehat{S}=\bigcup \widehat{S}_{i}$,
%Recall that we use subscript to represent the union of manifolds, and superscript to represent stratification.
and the identity $(H_{[\ell+1]},G_{[\ell+1]})=\mathbf{1}_{(H_{[\ell]},G_{[\ell]})}$ with the non-generic codimension one set $\widehat{S}'=\widehat{S}\times[0,1]$. We check that $\mathcal{F}(\mathbf{1}_{(H_{[\ell]},G_{[\ell]})})=0.$
%S_{i}表示是codim1流形， S^{i}表示strata按codimension分,代表codimen i 的子集。
For brevity, we take the case $\ell=1$ as an example, and the conclusion holds for $(H_{[\ell]},G_{[\ell]})$ similarly.

{For critical points $p\in Crf^{\alpha},\  c',q\in Crf^{\beta}$ such that  $indp-indq=-1,\  indp-indc'=-2$, for $(H_{[1]},G_{[1]})$ and $(H_{[2]},G_{[2]}):=\mathbf{1}_{(H_{[1]},G_{[1]})}$,
\begin{equation}
\begin{split}
\mathcal{F}(H_{[2]},G_{[2]})p(=:\phi_{[2]}p)
&= \sum\limits_{c'\in Crf^{\beta}\atop indp-indc'=-2}\sum\limits_{(s_{00},s_{01})\in \widehat{S}_{[2]}(p,c')}\sharp_{2} (\mathcal{M}_{p,c'}^{H_{[2]}(s_{00},s_{01},\cdot)})\cdot c' \\
&= \sum\limits_{c'\in Crf^{\beta}\atop indp-indc'=-2}\sum\limits_{(s_{00},s_{01})\in \widehat{S}_{[2]}(p,c')}\sharp_{2} (\mathcal{M}_{p,c'}^{H_{[1]}(s_{01},\cdot)})\cdot c'%\\
 =0,\nonumber
\end{split}
\end{equation}
where $\widehat{S}_{[2]}(p,c')$ is the non-generic codimension one set with respect to $p,c'$ and $H_{[2]}$. The second equality holds because of the definition of the identity $\mathbf{1}$: $H_{[2]}(s^{\underline{2}},s^{\underline{1}},\cdot)=\mathbf{1}_{H_{[1]}(s^{\underline{1}},\cdot)}$, which means that $\forall s_{2},\, H_{[2]}(s_{2},s^{\underline{1}},\cdot)= H_{[1]}(s^{\underline{1}},\cdot)$, therefore $H_{[2]}(s_{00},s_{01},\cdot)=H_{[2]}(\cdot,s_{01},\cdot)=H_{[1]}(s_{01},\cdot)$. %not generic, because H_{[2]} 的那个就generic，这俩还等。
The third one holds because $\mathcal{M}_{p,c'}^{H_{[1]}(s_{01},\cdot)}=\emptyset,$ for $dim(\mathcal{M}_{p,c'}^{H_{[1]}(s_{01},\cdot)})=indp-indc'+(1-1)+1=-2+0+1=-1$.  } %=indp-indc+freedom+1=-2+0+1=-1, $freedom=\ell-1(codimension 的1)$
%\textcolor{blue}{\indent Utilizing the following figure [\ref{presid}] to give a intuitional explanation why the homotopy obtained by $\mathbf{1}_{(H_{[1]},G_{[1]})}$ equals to $0$.\\
\end{proof}

%%%%%%%%%%%%%%%%%%%%%%%%%%%%%%%%%%%%%%%%%%%%%%%%%%%%%%%%%%%%%%%%%%%%%%%%%%%%%%%%%%%%%%%%%%%%%%%%%%%%%%%%%%%%%%%%%%%%%%%%%%%%%%%%%%%%%%%%%%%%%%%%%%%%%%%%%%%%%%%%%%
\begin{section}
  {Further Directions}
  \label{further direction sec5}
\end{section}
%-------------------------------------------------------- 第五章----------------------------------------------------------

\textbf{Orientation of moduli space of gradient flow lines}

To drop out the $\Z_2$ counting and get the $\Z$ coefficients, the orientation of the moduli space of gradient flow lines with parameters is needed. One anticipates the coherent orientation arguments as developed in \cite{schwarz}  could be adapted to our case.

%.........................................................................
\textbf{Higher dimensional moduli space}

In the current paper, our main concern is the one dimensional compactified moduli space whose boundary structures provide us a higher algebraic structure generalizing the usual Morse-Smale-Witten complex. One may attempt to explore further the boundaries and corners of {\it higher} dimensional compactified moduli space.
% which means we concentrate on the compactified moduli space whose boundary condition requires the higher difference between the  index of $p\in Crf^{\alpha}$ and $q\in Crf^{\beta}$.
Here "higher dimensional'' means the dimension of the moduli spcace $\geqslant 2$.  One can anticipate that the resulting structures on the boundaries and corners of compactified higher dimensional moduli space could reveal even richer algebraic structures. Here we are content with describing some boundaries (corners) of  {\it two} dimensional compactified moduli space, and leave the exploration about the corresponding algebraic structures in the future.

\textbf{$\mathbf{1}$-parameter case}\\
\indent Let $dim\mathcal{M}^{H_{[1]}}_{p,q}=ind p- ind q +1$ be equal to $2$ which implies $indp-indq=1$.
As shown in Figure \ref{a1b1} and Figure \ref{c1}, in the parameter space (a$1$) there are two types of special points which we distinguish by different colors: the two green points are the boundaries of the parameters space; and the red loci are non-generic points (i.e., the corresponding higher metrics are not generic); except  for these special points, the left are generic points. (b$1$) shows all possible types of boundaries (and corners).
%(c$1$) shows the specific boundaries (corners) and the relations.
The left picture in Figure \ref{c1} (c$1$) shows the natural boundaries which contains both boundaries and corners, and the right one displays the non-generic boundaries and corners.
%The boundaries' information can't be revealed all in the parameter space (a$1$), for we can only obtain the dimension one boundaries' information and the corner information is concealed. When we enlarge the dimension of the parameter space, the corner information can be released.\\
\begin{figure}
  \centering
  \includegraphics[scale=0.5]{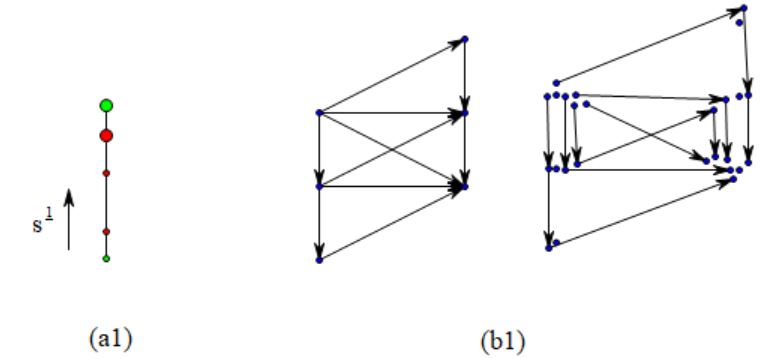}
  \caption{Parameter space (a$1$) and schematic boundary types (b$1$)}
  \label{a1b1}
\end{figure}
\begin{figure}
  \centering
  \includegraphics[scale=0.6]{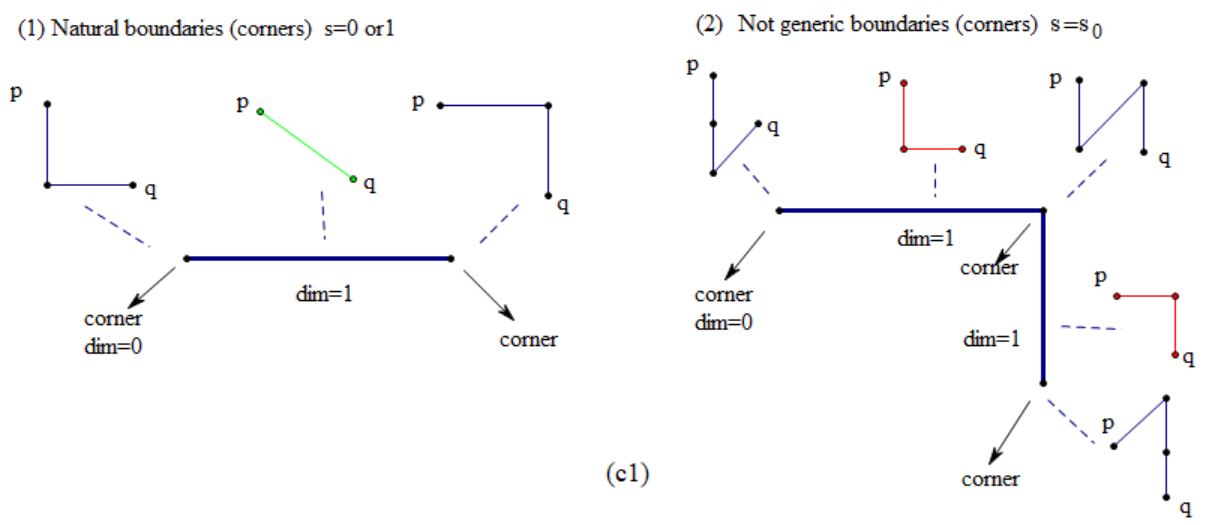}
  \caption{(c$1$) {Different types of the boundary with $indp-indq=1$ and the dimension of $\partial \overline{\mathcal{M}}_{p,q}^{H_{[1]}^{\alpha,\beta}}$ is one}}
  \label{c1}
\end{figure}

\textbf{$\mathbf{2}$-parameter case}\\
\indent Let $dim\mathcal{M}^{H_{[2]}}_{p,q}=ind p- ind q +2 $ be equal to $2$ which implies $indp-indq=0$.
As shown in Figure \ref{a2b2} and Figure \ref{c2}, in the parameter space (a$2$) there are several types of points: $(1)$ the generic points in the interior (blue points); $(2)$ generic points on the boundary (green points); $(3)$ non-generic points on the boundary (purple points); $(4)$ non-generic points of codimension $1$ (orange points); $(5)$ non-generic points of codimension $2$ (red points). (b$2$) reveals schematically all boundaries and corners, while (c$2$) shows some specific boundaries (corners) and their coalescence. One can see the clear correspondence between (a$2$) and (c$2$).
%One thing need to notice is that we should require that in the parameter space, the codimension one non-generic manifolds intersect transversely with each other and intersect the boundary of the parameter space transversely. In fact, this requirement need to be obeyed in all parameters case.\\
\begin{figure}
  \centering
  \includegraphics[scale=0.5]{a2b2.JPG}
  \caption{Parameter space (a$2$) and Schematic boundary types (b$2$)}
\label{a2b2}
\end{figure}
\begin{figure}
  \centering
  \includegraphics[scale=0.6]{c2.JPG}
  \caption{(c$2$) {Different types of the boundary with $indp-indq=0$ and the dimension of $\partial \overline{\mathcal{M}}_{p,q}^{H_{[2]}^{\alpha,\beta}}$ is one}}
\label{c2}\end{figure}

In this case, we are able to write down the boundary of the compactified moduli space:\\
$\partial \overline{\mathcal{M}}_{p,q}^{H_{[2]}}=\bigcup\limits_{(s_{1},s_{2})\in \partial [0,1]^{2}}\mathcal{M}_{p,q}^{H_{[2]}(s_{1},s_{2})}%\\
\cup\bigcup\limits_{(\widehat{s}_{1},\widehat{s}_{2})\in \widehat{S}_{1}\ codimension\ 1\ of\ \partial [0,1]^{2} \atop c\in Cr f^{\alpha}\ indc=indp-1}\mathcal{M}_{p,c}^{f^{\alpha}}\times \mathcal{M}_{c,q}^{H_{[2]}(\widehat{s}_{1},\widehat{s}_{2})}\\
\cup\bigcup\limits_{(\widehat{s}_{1},\widehat{s}_{2})\in \widehat{S}'_{1}\ codimension\ 1\ of\ \partial [0,1]^{2} \atop c'\in Cr f^{\beta}\ indc'=indq+1}\mathcal{M}_{p,c'}^{H_{[2]}(\widehat{s}_{1},\widehat{s}_{2})}\times \mathcal{M}_{c',q}^{f^{\beta}}%\\
\cup\bigcup\limits_{(\widehat{s}_{1},\widehat{s}_{2})\in \widehat{S}_{2}\ codimension\ 1\ of\ (0,1)^{2} \atop c\in Cr f^{\alpha}\ indc=indp-1}\mathcal{M}_{p,c}^{f^{\alpha}}\times \mathcal{M}_{c,q}^{H_{[2]}(\widehat{s}_{1},\widehat{s}_{2})}\\
\cup\bigcup\limits_{(\widehat{s}_{1},\widehat{s}_{2})\in \widehat{S}'_{2}\ codimension\ 1\ of\ (0,1)^{2} \atop c'\in Cr f^{\beta}\ indc'=indq+1}\mathcal{M}_{p,c'}^{H_{[2]}(\widehat{s}_{1},\widehat{s}_{2})}\times \mathcal{M}_{c',q}^{f^{\beta}}%\\
\cup\bigcup\limits_{(\widehat{s}_{1},\widehat{s}_{2})\in \widehat{S}_{3}\ codimension\ 2\ of\ (0,1)^{2} \atop c_{1},c_{2}\in Cr f^{\alpha}\ indc_{1}=indp-1,\ indc_{2}=indc_{1}-1}\mathcal{M}_{p,c_{1}}^{f^{\alpha}}\times \mathcal{M}_{c_{1},c_{2}}^{f^{\alpha}}\times \mathcal{M}_{c_{2},q}^{H_{[2]}(\widehat{s}_{1},\widehat{s}_{2})}\\
\cup\bigcup\limits_{(\widehat{s}_{1},\widehat{s}_{2})\in \widehat{S}'_{3}\ codimension\ 2\ of\ (0,1)^{2} \atop c_{1}\in Cr f^{\alpha},\ c_{2}\in Cr f^{\beta}\ indc_{1}=indp-1,\ indc_{2}=indq+1}\mathcal{M}_{p,c_{1}}^{f^{\alpha}}\times \mathcal{M}_{c_{1},c_{2}}^{H_{[2]}(\widehat{s}_{1},\widehat{s}_{2})}\times \mathcal{M}_{c_{2},q}^{f^{\beta}}%\\
\cup\bigcup\limits_{(\widehat{s}_{1},\widehat{s}_{2})\in \widehat{S}'_{3}\ codimension\ 2\ of\ (0,1)^{2} \atop c_{1},\ c_{2}\in Cr f^{\beta}\ indc_{1}=indq+2,\ indc_{2}=indq+1}\mathcal{M}_{p,c_{1}}^{H_{[2]}(\widehat{s}_{1},\widehat{s}_{2})}\times \mathcal{M}_{c_{1},c_{2}}^{f^{\beta}}\times \mathcal{M}_{c_{2},q}^{f^{\beta}}$.\\

\textbf{$\mathbf{3}$-parameter case}\\
\indent Let $dim\mathcal{M}^{H_{[3]}}_{p,q}=ind p- ind q +3 $ be equal to two, which implies $indp-indq=-1$.
As shown in Figure \ref{a3b3} and Figure \ref{c3}, in the parameter space (a$3$), there are several types of points: $(1)$ the generic points in the interior (blue points); $(2)$ generic points on the boundary (green points); $(3)$ non-generic points on the boundary (purple points); $(4)$ non-generic points of codimension $1$ (orange points); $(5)$ non-generic points of codimension $2$ (red points) and they are the intersections of two $2$-dimensional submanifolds; $(6)$ non-generic points of codimension $3$ (pink points) and they are the intersections of three $2$-dimensional submanifolds.
% In the picture, the big points are some examples of different types.
(b$3$) reveals all boundaries and corners, while (c$3$) shows the specific boundaries (corners) and the interrelations.
%As we can see, all types boundaries and corners can be exposed in the parameter space.
Note that there is no corresponding boundaries in (c$3$) to the pink point in the parameter space, which means that the parameter space contains more information that will never show up in algebraic structures.
%Still, we require the transversality condition as above.
\begin{figure}
  \centering
  \includegraphics[scale=0.5]{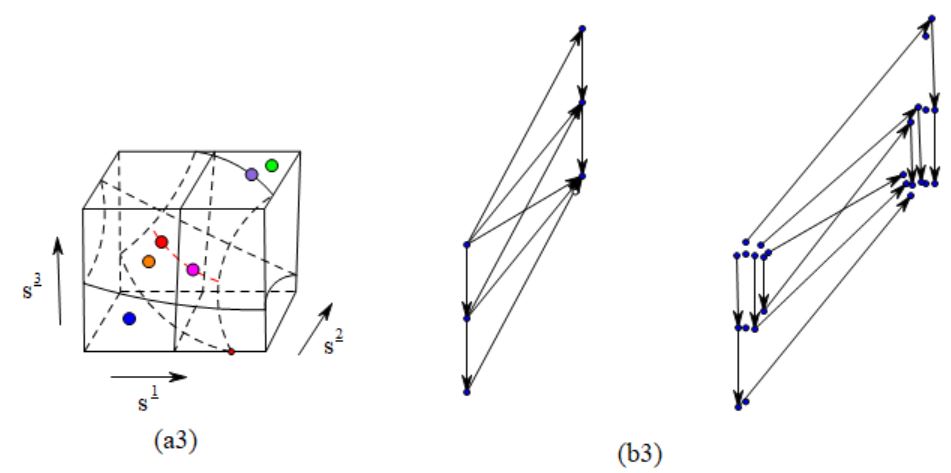}
  \caption{Parameter space (a$3$) and Scematic boundary types (b$3$)}
  \label{a3b3}
\end{figure}
\begin{figure}
  \centering
  \includegraphics[scale=0.5]{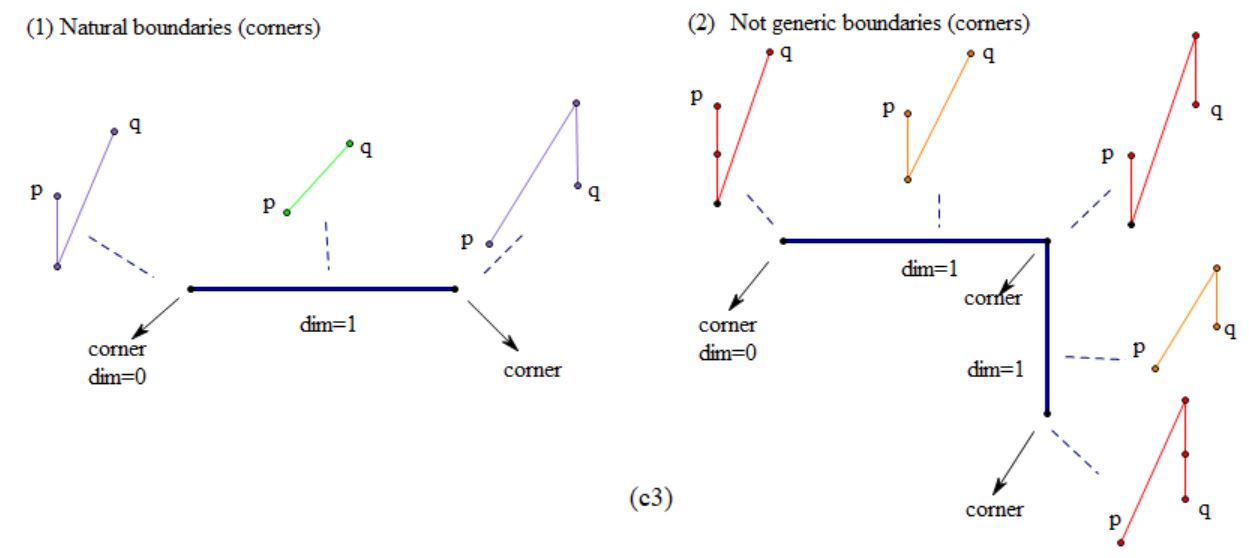}
  \caption{(c$3$) {Different types of the boundary with $indp-indq=-1$ and the $1$-dimensional boundary $\partial \overline{\mathcal{M}}_{p,q}^{H_{[3]}^{\alpha,\beta}}$}}
  \label{c3}
\end{figure}

{In fact, we can stop where the number of parameters is equal to the dimension of the moduli spaces. In other words, the space of parameters $(s^{\underline{\ell}},\cdots,s^{\underline{1}})$ has redundant information if $\ell$ is equal or greater than the dimension of the moduli space.}

%..........................................................
\textbf{Generalization of the functor}

The construction of $\mathcal{A}$ and $\mathcal{B}$ in Section \ref{Weak infty category sect4} has a nice category structure, and the functor constructed via the boundary of the compactified moduli space between them gives us algebraic structure from geometry. All these constructions are based on the {\it fixed} MS pairs $(f^{\alpha},g^{\alpha})$, $(f^{\beta}, g^{\beta})$ and the {\it fixed} MSW complexes $C(f^{\alpha})$, $C(f^{\beta})$. One may wondering what kind of algebraic structures we could expect if we {\it vary} the MS pairs and regard them as some $0$-cells in a new category $\widetilde{\mathcal{A}}$. Accordingly, all MSW complexes should be taken into considerations, and form the objects in another new category $\widetilde{\mathcal{B}}$. We consider this as an interesting problem and still want to apply ideas used in the current paper.  Nevertheless there will be further issue to be addressed when we glue cells along a $0$-cell and extra data seems to come into play. This is the partial reason that we consider the categories $\mathcal{A}$ and $\mathcal{B}$. The role of codimension one sets as defects truly shows up.
%However, considering $\widetilde{\mathcal{A}}$, $\widetilde{\mathcal{B}}$ and their correspondence is also very interesting.
%\textcolor{red}{Roughly speaking, the problem is to define how two cells glue along a zero cell. Therefore, we add more geometric structure from $\widetilde{\mathcal{A}}$ (In fact, the codimension one sets play an significant role) and functor to solve this issue.}

{Here is an example to be pursued in the future.}

\indent Let $\phi_{[1]}=S:C\longrightarrow C'[-1]$ be a chain homotopy from $\phi_{0}$ to $\phi_{1}$, i.e., $\phi_{1}-\phi_{0}= S\circ \partial+\partial'\circ S$, and $\phi'_{[1]}=S':C'\longrightarrow C''[-1]$ be another chain homotopy from $\phi_{0}'$ to $\phi_{1}'$, that is, $\phi_{1}'-\phi_{0}'= S'\circ \partial'+\partial''\circ S'$, then $S'\circ S=S'\circ_{0} S:C\longrightarrow C''[-1]$ should also be a chain homotopy, i.e., a $2$-cell.
{When the codimension one set $\widehat{S}$ (resp. $\widehat{S}'$) has a unique point, the result coincides with the algebraic computations, namely the resulting homotopy has two possibilities: $\phi_{1}'\circ S+S'\circ \phi_{0}$ or $S'\circ \phi_{1}+\phi_{0}'\circ S$, just as Lienster (\cite{leinster} , p. 13) has shown.} When the codimension one set $\widehat{S}$ (resp. $\widehat{S}'$) has more than one components, the above description is not enough.
%We then have the following general situation. \\
%\indent Geometrically, we found the above method of computing homotopy is just a special case.
We take Figure \ref{glue0celltgeo} as an example to illustrate the general situation, and here the codimension one sets $\widehat{S}=\{\widehat{s}_{1},\widehat{s}_{2},\widehat{s}_{3},\widehat{s}_{4} \}$ and $\widehat{S}'=\{\widehat{s}'_{1},\widehat{s}'_{2}\}$.
\begin{figure}
\centering
\def\svgscale{0.6}
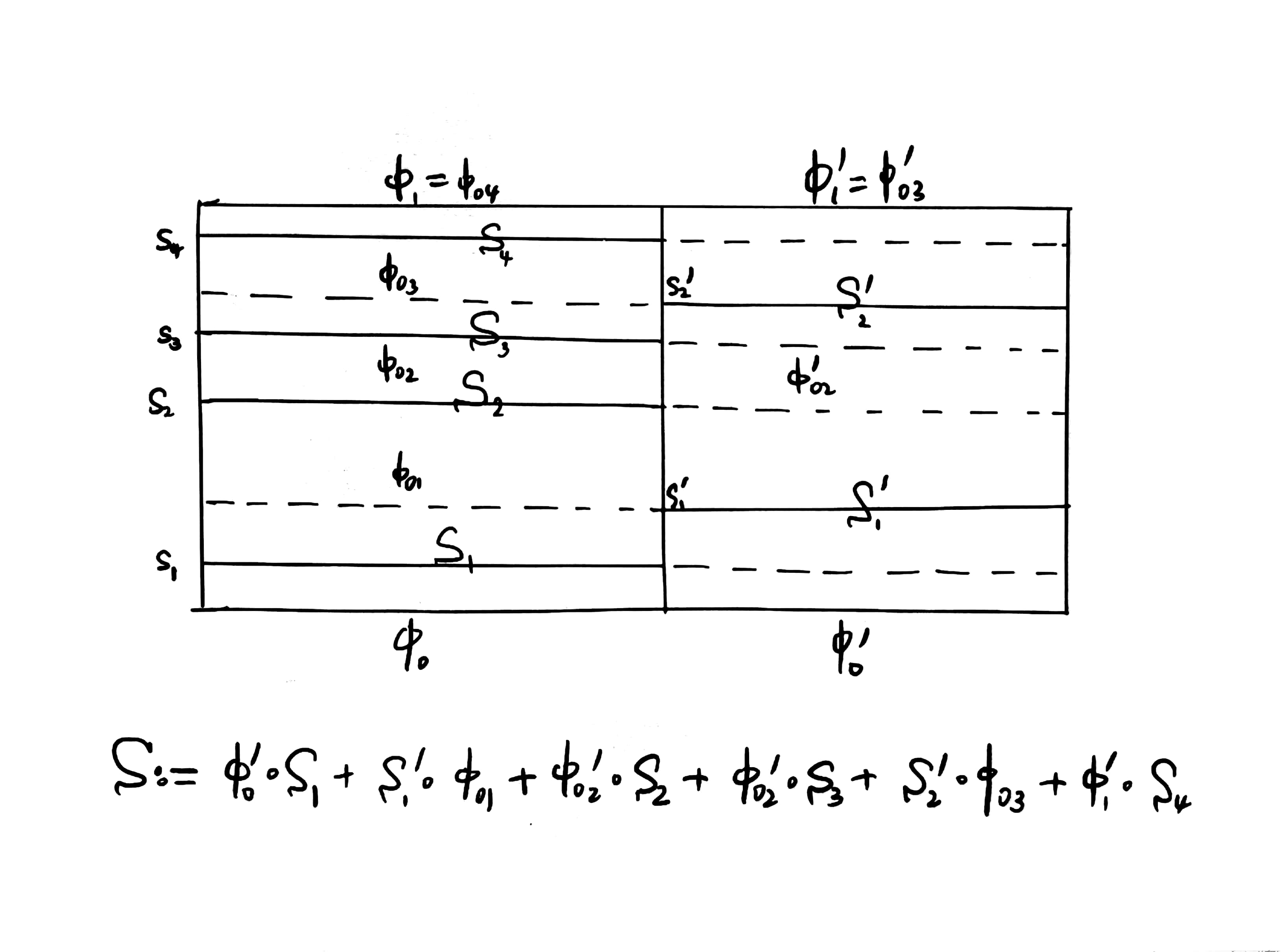
\caption{One possibility for the gluing in the presence of more codimension one sets}
\label{glue0celltgeo}
\end{figure}
One can easily see that the relative positions of the components in the codimension one sets $\widehat{S}$ and $\widehat{S}'$ are necessary data to define the homotopy from $\phi_{0}'\circ \phi_{0}$ to $\phi_{1}'\circ \phi_{1}$.
{When we try to work out the higher homotopies,  it becomes rather complicated since the codimension one set will be submanifolds instead of finitely many points. We will continue the projects in the sequel.}

{\bf Acknowledgements}

Our research is partially supported by National Key R\&D Program
of China (2020YFA0713300), NSFC (No.s 11771303, 12171327, 11911530092, 11871045). {Both authors are grateful to professor Urs FRAUENFELDER for valuable discussions and suggestions.} The second named author would like to thank the valuable discussions with Yong LI and Zhengyu ZONG.

%==============================================================================

\end{document}